\documentclass[final]{siamart190516}

\usepackage{amssymb,amsmath,amsfonts,mathtools,enumerate,color}

\usepackage[toc,page,title,titletoc,header]{appendix}
\usepackage{graphicx,subfig}

\usepackage{paralist}

\usepackage{booktabs}

\usepackage{tikz}
\usetikzlibrary{arrows,positioning,shapes.geometric}

\newtheorem{Theorem}{Theorem}[section]
\newtheorem{Lemma}[Theorem]{Lemma}
\newtheorem{Proposition}[Theorem]{Proposition}
\newtheorem{Assumption}{H.\!\!}

\theoremstyle{definition}
\newtheorem{Definition}{Definition}[section]

\newtheorem{Example}{Example}[section]

\theoremstyle{remark}
\newtheorem{Remark}{Remark}[section]

  \def\nb{\nonumber}
\def \Vh0{\stackrel{\circ}{V}_h} \def\to{\rightarrow}
   
\def\Om{\Omega}  \def\om{\omega} \def\I{ {\rm (I) } }
 
\newcommand{\q}{\quad}

\def\l{\label}  \def\f{\frac}  \def\fa{\forall}
\def\b{\beta}  \def\a{\alpha} 
 
\def\eps{\varepsilon}

 \def\t{\times}

\def\u{{\bf u}}

\def\cA{\mathcal{A}}
\def\cB{\mathcal{B}}

\def\cF{\mathcal{F}}

\def\cI{\mathcal{I}}

\def\cK{\mathcal{K}}
\def\cL{\mathcal{L}}
\def\cM{\mathcal{M}}

\def\cO{\mathcal{O}}

\def\cS{\mathcal{S}}

\def\cZ{\mathcal{Z}}

\def\bA{{\textbf{A}}}

\def\I{\mathbb{I}}
\def\N{{\mathbb{N}}}
\def\bP{\mathbb{P}}

\def\R{{\mathbb R}}
\def\bS{\mathbb{S}}
\def\Z{{\mathbb{Z}}}

\newcommand{\ex}{\mathbb{E}}

\DeclareMathOperator*{\argmax}{arg\,max}
\DeclareMathOperator*{\argmin}{arg\,min}

\def\bb{\begin{equation}} \def\ee{\end{equation}}
\def\bn{\begin{enumerate}} \def\en{\end{enumerate}}


\begin{document}

\title{Error estimates of penalty schemes for quasi-variational inequalities arising from  impulse control problems}
\author{
Christoph Reisinger\thanks{Mathematical Institute, University of Oxford,  Oxford OX2 6GG, UK (\email{christoph.reisinger@maths.ox.ac.uk}, \email{yufei.zhang@maths.ox.ac.uk}).}
\and
Yufei Zhang\footnotemark[1]
}

\maketitle

\begin{abstract}
This paper proposes penalty schemes for a class of weakly coupled systems of Hamilton-Jacobi-Bellman quasi-variational inequalities (HJBQVIs) arising  from stochastic hybrid control problems of regime-switching models with both continuous and impulse controls. We show that the solutions of the penalized equations converge monotonically to those of the HJBQVIs. We  further
establish that the schemes are half-order accurate for HJBQVIs with Lipschitz  coefficients, and  first-order accurate for equations with more regular coefficients. Moreover, we   construct   the  action regions and optimal impulse controls based on the error estimates and the penalized solutions. 
The penalty schemes and convergence results are then extended to HJBQVIs with possibly negative impulse costs. 
We also demonstrate the convergence of monotone discretizations of the penalized equations, and establish that policy iteration applied to the discrete equation is monotonically convergent with an arbitrary initial guess in an infinite dimensional setting. 
Numerical examples for infinite-horizon optimal switching problems are presented  to illustrate  the effectiveness of the penalty schemes over the conventional direct control scheme.

\end{abstract}

\begin{keywords}
Hybrid control, HJB quasi-variational inequality, monotone system, regime switching, penalty method,  error estimate.
\end{keywords}

\begin{AMS}
	34A38, 65M12, 65K15
\end{AMS}

\section{Introduction}\l{sec:intro}

In this paper we study penalty schemes and their convergence  for the following weakly coupled system of degenerate Hamilton-Jacobi-Bellman quasi-variational inequalities (HJBQVIs): for all $i\in\cI:=\{1,\dots,M\}$,
\begin{align}
\l{eq:general}
\max\Big\{
\sup_{\alpha\in\cA_i}\mathcal{L}^{\alpha}_i(x,u(x),Du_i(x),D^2u_i(x)), \    
(u_i-\mathcal{M}_i u)(x)\Big\}=0,   \q x\in \R^d, 
\end{align}
where  $u=(u_i)_{i\in\cI}$ denotes the unknown solution, $(\cL^\a_i)_{i\in\cI}$ is a family of second order differential operators,
and $\cM_i$ is an intervention operator of the following form:
\bb
\label{eq:M_impulse_intro}
(\mathcal{M}_i u)(x) = \min_{z \in Z_i(x) } \{ u_i(\Gamma_i(x,z)) + K_i(x,z) \}.
\ee

The above system extends the classical scalar HJBQVIs, and arises naturally from hybrid control problems of regime-switching models with both  continuous and impulse controls (see e.g.~\cite{bensoussan1997,wei2010,yin2010,sotomayor2013,tan2018}).
For instance, let $\a$ be a c\`{a}dl\`{a}g adapted stochastic control process, 
and let $\gamma=(\tau_1,\xi_1;\tau_2,\xi_2;\ldots)$ be an impulse control strategy consisting of a sequence of impulse times $0=\tau_0\le \tau_1\le \tau_2\le \ldots,$ and  adapted impluse controls $(\xi_1,\xi_2,\ldots)$. Between impulse times, we assume the state process  $X$ follows a controlled regime-switching process defined as follows: $X_0=x\in \R^d$, $I_0=i\in \cI$, and for all $k\in \N\cup\{0\}$,
$$
dX_t=b(\a_t, I_t,X_t) \, dt+\sigma(\a_t, I_t,X_t) \, dW_t, \q \tau_k<t<\tau_{k+1},
$$
where $W$ is a standard Brownian motion, and $I$ is a continuous-time Markov chain with values in the finite set $\cI$,  
{
which represents the uncertainty in the environment and randomly switches among $M=|\mathcal{I}|$ states, governed by a controlled Markov transition matrix $(d_{ij}^{\a_t}(X_t))_{i,j \in \mathcal{I}}$.}
At an impulse time $\tau_k$, the impulse control $\xi_k$ is applied and instantaneously changes the state  into
$X_{\tau_{k}}=\Gamma(I_{\tau_{k}^-},X_{\tau_{k}^-},\xi_{k})$. 
The aim is to minimize the expected cost over all admissible strategies $(\a,\gamma)$ by considering the following value function:
\bb\l{eq:u_i}
u_i(x)\coloneqq \inf_{\a,\gamma}\ex\bigg[\int_0^\infty \ell(\a_t,I_t,X_t)e^{-c(I_t,X_t)t}\,dt+\sum_{k=1}^\infty K(I_{\tau_{k}^-},X_{\tau_{k}^-},\xi_{k}) e^{-c(I_{\tau_{k}^-},X_{\tau_{k}^-})\tau_k}\bigg]
\ee
for each $x\in \R^d$ and $i\in\cI$, where $\ell$ and $K$ are the running cost and the impulse cost, respectively.

Such hybrid control problems appear in mathematical finance, such as in the following applications: portfolio optimization with transaction costs \cite{korn1999,oksendal2005,azimzadeh2016weakly},  control of exchange rates \cite{korn1999,oksendal2005,davis2010,azimzadeh2016weakly}, credit securitization \cite{seydel2009}, inventory control and  dividend control \cite{korn1999,bai2010}.
It is well-known that  under suitable assumptions, the value functions $(u_i)_{i\in \cI}$ in \eqref{eq:u_i} can be characterized by the viscosity solution to \eqref{eq:general} (see e.g. \cite{wei2010,tan2018}).
{Note that due to the random switching process $I$,  each operator $\cL^\a_i$  involves all components of the solution $u$, which leads us to a weakly coupled system of HJBQVIs (see e.g.~\cite{yin2010,briani2012}).}

As the solution to \eqref{eq:general} is in general not known analytically, several classes of numerical schemes have been proposed to solve such nonlinear equations. By writing the obstacle term $u_i-\cM_i u$ as $\max_{z\in Z_i(x)} [u_i-u_i(\Gamma_i(x,z)) - K_i(x,z)]$, one can extend the ``direct control" scheme of HJB equations to solve \eqref{eq:general}, which discretizes the operators in \eqref{eq:general} and attempts to solve the resulting nonlinear discrete equations using policy iteration \cite{chancelier2007,azimzadeh2016weakly}. However, due to the non-strict monotonicity of the term $u_i-\cM_i u$, such a scheme in general requires a very accurate initial guess for the policy iteration to converge. In fact, as we shall show in Remark \ref{rmk:direct}, even for some simple intervention operators, 
policy iteration in the direct control scheme may  not be well-defined for an arbitrary initial guess due to the possible singularity of the matrix iterates. 

An alternative approach to solving  \eqref{eq:general}, referred to as  iterated optimal stopping,  approximates the QVI by  a sequence of HJB variational-inequalities (see \eqref{eq:iter_0}--\eqref{eq:iter_n}),
which can subsequently be solved by the direct control scheme \cite{oksendal2005,seydel2009}. However, since  this approach  can be equivalently formulated as a fixed point algorithm for the QVI, one can show that this approach in general suffers from slow convergence (i.e.\ rate close to 1), especially for small impulse costs \cite{reisinger2018qvi}.

In this work, we shall extend the penalty schemes in \cite{kharroubi2010,azimazadeh2018} for scalar equations (i.e.~$M=1$) to systems of HJBQVIs, and construct the solution of \eqref{eq:general} from a sequence of penalized equations.
  The major advantage of the penalty approximation is that one can easily construct convergent monotone discretizations of the penalized equation with a fixed penalty parameter, and policy iteration applied to the discrete equation is monotonically convergent with any initial guess (see Section \ref{sec:discrete}). Moreover, the Lagrange multipliers of the penalized equations enjoy better regularity than those of the unpenalized QVI \eqref{eq:general}. 
{It is observed empirically that this improved regularity leads to   mesh-independent behaviour of policy iteration for solving the penalized equations, i.e., the number of iterations for solving the discrete problem remains bounded as the mesh size tends to zero, in contrast to the direct scheme (see Figure \ref{fig:efficiency} in Section \ref{sec:num}, see also \cite{reisinger2012,hintermuller2007}).}
  
  All these  appealing features  motivate us to design  efficient penalty schemes for solving systems of HJBQVIs with general intervention operators. We further establish that as the penalty parameter $\rho$ tends to infinity, the solution of the penalized equation converges monotonically from above to the solution of the HJBQVI.
We shall also construct novel convergent approximations of the  action regions and optimal impulse control strategies based on the penalized solutions.

Another major contribution of this work is the convergence rate of such penalty approximations for degenerate HJBQVIs, which is novel even in the scalar case (i.e.~$M=1$). Although the convergence of penalty schemes for QVIs has been proved in various works (e.g.~\cite{lions1982,kharroubi2010,azimazadeh2018}), to the best of our  knowledge, there is no published work on  the accuracy of the penalty approximation with a given penalty parameter (except for, \cite{reisinger2018qvi} where the penalty error for discrete QVIs has been analyzed). This is not only   important for  the choice of penalty parameters and 
the practical implementation of penalty schemes, but is also  crucial for the construction of action regions  (see Remark \ref{rmk:region}) and optimal impulse control strategies. In this work, we shall close the gap by giving a rigorous analysis of the penalty errors.

Let us briefly comment on the two main difficulties encountered in deriving the error estimates.  
In contrast to the results for  finite-dimensional (discretized) QVIs in \cite{reisinger2018qvi}, the convergence rate of penalty approximations for HJBQVIs depends on the regularity of the solution.
Since in this work we focus on degenerate HJBQVIs, including the fully degenerate case where $\cL^\a_i$ reduces to a first-order differential operator, 
the solution of \eqref{eq:general} is typically not differentiable due to the lack of regularization from the Laplacian  operator. Therefore, we need to obtain suitable regularity of the solution to weakly coupled systems based on  viscosity solution theory \cite{crandall1992}.

Moreover, the non-diagonal dominance of the  obstacle term $u_i-\cM_i u$ poses a significant challenge for estimating the penalization errors. In fact, a crucial step in estimating the penalty error for HJB variational-inequalities is to show that there exists a constant $C$, depending  on the regularity of the obstacle, such that for any $\rho>0$, if $u^\rho$ solves the penalized equation with the  parameter $\rho$, then $u^\rho-C/\rho$ satisfies the constraint of the variational inequality (see e.g.~\cite{jakobsen2006,witte2012}). However, this is in general false for the QVIs since the term $u_i-\cM_i u$  remains invariant under any vertical shift of the solutions.  

We shall overcome the above difficulty by {combining the ideas of \cite{bonnans2007,jakobsen2006} and precise regularity estimates (i.e., Lipschitz continuity and semiconcavity) of solutions to HJB variational inequalities.}
In particular, we shall construct a family of auxiliary approximations for our error analysis via  iterated optimal stopping. 
This reduces the problem to estimating the solution regularity and  penalty errors for a sequence of obstacle problems.
We shall derive a  more precise estimate for  the semiconcavity constant of the solution to HJB variational inequalities with respect to the obstacle term than those in prior works (see the discussion above Proposition \ref{prop:semiconcave_ob}). This is crucial for us to be able to conclude that  the penalty approximation is  half-order accurate for HJBQVIs with Lipschitz coefficients, and first-order accurate for equations with more regular coefficients  (see Theorems \ref{thm:error_impulse} and \ref{thm:error_switching}). 
These convergence rates of penalty schemes for HJBQVIs are optimal in the sense that they are of the same order (up to  logarithmic terms) as those for conventional HJB variational inequalities.

We further extend the penalty scheme and its error estimate to a class of HJBQVIs with possibly \textit{negative impulse costs}.
Note that signed costs are not only of mathematical interest, but are also important to model the situation where the controller can obtain a positive impulse benefit, for example, receive financial support for investing  in renewable  energy production (see \cite{pham2009,lundstrom2014}).
In this setting, we deduce error estimates for a different type of penalty schemes, which apply the penalty to each  impulse control strategy, instead of  the pointwise maximum over all impulse control strategies (Remark \ref{rmk:pointwise}). These convergence results rely on  a novel construction of a strict subsolution to HJBQVIs with  general switching costs, for which we  
impose less restrictive conditions on the switching costs than those given in the literature (see the discussion after (H.\ref{assum:sc}) for details).

Finally, we  would like to point out a control-theoretic interpretation of our penalty schemes. As observed in \cite{liang2015,liang2016}, the viscosity solution of the penalized equation with parameter $\rho$ can be identified as the value function of a  hybrid control problem where the controller is only allowed to
perform impulse controls at a sequence of Poisson arrival times with intensity $\rho$, instead of any stopping times.
Our error estimates give a convergence rate   of these  hybrid control problems with random intervention times in terms of  the intensity $\rho$, which is of independent interest.

We organize this paper as follows. Section \ref{sec:qvi_positive} states the main assumptions and recalls basic results for the system of  HJBQVIs with positive impulse costs. In Section \ref{sec:penalty} we shall propose a penalty approximation to the HJBQVIs  and establish its monotone convergence. Then by exploiting the regularization  introduced in Section \ref{sec:regularize}, we  estimate the convergence rates of the penalty schemes in Section \ref{sec:rate}, and construct convergent approximations to action regions and optimal impulse controls in Section \ref{sec:region_ctrl}.
We extend the convergence results to HJBQVIs with signed costs in Section \ref{sec:switching}, and discuss the monotone convergence of policy iteration in Section \ref{sec:discrete}. 
Numerical examples for infinite-horizon optimal switching problems are presented in Section \ref{sec:num} to illustrate  the effectiveness of the penalty schemes.
Appendix \ref{appendix} is devoted to the proofs of some technical results.

\section{HJBQVIs with positive costs}\l{sec:qvi_positive}

In this section, we introduce the system of HJBQVIs of our interest, state the main assumptions on its coefficients, and recall the appropriate notion of solutions. We start with some useful notation which is needed frequently throughout this work.

For a function $\phi:\R^d\to \R$, we define the following (semi-)norms:
$$
|\phi|_0=\sup_{x\in \R^d}|\phi(x)|,\q [\phi]_1=\sup_{x,y\in \R^d}\f{|\phi(x)-\phi(y)|}{|x-y|}, \q |\phi|_1=|\phi|_0+[\phi]_1.
$$
As usual, we denote by $C^{0}(\R^d)$ (resp.~$C^n(\R^d)$) the space of bounded continuous functions (resp.~$n$-times differentiable functions) in $\R^d$, and by 
${C}^{0}_{1}(\R^d)$  the subset of functions in $C^0(\R^d)$ with finite $|\cdot|_1$ norm. Finally, we shall  denote by $\bS^d$ the set of $d\t d$ symmetric matrices, and by $X\ge Y$ in $\bS^d$ the fact that $X-Y$ is positive semi-definite. 

We shall consider the following weakly coupled system: for each $i\in\cI:=\{1,\dots,M\}$, and $x\in \R^d$, 
\begin{align}
\begin{split}
\l{eq:qvi_impulse}
&F_i(x,u,Du_i,D^2u_i)\\
&
\coloneqq \max\Big\{
\sup_{\alpha\in\cA_i}\mathcal{L}^{\alpha}_i(x,u(x),Du_i(x),D^2u_i(x)), \    
(u_i-\mathcal{M}_i u)(x)\Big\}=0,  
\end{split}
\end{align}
where $u=(u_i)_{i\in\cI}$, $\cL^\a_i: \R^d\t \R^M\t \R^d\t \bS^d\to \R$ is the following linear operator:
\bb
\label{eq:defLi}
\mathcal{L}^{\alpha}_i(x,s,p,X)=-\mathrm{tr}[a^{\alpha}_i(x) X] -
b^{\alpha}_i(x) p +c^\a_i(x)s_i-\ell_i^\a (x)-\sum_{j\in \cI^{-i}}d_{ij}^\a(x)s_j,
\ee
with $\cI^{-i}\coloneqq \{j\in \cI\mid j\not=i\}$, $\cM_i$ is the intervention operator \eqref{eq:M_impulse_intro}, i.e.,
\bb
\label{eq:M_impulse}
(\mathcal{M}_i u)(x) = \min_{z \in Z_i(x) } \{ u_i(\Gamma_i(x,z)) + K_i(x,z) \}.
\ee

Before introducing the assumptions on the coefficients, let us recall the concept of semiconcavity of a continuous function \cite{crandall1992,bardi1997}, which is crucial for  the subsequent convergence analysis. 
\begin{Definition}[Semiconcavity] \label{def:semiconcave}
A continuous function $\phi$
is semiconcave around $x\in \R^d$ with constant $C\ge 0$, if it holds  that
\begin{align}\l{eq:def_semiconcave}
\phi(x+h)-2\phi(x)+\phi(x-h)&\le C|h|^2, \q \textnormal{for all sufficiently small $h\in \R^d$.}
\end{align}
We say a continuous function $\phi$ is semiconcave with  constant $C\ge0$  if \eqref{eq:def_semiconcave} holds for all $x\in \R^d$. For any given semiconcave function $\phi$, we shall  denote by $[\phi]_{2,+}$ its semiconcavity constant, i.e.,
 $$[\phi]_{2,+}\coloneqq \inf\{C\ge 0 \mid u(x+h)-2u(x)+u(x-h)\le C|h|^2,\, x, h\in \R^d\}.$$
\end{Definition}

A concave function is clearly semiconcave. Moreover, a $C^1$ function with locally Lipschitz gradient is semiconcave \cite{bardi1997}.

We now list the main assumptions on the coefficients.

\begin{Assumption}\l{assum:regularity}
For any $i,j\in \cI$, $\cA_i$ is a nonempty compact set,  $a^{\alpha}_i=\f{1}{2}\sigma^{\alpha}_i{\sigma^{\alpha}_i}^T$
for some  $\sigma^{\alpha}_i\in \R^{d\t d'}$, and $\sigma^\a_i$, $b^\a_i$, $\ell^\a_i$, $c^\a_i$, $d^\a_{ij}$ are continuous functions. Moreover,
there exist constants $C$ and $\lambda_0$ such that it holds for any $ j\not =i$, $\a\in \cA_i$ that
\begin{align}
&|\sigma^{\alpha}_i|_1+|b^{\alpha}_i|_1 
+|\ell_i^\a|_1+|c_i^\a|_1+|d_{ij}^\a|_1\leq C, \l{eq:coeff_regularity}\\
&d^\a_{ij}\ge 0, \q c^\a_i-\sum_{j\in \cI^{-i}}d^\a_{ij}\ge \lambda_0>0.\l{eq:coeff_mono}
\end{align}
\end{Assumption}

\begin{Assumption}\l{assum:impulse}
For any $i\in \cI$ and $x\in \R^d$, $Z_i(x)$ is a nonempty compact set in a metric space $(\boldsymbol{Z},d_{\boldsymbol{Z}})$, $\Gamma_i$ and $K_i$ are continuous functions, and the mapping $x\to Z_i(x)$ is continuous in the Hausdorff metric. Moreover, 
there exists a constant $\kappa_0$ such that  for all  $i \in \cI$, $x\in \R^d$ and $z\in Z_i(x)$, we have $K_i(x,z)\ge \kappa_0>0$.
\end{Assumption}

The condition \eqref{eq:coeff_regularity} in (H.\ref{assum:regularity}) is the standard regularity assumption for the coefficients in viscosity solution theory, while  \eqref{eq:coeff_mono} in (H.\ref{assum:regularity}) implies the monotonicity of the HJB equations. 
The condition (H.\ref{assum:impulse}) on the intervention operator is the same as that in \cite{seydel2009,azimazadeh2018}, which ensures the well-posedness of \eqref{eq:qvi_impulse} in the class of bounded continuous functions.
As we shall show in Section \ref{sec:penalty}, they are sufficient for the monotone convergence of the penalty approximation, even for non-convex/non-concave systems involving Isaacs' equations.

The following additional assumptions are necessary to derive  the regularity of the value functions and quantify the error estimates of the penalty schemes.

\begin{Assumption}\l{assum:lambda_lip}
The constant $\lambda_0$ in (H.\ref{assum:regularity}) satisfies $\lambda_0>\sup_{\a,i}([\sigma^{\alpha}_i]^2_1+[b^\a]_1)$.
\end{Assumption}

\begin{Assumption}\l{assum:regularity_concave}
There exists a constant $C> 0$ such that for any $i,j\in \cI$, $\a\in \cA_i$, we have $\sigma^\a_i, b^\a_i, c^\a_i, d^\a_{ij}\in C^1(\R^d)$ satisfying the estimate
$$
|D \sigma^{\alpha}_i|_1+|D b^{\alpha}_i|_1 +|D c_i^\a|_1+|D d_{ij}^\a|_1\leq C,
$$
and $\ell^\a_i$ is semiconcave with  constant $C$ in $\R^d$.
\end{Assumption}

\begin{Assumption}\l{assum:M_lip}
For any $i\in\cI$, the  operator $\cM_i$ preserves Lipschitz functions, i.e., 
there exists a constant $C> 0$ such that for any $u\in C^0_1(\R^d)$, $\cM_i u$ is  Lipschitz continuous with a constant satisfying $[\cM_i u]_1\le [u]_1+C$.
\end{Assumption}

\begin{Assumption}\l{assum:M_concave}
For any $i\in\cI$, the  operator $\cM_i$ preserves semiconcave functions, i.e., 
there exists a constant $C> 0$ such that for any given bounded semiconcave function $u$, $\cM_i u$ is  semiconcave with a constant satisfying $[\cM_i u]_{2,+}\le [u]_{2,+}+C$.
\end{Assumption}

Let us briefly discuss  the importance of the above assumptions. 
The condition (H.\ref{assum:lambda_lip}) is the standard assumption for the Lipschitz continuity of the value functions (see \cite{lions1982}), while 
(H.\ref{assum:regularity_concave}) will be used to establish the semiconcavity of the solutions in Section \ref{sec:regularize}, which is the maximal regularity that one can expect for the solutions of degenerate HJB equations (see e.g.~\cite{bardi1997}).

Conditions  (H.\ref{assum:M_lip}) and (H.\ref{assum:M_concave}) are certain structural assumptions for the intervention operator $\cM_i$, which play an essential role in our error estimates. In general  these conditions need to be verified in a problem dependent way, as demonstrated in the following special cases.
\begin{Example}
For the  commonly studied intervention operator (see e.g.~\cite{K.ishii1993,bonnans2007,davis2010,sotomayor2013,azimzadeh2016weakly,azimzadeh2017zero}):
\bb\l{eq:M_common}
\cM_iu(x)=\inf_{z\in \R^p}[u_i(x+\gamma(z))+K(z)], \q x\in \R^d,
\ee
with $|K(z)|\to \infty$ as $|z|\to \infty$, it is straightforward to show that   (H.\ref{assum:M_lip}) and (H.\ref{assum:M_concave}) hold with $C=0$ (note the growth of $K$ ensures the optimal impulse strategy is attained in a compact set). See Section \ref{sec:switching} for examples with state-dependent impulse costs. 
\end{Example}

%

\begin{Example}
For the intervention operator $\cM u(x)=\inf_{z\in Z(x)} [u(x-z)+K(z)]$, with $x\in \R_+^d\coloneqq (0,\infty)^d$ and $Z(x)=\{z\in \R^d\mid 0\le z_i\le x_i,\, i=1,\ldots,d\}$, which is a concave analogue of the maximum utility operator for  multi-dimensional optimal dividend/inventory problems (e.g.~\cite{bai2010}), one can show that   (H.\ref{assum:M_lip}) (resp.~(H.\ref{assum:M_concave})) holds if $K$ is Lipschitz continuous (resp.~semiconcave).

We shall only discuss (H.\ref{assum:M_concave}), since (H.\ref{assum:M_lip}) can be shown by a similar approach. Let $x\in \R_+^d$ and $\hat{z}\in Z(x)$ such that $\cM u(x)= u(x-\hat{z})+K(\hat{z})$. Define the set $\I_x=\{1\le i\le d\mid \hat{z}_i=x_i\}$ and the constant $h_0=\min(\min_{i\not \in \I_x} (x_i-\hat{z}_i), \min_{ i=1,\ldots, d} x_i)>0$. Then for any given  $h\in \R^d$ such that $|h|_0< h_0$, we can consider the  vector $h^{ \I_x}=(h^{ \I_x}_i)_{i=1}^d$ defined by $h^{ \I_x}_i=h_i$ if $i\in \I_x$ and $0$ otherwise,  which  satisfies the following properties:
$$
0\le \hat{z}_i+h^{ \I_x}_i\le x_i+h_i,\q 0\le \hat{z}_i-h^{ \I_x}_i\le x_i-h_i,\q \fa i\in \cI.
$$
In other words, we have $z^+\coloneqq \hat{z}+h^{ \I_x}\in Z(x+h)$ and $z^-\coloneqq \hat{z}-h^{ \I_x}\in Z(x-h)$.
Therefore one can deduce from the semiconcavity of $u$ and $K$ that  $\cM u$ is semiconcave around $x$:
\begin{align*}
\cM u(x+h)-&2\cM u(x)+\cM u(x-h)\\
\le\,& u(x+h-z^+)+K(z^+)-2(u(x-\hat{z})+K(\hat{z}))+u(x-h-z^-)+K(z^-)\\
\le\,& [u]_{2,+}|h-h^{ \I_x}|^2+[K]_{2,+}|h^{ \I_x}|^2\le ([u]_{2,+}+[K]_{2,+})|h|^2,
\end{align*}
which subsequently leads to the desired estimate $[\cM u]_{2,+}\le [u]_{2,+}+[K]_{2,+}$.
\end{Example}

\begin{Example}
A general intervention operator  \eqref{eq:M_impulse} satisfies (H.\ref{assum:M_lip}) under the following Lipschitz conditions on the data:  there exist constants $C_1,C_2,C_3,C_4\ge 0$ such that $C_2+C_1C_3\le 1$ and 
\begin{align*}
&Z_i(y)\subseteq Z_i(x)+\bar{B}_{C_1|x-y|},\q |\Gamma_i(x,z)-\Gamma_i(y,z')|\le C_2|x-y|+C_3d_{\boldsymbol{Z}}(z,z'),\\
  &|K_i(x,z)-K_i(y,z')|\le C_4(|x-y|+d_{\boldsymbol{Z}}(z,z')), \q\q \fa i\in \cI, x,y\in \R^d, z,z'\in \boldsymbol{Z},
\end{align*}
where for each $r\ge 0$, $\bar{B}_r$ denotes a closed ball of center $0$ and radius $r$ in the metric space $\boldsymbol{Z}$.  In fact, let $i\in \cI$, $u\in C_1^0(\R^d)$, $x,y\in \R^d$, $\hat{z}(y)\in Z(y)$ such that $(\mathcal{M}_i u)(y) =  u(\Gamma_i(y,\hat{z}(y))) + K_i(y,\hat{z}(y))$. Then we can find $z(x)\in Z_i(x)$ such that $d_{\boldsymbol{Z}}({z}(x),\hat{z}(y)) \le C_1|x-y|$, which leads to the following estimate that 
\begin{align*}
&(\mathcal{M}_i u)(x)-(\mathcal{M}_i u)(y)\\
&\le  \big[ u(\Gamma_i(x,{z}(x))) + K_i(x,{z}(x))\big]-\big[u(\Gamma_i(y,\hat{z}(y))) + K_i(y,\hat{z}(y))\big]\\
&\le [u]_1|\Gamma_i(x,{z}(x))-\Gamma_i(y,\hat{z}(y))|+|K_i(x,{z}(x))-K_i(y,\hat{z}(y))|\\
&\le [u]_1\big(C_2|x-y|+C_3d_{\boldsymbol{Z}}(z(x),\hat{z}(y))\big)+C_4(|x-y|+d_{\boldsymbol{Z}}(z(x),\hat{z}(y)))\\
&\le \big([u]_1\big(C_2+C_1C_3)+C_4(1+C_1)\big)|x-y|.
\end{align*}
Hence we can conclude from the assumption $C_2+C_1C_3\le 1$ that $\mathcal{M}_i$ satisfies (H.\ref{assum:M_lip}) 
with $C=C_4(1+C_1)$. A sufficient condition of (H.\ref{assum:M_concave}) for the intervention operator  \eqref{eq:M_impulse} in general involves technical second-order conditions on the set-valued mapping $x\to Z(x)$, which  will not be derived here for the sake of simplicity. 
\end{Example}
\color{black}

%
%
%
Note that we do not require any non-degeneracy condition on the diffusion coefficients, i.e., the coefficient  $a^\a_i$ may vanish at certain points, hence our results apply to the fully degenerate case with $a^\a = 0$, where \eqref{eq:qvi_impulse} reduces to QVIs of first order. 

We now discuss the well-posedness of HJBVI \eqref{eq:qvi_impulse}. Due to the lack of  regularization from a Laplacian operator, the solution of \eqref{eq:qvi_impulse} is typically nonsmooth and we shall understand all equations in this work in the following viscosity sense. 

\begin{Definition}[Viscosity solution] \label{def:sol}
A bounded, upper-semicontinuous (resp. lower-semicontinuous) function $u=(u_i)_{i\in \cI}$
is a viscosity subsolution (resp.~supersolution) to \eqref{eq:qvi_impulse}, if for each $i\in \cI$ and  function $\phi \in C^{2}(\R^d)$, at each local maximum (resp.~minimum) point $x$ of $u_i - \phi$ we have $F_i(x, u(x),  D \phi(x), D^2\phi(x)) \leq 0$ (resp.~$\ge 0$).
A continuous function  is a viscosity solution of \eqref{eq:qvi_impulse} if it is both a subsolution and a supersolution.
\end{Definition}

\begin{Remark}
Definition \ref{def:sol} formulates the notation of viscosity solution with suitable  test functions. It is well-known that one can equivalently define the viscosity solution to \eqref{eq:qvi_impulse} in terms of the superjet and subjet of $u_i$ at $x\in \R^d$, denoted by  $J^{2,+}u_i(x)$ and $J^{2,-}u_i(x)$ respectively, or their closures $\bar{J}^{2,+}u_i(x)$ and $\bar{J}^{2,-}u_i(x)$ (see e.g.~\cite[Proposition~2.3]{ishii1991monotone}).
\end{Remark}

The fact that the impulse cost is strictly positive (see  (H.\ref{assum:impulse})) implies  $-C$ is a strict subsolution to \eqref{eq:qvi_impulse} for a  large enough constant $C>0$.
Therefore, one can  establish  a  comparison principle of \eqref{eq:qvi_impulse} by using similar arguments as in \cite{K.ishii1993} (cf.~the proof of Proposition \ref{prop:penalty_comparison}; see also \cite[Theorem 2.5.11]{seydel2009} for a related result for solutions of polynomial growth). The comparison principle directly leads to the uniqueness of bounded viscosity solutions to \eqref{eq:qvi_impulse}, which can be explicitly constructed through penalty approximations (Theorem \ref{thm:conv_penalty}).

\begin{Proposition}\l{prop:impulse_comparison}
Suppose (H.\ref{assum:regularity}) and (H.\ref{assum:impulse}) hold. If $u$ (resp.~$v$) is a bounded subsolution (resp.~supersolution) of \eqref{eq:qvi_impulse}, then $u\le v$ in $\R^d$.
\end{Proposition}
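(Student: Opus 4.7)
The plan is to argue by contradiction using the doubling of variables method, after constructing a strict subsolution from $u$ that exploits both the monotonicity condition \eqref{eq:coeff_mono} and the strict positivity $K_i \ge \kappa_0$ from (H.\ref{assum:impulse}). Suppose for contradiction that $M := \sup_{i\in\cI,\,x\in\R^d}(u_i(x) - v_i(x)) > 0$. For $\theta\in(0,1)$ and a constant $C>0$ to be chosen, set $\tilde{u}^{\theta} := (1-\theta)u - \theta C\mathbf{1}$, and test against $\tilde{\phi} := (1-\theta)\phi - \theta C$ whenever $\phi$ is a test function for $u_i$. A direct computation gives
\[
\sup_{\alpha}\cL_i^{\alpha}(x,\tilde{u}^{\theta},D\tilde{\phi},D^2\tilde{\phi}) \le (1-\theta)\sup_{\alpha}\cL_i^{\alpha}(x,u,D\phi,D^2\phi) + \theta|\ell|_0 - \theta C\lambda_0,
\]
using the key bound $c_i^{\alpha} - \sum_{j\ne i} d_{ij}^{\alpha} \ge \lambda_0$; picking $C > (|\ell|_0+1)/\lambda_0$ and invoking the subsolution inequality for $u$ makes the right-hand side $\le -\theta$. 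For the obstacle part, compactness of $Z_i(x)$ yields a minimizer $z_0$ of $\cM_i u(x)$, which used as a competitor in the definition of $\cM_i\tilde{u}^{\theta}(x)$ gives
\[
\tilde{u}^{\theta}_i(x) - \cM_i\tilde{u}^{\theta}(x) \le (1-\theta)(u_i - \cM_i u)(x) - \theta K_i(x,z_0) \le -\theta\kappa_0.
\]
Thus $\tilde{u}^{\theta}$ is a strict viscosity subsolution with slack $\eta(\theta) := \theta\min(1,\kappa_0)$, and since $\tilde{u}^{\theta}\to u$ uniformly in $x$ as $\theta\to 0$, we still have $M^\theta := \sup_{i,x}(\tilde{u}^{\theta}_i - v_i) > 0$ for $\theta$ small.

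Fix such a $\theta$ and an index $i^{*}\in\cI$ almost attaining $M^\theta$, and consider
\[
\Phi(x,y) := \tilde{u}^{\theta}_{i^{*}}(x) - v_{i^{*}}(y) - \frac{|x-y|^2}{2\varepsilon} - \delta(|x|^2+|y|^2),
\]
which, by upper semicontinuity of $\tilde{u}^{\theta}_{i^{*}} - v_{i^{*}}$ and the localizing quadratic, admits a maximizer $(x_\varepsilon, y_\varepsilon)$ with the standard properties $|x_\varepsilon - y_\varepsilon|^2/\varepsilon, \ \delta(|x_\varepsilon|^2 + |y_\varepsilon|^2) \to 0$ and $\Phi(x_\varepsilon,y_\varepsilon) \to M^\theta$ along a joint limit $\varepsilon,\delta\to 0^+$. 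The Crandall--Ishii lemma produces matrices $X_\varepsilon, Y_\varepsilon \in \bS^d$ with $X_\varepsilon \le Y_\varepsilon$ (up to $O(\delta)$) and the appropriate closed sub/superjets for $\tilde{u}^{\theta}_{i^{*}}$ at $x_\varepsilon$ and $v_{i^{*}}$ at $y_\varepsilon$. Applying the strict subsolution inequality at $x_\varepsilon$ together with the supersolution inequality at $y_\varepsilon$, I would split into two cases according to which of the two terms attains the maximum in $F_{i^{*}}$ at $y_\varepsilon$.

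In the \emph{obstacle case} $v_{i^{*}}(y_\varepsilon) \ge \cM_{i^{*}} v(y_\varepsilon)$, subtracting from the strict obstacle inequality for $\tilde{u}^{\theta}$ and bounding $\cM_{i^{*}}\tilde{u}^{\theta}(x_\varepsilon) - \cM_{i^{*}} v(y_\varepsilon) \le M^\theta + o_{\varepsilon,\delta}(1)$ by selecting a competitor $z'\in Z_{i^{*}}(x_\varepsilon)$ close (in the Hausdorff sense from (H.\ref{assum:impulse})) to a minimizer $\hat{z}\in Z_{i^{*}}(y_\varepsilon)$ of $\cM_{i^{*}} v(y_\varepsilon)$, and using continuity of $\Gamma_{i^{*}}, K_{i^{*}}$, yields $\theta\kappa_0 \le o(1)$, a contradiction. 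In the \emph{HJB case}, differencing the HJB inequalities along a near-maximizing $\alpha^{*}\in\cA_{i^{*}}$, invoking Ishii's matrix inequality, the Lipschitz bounds \eqref{eq:coeff_regularity}, and the estimate
\[
c_{i^{*}}^{\alpha^{*}}\bigl[v_{i^{*}}(y_\varepsilon) - \tilde{u}^{\theta}_{i^{*}}(x_\varepsilon)\bigr] + \sum_{j\ne i^{*}} d_{i^{*}j}^{\alpha^{*}}\bigl[\tilde{u}^{\theta}_j(x_\varepsilon) - v_j(y_\varepsilon)\bigr] \le -\lambda_0 M^\theta + o(1),
\]
which uses $\tilde{u}^{\theta}_j - v_j \le M^\theta$ for all $j$ together with $\tilde{u}^{\theta}_{i^{*}}(x_\varepsilon) - v_{i^{*}}(y_\varepsilon)\to M^\theta$ and $c_{i^{*}}^{\alpha^{*}} - \sum_{j\ne i^{*}} d_{i^{*}j}^{\alpha^{*}} \ge \lambda_0$, yields $\lambda_0 M^\theta \le -\theta + o(1)$, again a contradiction. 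Hence $M\le 0$. The principal difficulty is the translation invariance of the obstacle term $u_i - \cM_i u$: neither a pure additive shift nor a pure multiplicative scaling of $u$ produces a strict subsolution on its own, so the combined perturbation $(1-\theta)u - \theta C\mathbf{1}$ is essential---the scaling generates the $-\theta\kappa_0$ gap in the obstacle, while the shift, calibrated through $\lambda_0$, absorbs the $\theta|\ell|_0$ loss in the HJB comparison. A secondary technicality is handling the state-dependent constraint set $Z_i(\cdot)$, which is precisely where Hausdorff continuity in (H.\ref{assum:impulse}) is needed.
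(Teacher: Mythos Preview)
Your plan is correct and follows essentially the same route as the paper: construct a strict subsolution by the convex combination $(1-\theta)u - \theta C$ (the paper writes it as $u_m = (1-\tfrac{1}{m})u + \tfrac{1}{m}(-C)$ and invokes convexity of $F_i$ directly, cf.\ the proof of Proposition~\ref{prop:penalty_comparison}), then double variables and split into the obstacle and HJB cases. One small slip: to get the \emph{upper} bound $\tilde{u}^\theta_i - \cM_i\tilde{u}^\theta \le (1-\theta)(u_i-\cM_iu) - \theta K_i(x,z_0)$ you should take $z_0$ to be a minimizer of $\cM_i\tilde{u}^\theta(x)$ and use it as a competitor in $\cM_i u(x)$, not the other way round (equivalently, invoke the concavity of $\cM_i$ from Lemma~\ref{Lemma:cM}(1)); and in the obstacle comparison case the bound $\cM_{i^*}\tilde{u}^\theta(x_\varepsilon)-\cM_{i^*}v(y_\varepsilon)\le M^\theta+o(1)$ should be read as a $\limsup$ along a convergent subsequence, using upper/lower semicontinuity of $\tilde{u}^\theta$ and $v$ exactly as in Lemma~\ref{Lemma:cM}(3).
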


We end this section by collecting several important properties of the intervention operator $\cM_i$.

\begin{Lemma}\l{Lemma:cM}
For any $i\in \cI$, we have: 
\begin{enumerate}[(1)]
\item $\cM_i$ is concave, i.e., it holds for any locally bounded functions $u, v:\R^d\to \R$ and constant $\lambda\in [0,1]$ that $\cM_i[(1-\lambda)u+\lambda v]\ge (1-\lambda)\cM_iu+\lambda \cM_iv$.

\item $\cM_i$ is monotone, i.e., if $u\ge v$, then $\cM_i u\ge \cM_i v$.
\item Suppose (H.\ref{assum:impulse}) holds, and let $(u^\rho)_{\rho\in \N}$ be a family of uniformly bounded functions on $\R^d$ with the following half-relaxed limits $u^*$ and $u_*$: 
\bb\l{eq:relaxed}
u^*(x)\coloneqq \limsup_{\rho\to \infty, y\to x}u^\rho(y), \q u_*(x)\coloneqq \liminf_{\rho\to \infty, y\to x}u^\rho(y), \q x\in \R^d.
\ee
Then it holds for any given $x\in \R^d$ and sequence $(x^\rho)_{\rho\in \N}$ with $\lim_{\rho\to \infty} x^\rho=x$ that 
\bb
(\cM_i u_* )(x)\le \liminf_{\rho \to \infty} (\cM_i u^\rho)(x^\rho)\le \limsup_{\rho \to \infty} (\cM_i u^\rho)(x^\rho)\le (\cM_i u^*) (x).
\ee
\end{enumerate} 
\end{Lemma}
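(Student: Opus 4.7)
The plan is to handle the three items in order: parts (1) and (2) follow directly from the pointwise structure of $\cM_i$, while part (3) is a genuine stability statement that will be reduced to approximate-minimizer constructions combined with the Hausdorff continuity of $x\mapsto Z_i(x)$ and the compactness of $Z_i(x)$.

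For (1), I would fix $x\in \R^d$ and observe that for each admissible $z\in Z_i(x)$,
$$
\bigl((1-\lambda)u+\lambda v\bigr)_i(\Gamma_i(x,z))+K_i(x,z)
=(1-\lambda)\bigl[u_i(\Gamma_i(x,z))+K_i(x,z)\bigr]+\lambda\bigl[v_i(\Gamma_i(x,z))+K_i(x,z)\bigr],
$$
since $K_i$ decomposes under the convex combination as $(1-\lambda)K_i+\lambda K_i$. Each bracket is bounded below by $(\cM_i u)(x)$ and $(\cM_i v)(x)$ respectively; taking the infimum in $z$ on the left yields concavity. For (2), if $u\ge v$ pointwise then $u_i(\Gamma_i(x,z))+K_i(x,z)\ge v_i(\Gamma_i(x,z))+K_i(x,z)\ge (\cM_i v)(x)$ for every $z\in Z_i(x)$, so infimum in $z$ gives $(\cM_i u)(x)\ge (\cM_i v)(x)$.

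For the upper bound in (3), I would use almost-minimizers for the limit. Given $\eps>0$, choose $z^\eps\in Z_i(x)$ with $u^*_i(\Gamma_i(x,z^\eps))+K_i(x,z^\eps)\le (\cM_i u^*)(x)+\eps$. By the Hausdorff continuity of $Z_i(\cdot)$ there exist $z^{\eps,\rho}\in Z_i(x^\rho)$ with $z^{\eps,\rho}\to z^\eps$; continuity of $\Gamma_i$ and $K_i$ then gives $\Gamma_i(x^\rho,z^{\eps,\rho})\to \Gamma_i(x,z^\eps)$ and $K_i(x^\rho,z^{\eps,\rho})\to K_i(x,z^\eps)$. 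Since $(\cM_i u^\rho)(x^\rho)\le u^\rho_i(\Gamma_i(x^\rho,z^{\eps,\rho}))+K_i(x^\rho,z^{\eps,\rho})$, the definition of $u^*$ in \eqref{eq:relaxed} yields $\limsup_{\rho\to\infty}(\cM_i u^\rho)(x^\rho)\le u^*_i(\Gamma_i(x,z^\eps))+K_i(x,z^\eps)\le(\cM_i u^*)(x)+\eps$, and $\eps\downarrow 0$ gives the desired inequality.

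For the lower bound in (3), I would reverse the roles: for each $\rho$, pick an almost-minimizer $z^\rho\in Z_i(x^\rho)$ with $u^\rho_i(\Gamma_i(x^\rho,z^\rho))+K_i(x^\rho,z^\rho)\le (\cM_i u^\rho)(x^\rho)+1/\rho$. Pass first to a subsequence along which $(\cM_i u^\rho)(x^\rho)$ realises its liminf, then to a further subsequence along which $z^\rho\to \hat z$; this second extraction uses compactness of $Z_i(x)$ and the Hausdorff continuity, which also ensures $\hat z\in Z_i(x)$. Taking liminf in the almost-optimality inequality and applying continuity of $\Gamma_i,K_i$ together with the definition of $u_*$ yields $\liminf_{\rho\to\infty}(\cM_i u^\rho)(x^\rho)\ge u_{*,i}(\Gamma_i(x,\hat z))+K_i(x,\hat z)\ge (\cM_i u_*)(x)$. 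The main technical point I expect to need care with is precisely this double subsequence extraction and the verification that the limit selector $\hat z$ lies in $Z_i(x)$; everything else is a direct consequence of continuity of the data postulated in (H.\ref{assum:impulse}).
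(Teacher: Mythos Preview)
Your proposal is correct and follows essentially the same approach as the paper: parts (1) and (2) are dismissed as immediate from the structure of $\cM_i$, and for part (3) the paper uses exactly your approximate-minimizer construction in both directions, invoking Hausdorff continuity of $Z_i(\cdot)$ to transport selectors and compactness of $Z_i(x)$ to extract a convergent subsequence for the lower bound. The only cosmetic difference is that the paper uses a fixed $\eps$-almost minimizer (and lets $\eps\to 0$ at the end) rather than your $1/\rho$ choice in the lower-bound step.
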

\begin{proof}
Properties (1) and (2) follow directly from the structure of $\cM_i$. Property (3) 
is an analogue   of   \cite[Lemma 12]{azimazadeh2018} to the present concave intervention operator $\cM_i$ and compact set $Z_i$, 
whose proof will be given in Appendix \ref{appendix} for completeness.
\end{proof}

\section{Penalty approximations for HJBQVIs}\l{sec:penalty}
In this section, we  propose a penalty approximation for the system of HJBQVIs \eqref{eq:qvi_impulse},
which is an extension of the ideas used for scalar HJBQVIs in \cite{bensoussan1982,azimazadeh2018}.
We shall also establish the monotone convergence of the penalized solutions in terms of the penalty parameter. 

For any given penalty parameter $\rho\ge 0$, we consider the following  system of HJB equations: for all $i\in \cI$ and $x\in \R^d$, 
\begin{align}\l{eq:penalty}
\begin{split}
&F^\rho_i(x,u^\rho,Du^\rho_i,D^2u^\rho_i)\\
&\coloneqq 
\sup_{\alpha\in\cA_i}\mathcal{L}^{\alpha}_i(x,u^\rho(x),Du^\rho_i(x),D^2u^\rho_i(x))+\rho(u^\rho_i-\cM_i u^\rho)^+(x)=0, 
\end{split}
\end{align}
where the operators $\cL^\a_i$ and $\cM_i$ are defined as in \eqref{eq:defLi} and \eqref{eq:M_impulse}, respectively.

The definitions of viscosity solution, sub- and supersolution for \eqref{eq:penalty} extend naturally from Definition \ref{def:sol}. 
The following result asserts the comparison principle and the well-posedness of \eqref{eq:penalty} for any given penalty parameter.
\begin{Proposition}\l{prop:penalty_comparison}
Suppose (H.\ref{assum:regularity}) and (H.\ref{assum:impulse}) hold, and let $\rho\ge 0$ be a given penalty parameter. If $u^\rho$ (resp.~$v^\rho$) is a bounded subsolution (resp.~supersolution) of \eqref{eq:penalty}, then $u^\rho\le v^\rho$ in $\R^d$. Consequently,  \eqref{eq:penalty} admits a unique   viscosity solution, which is uniformly bounded in $\rho$.
\end{Proposition}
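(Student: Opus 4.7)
The plan is to follow the two-step template (strict sub/supersolution $+$ doubling of variables, then Perron's method), adapted to the weakly coupled penalized system.

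First, I would verify that for a sufficiently large constant $C>0$, the constant functions $\pm C$ are, respectively, strict sub- and supersolutions of \eqref{eq:penalty}. The computation uses only (H.\ref{assum:regularity}) and (H.\ref{assum:impulse}): evaluating $\cL^\a_i$ at a constant $s$ yields $-(c_i^\a-\sum_{j\neq i}d_{ij}^\a)\,s - \ell_i^\a$, which by $\lambda_0>0$ is $\le -C\lambda_0+|\ell|_0<0$ at $s=-C$ and $\ge C\lambda_0-|\ell|_0>0$ at $s=+C$; at the same time, the strict positivity of $K_i$ gives $\cM_i(\pm C)=\pm C+\min_z K_i(\cdot,z)\ge \pm C+\kappa_0$, so $(\pm C-\cM_i(\pm C))^+=0$ and the penalty term vanishes at these constants. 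Hence $-C$ is a strict subsolution with $F_i^\rho\le -\eta<0$, and $+C$ a strict supersolution, with $\eta$ and $C$ independent of $\rho$.

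Next, for the comparison principle, I would exploit the convexity of $F_i^\rho$ in $(s,p,X)$ — a consequence of the linearity of $\cL^\a_i$, the sup over $\a$, the concavity of $\cM_i$ (Lemma \ref{Lemma:cM}(1)), and the convexity of $(\cdot)^+$. Convex-combining $u^\rho$ with the strict subsolution $-C$ gives, for each $\epsilon\in(0,1)$, a strict subsolution $u^{\rho,\epsilon}:=(1-\epsilon)u^\rho+\epsilon(-C)$ satisfying $F_i^\rho(x,u^{\rho,\epsilon},Du_i^{\rho,\epsilon},D^2u_i^{\rho,\epsilon})\le -\epsilon\eta$ in the viscosity sense. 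Suppose for contradiction that $M:=\max_{i\in\cI}\sup_{x\in\R^d}(u_i^\rho-v_i^\rho)(x)>0$; then for $\epsilon$ small enough the analogous maximum for $u^{\rho,\epsilon}$ is still positive. Let $i_0$ be an index achieving this maximum and apply doubling of variables to $\Phi(x,y)=u_{i_0}^{\rho,\epsilon}(x)-v_{i_0}^\rho(y)-|x-y|^2/(2\delta^2)-\beta(|x|^2+|y|^2)$, invoke the Crandall--Ishii lemma at a maximum point $(x_\delta,y_\delta)$, and subtract the sub- and super-solution inequalities. The classical $\cL^\a_{i_0}$-terms are handled exactly as in the HJB setting using (H.\ref{assum:regularity}) and the standard Ishii trick across components of the system, with the strict monotonicity $\lambda_0>0$ dominating the weak coupling $\sum_{j\neq i_0}d_{i_0 j}^\a(u_j^{\rho,\epsilon}-v_j^\rho)$ because we chose $i_0$ to be a maximizer over $i$.

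The main obstacle is the nonlocal penalty term $\rho(\cdot-\cM_{i_0}\cdot)^+$, which couples values at distinct points. The key observation is that $\cM_{i_0}$ is translation invariant ($\cM_{i_0}(w+c)=\cM_{i_0}w+c$) and monotone (Lemma \ref{Lemma:cM}(2)), so the global bound $u^{\rho,\epsilon}-v^\rho\le M+o(1)$ implies
\begin{equation*}
\cM_{i_0}u^{\rho,\epsilon}(x_\delta)-\cM_{i_0}v^\rho(y_\delta)\le M+o_{\delta,\beta}(1),
\end{equation*}
using also Lemma \ref{Lemma:cM}(3) and the compactness/continuity of $Z_{i_0}$ to pass $y_\delta-x_\delta\to 0$ inside the operator. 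Together with $u_{i_0}^{\rho,\epsilon}(x_\delta)-v_{i_0}^\rho(y_\delta)\to M$, this shows the penalty difference $\rho[(a)^+-(b)^+]$ is nonnegative up to $o(1)$ (since $a\ge b-o(1)$ and $t\mapsto t^+$ is nondecreasing), so it cannot offset the strict slack $-\epsilon\eta$ coming from $u^{\rho,\epsilon}$, yielding the desired contradiction. This argument is an adaptation of \cite{K.ishii1993} to the weakly coupled penalized setting.

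Finally, existence and uniqueness follow from Perron's method, with the constant sub/supersolutions $\pm C$ serving as envelopes; the stability result in Lemma \ref{Lemma:cM}(3) ensures that the upper and lower semicontinuous envelopes of the Perron candidate are respectively sub- and supersolution of \eqref{eq:penalty}, and comparison then collapses them to a continuous solution. Since $\pm C$ are valid envelopes for every $\rho\ge 0$, the bound $|u^\rho|_0\le C$ is uniform in $\rho$, as claimed.
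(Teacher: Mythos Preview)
Your approach is correct and matches the paper's proof closely: both exploit the convexity of $F^\rho_i$ to convex-combine the given subsolution with the constant $-C$ into a \emph{strict} subsolution, apply doubling of variables and the Crandall--Ishii lemma, handle the nonlocal penalty term via the monotonicity and translation-invariance of $\cM_i$ (yielding $\cM_i u^{\rho,\epsilon}-\cM_i v^\rho\le M$ at the limit point, cf.\ Lemma~\ref{Lemma:cM}), and obtain existence and the uniform bound via Perron's method with the $\rho$-independent envelopes $\pm C$. The only cosmetic difference is the bookkeeping of the penalty term: the paper argues by a dichotomy (if the penalty difference is $\le -\kappa_0/m_0$ for all small $\eps$, the $\cM_i$-estimate gives the contradiction $M\le M-\kappa_0/(\rho m_0)$; otherwise one is reduced to the classical HJB comparison), whereas you bound the penalty difference uniformly from below by $-o(1)$ and let the strict slack $-\epsilon\eta$ absorb it before invoking the classical estimate---both routes rest on the same key inequality and are equally valid.
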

\begin{proof}
We postpone the proof of the comparison principle to Appendix \ref{appendix}, which adapts the strict subsolution technique in \cite{K.ishii1993} to the penalized equation, and  reduces the problem to a HJB equation without the penalty part.

Since $K(x,z)\ge \kappa_0>0$, there exits a large enough constant $C$,  independent of $\rho$, such that
 $-C$ and $C$ are  the viscosity sub- and supersolution of \eqref{eq:penalty} with any parameter $\rho$, respectively. Thus by using the comparison principle and  Perron's method (see \cite[Theorem 3.3]{ishii1991monotone}), one can deduce the well-posedness of \eqref{eq:penalty} in the viscosity sense. 
\end{proof}

The next result demonstrates the monotone and locally uniform convergence of the solution $(u^\rho)_{\rho\ge 0}$ of \eqref{eq:penalty} in terms of the penalty parameter $\rho$.
\begin{Theorem}\l{thm:conv_penalty}
Suppose  (H.\ref{assum:regularity}) and (H.\ref{assum:impulse}) hold. 
Then as $\rho\to \infty$, the  solution   of \eqref{eq:penalty}  converges monotonically from above to the bounded viscosity solution of \eqref{eq:qvi_impulse}, uniformly on compact sets.
\end{Theorem}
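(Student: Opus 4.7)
The plan is to invoke the Barles--Perthame--Souganidis half-relaxed limit method together with the comparison principles in Propositions \ref{prop:penalty_comparison} and \ref{prop:impulse_comparison}. First I would establish that the family $(u^\rho)_{\rho\ge 0}$ is monotonically decreasing in $\rho$: for $0\le \rho_1<\rho_2$, the nonnegativity of $(u^{\rho_1}_i-\cM_i u^{\rho_1})^+$ makes $u^{\rho_1}$ a viscosity supersolution of \eqref{eq:penalty} with parameter $\rho_2$, so Proposition \ref{prop:penalty_comparison} yields $u^{\rho_1}\ge u^{\rho_2}$. Combined with the uniform-in-$\rho$ bound of Proposition \ref{prop:penalty_comparison}, this produces a pointwise decreasing limit $\bar u$ as $\rho\to\infty$; identifying $\bar u$ with the viscosity solution of \eqref{eq:qvi_impulse} will also take care of existence.

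Next I would introduce the componentwise half-relaxed limits $u^*=(u^*_i)$ and $u_*=(u_{*,i})$ as in \eqref{eq:relaxed} and show that $u^*$ is a subsolution and $u_*$ a supersolution of \eqref{eq:qvi_impulse}. For the subsolution property, take a strict local maximum $x_0$ of $u^*_i-\phi$ with $\phi\in C^2(\R^d)$; extract $\rho_n\to\infty$ and $x_n\to x_0$ with $x_n$ a local maximum of $u^{\rho_n}_i-\phi$ and $u^{\rho_n}_i(x_n)\to u^*_i(x_0)$, and pass to a further subsequence so that $u^{\rho_n}(x_n)\to \ell\in\R^M$, where $\ell_i=u^*_i(x_0)$ and $\ell_j\le u^*_j(x_0)$ for $j\ne i$ by definition of the relaxed limits. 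Boundedness of $\sup_{\a\in\cA_i}\cL^\a_i$ under (H.\ref{assum:regularity}) combined with the viscosity subsolution inequality for $u^{\rho_n}$ forces $(u^{\rho_n}_i-\cM_i u^{\rho_n})^+(x_n)\to 0$ (since $\rho_n\to\infty$); together with Lemma \ref{Lemma:cM}(3) this gives $u^*_i(x_0)\le \cM_i u^*(x_0)$. For the Hamiltonian part, continuity of $\sup_{\a\in\cA_i}\cL^\a_i$ (from compactness of $\cA_i$) and monotonicity of $\cL^\a_i$ in each off-diagonal argument $s_j$ (guaranteed by $d^\a_{ij}\ge 0$ in (H.\ref{assum:regularity})) allow me to replace $\ell_j$ by the larger $u^*_j(x_0)$ while keeping the inequality, yielding $\sup_{\a\in\cA_i}\cL^\a_i(x_0,u^*(x_0),D\phi(x_0),D^2\phi(x_0))\le 0$. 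The supersolution property of $u_*$ follows by a symmetric dichotomy: either $\rho_n(u^{\rho_n}_i-\cM_i u^{\rho_n})^+(x_n)$ is unbounded along a subsequence, in which case $u^{\rho_n}_i(x_n)>\cM_i u^{\rho_n}(x_n)$ eventually and Lemma \ref{Lemma:cM}(3) delivers $u_{*,i}(x_0)\ge \cM_i u_*(x_0)$, or the penalty term vanishes in the limit and the Hamiltonian inequality $\sup_{\a\in\cA_i}\cL^\a_i(x_0,u_*(x_0),D\phi(x_0),D^2\phi(x_0))\ge 0$ survives (again exploiting that $\cL^\a_i$ is nonincreasing in off-diagonal $s_j$, now in the opposite monotone direction).

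Once the sub- and supersolution properties are in place, Proposition \ref{prop:impulse_comparison} gives $u^*\le u_*$, and the trivial reverse inequality $u_*\le u^*$ forces $u^*=u_*=:\bar u$; this common limit is then continuous and coincides with the unique bounded viscosity solution of \eqref{eq:qvi_impulse}. The monotone pointwise convergence of the continuous functions $u^\rho$ to the continuous limit $\bar u$ then upgrades to locally uniform convergence by Dini's theorem. The main technical obstacle is precisely the weakly coupled structure: along the chosen subsequence, $u^{\rho_n}(x_n)$ is forced to converge to the \emph{correct} relaxed limit only in its $i$-th coordinate, so the off-diagonal components must be absorbed in the right direction via the sign condition $d^\a_{ij}\ge 0$; the complementary subtlety of passing to the half-relaxed limit inside the concave, nonlocal intervention operator $\cM_i$ is exactly what Lemma \ref{Lemma:cM}(3) is tailored to handle.
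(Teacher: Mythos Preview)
Your proposal is correct and follows essentially the same approach as the paper's proof: monotonicity in $\rho$ via the comparison principle, then the Barles--Souganidis half-relaxed limit argument using Lemma \ref{Lemma:cM}(3) for the intervention operator and the sign condition $d^\a_{ij}\ge 0$ to absorb the off-diagonal coupling in the right direction, followed by Proposition \ref{prop:impulse_comparison}. One minor imprecision: your supersolution dichotomy should read $\limsup_n \rho_n(u^{\rho_n}_i-\cM_i u^{\rho_n})^+(x_n)>0$ versus $=0$ (as the paper phrases it) rather than ``unbounded'' versus ``vanishes,'' but the intermediate bounded-nonvanishing case still forces the penalty term to be strictly positive along a subsequence and hence yields the obstacle inequality, so nothing is lost.
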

\begin{proof}
It is clear that if $\rho_1 \le \rho_2$ and $u^{\rho_2}$ is a subsolution to \eqref{eq:penalty} with the parameter $\rho_2$, then $u^{\rho_2}$ is a subsolution to \eqref{eq:penalty} with the parameter $\rho_1$. Hence the comparison principle leads to the fact that $u^0\ge u^{\rho_1}\ge u^{\rho_2}$. 
Now we shall adopt the equivalent definition of viscosity solution in terms of  semi-jets and prove that  the component-wise half-relaxed limit  $u^*$ (resp.~$u_*$) is a subsolution (resp.~supersolution) to \eqref{eq:qvi_impulse}.

We start by showing $u^*$ is a subsolution. Let $x\in \R^d$, $i\in \cI$ and $(p,X)\in J^{2,+}{u}^*_i(x)$, then  by applying \cite[Lemma~6.1]{crandall1992}, there exist sequences  $(x^\rho,p^\rho,X^\rho)_{\rho\in \N}$ such that $(p^\rho,X^\rho)\in J^{2,+}{u}^\rho_i(x^\rho)$ for each $\rho$ and $(x^\rho,{u}^\rho_i(x^\rho),p^\rho,X^\rho)\to (x,u^*_i(x),p,X)$ as $\rho\to\infty$. Since $u^\rho$ is a subsolution to \eqref{eq:penalty}, we have
\bb\l{eq:impluse_sub}
\sup_{\alpha\in\cA_i}\mathcal{L}^{\alpha}_i(x^\rho,u^\rho(x^\rho),p^\rho,X^\rho)+\rho(u^\rho_i-\cM_iu^\rho)^+(x^\rho)\le 0, \q \fa \rho\in \N.
\ee
Then it follows from the boundedness of coefficients that there exists a constant $C>0$ such that
$$
 u^\rho_i(x^\rho)-\cM_i u^\rho(x^\rho)\le (u^\rho_i-\cM_iu^\rho)^+(x^\rho)\le C/\rho,
$$
hence by letting $\rho\to \infty$ and using Lemma \ref{Lemma:cM} (3), we deduce that 
$$
u^*_i(x)= \lim_{\rho\to \infty} u^\rho_i(x^\rho)\le \limsup_{\rho\to \infty} (\cM_i u^\rho(x^\rho)+C/\rho)\le 
\cM_i u^*(x).
$$
On the other hand, \eqref{eq:impluse_sub} yields for any $\a\in \cA_i$, $\mathcal{L}^{\alpha}_i(x^\rho,u^\rho(x^\rho),p^\rho,X^\rho)\le 0$, which implies  that
\bb\l{eq:liminf_conv}
\begin{split}
&-\mathrm{tr}[a^{\alpha}_i(x) X] -
b^{\alpha}_i(x) p +c^\a_i(x)u^*_i(x)-\ell_i^\a (x)\\
&\le \sum_{j\in \cI^{-i}}\limsup_{\rho\to \infty}d_{ij}^\a(x^\rho)u^\rho_j(x^\rho)\le  \sum_{j\in \cI^{-i}}d_{ij}^\a(x)u^*_j(x),
\end{split}
\ee
where we have used the fact that $\lim_{\rho\to \infty}d_{ij}^\a(x^\rho)=d_{ij}^\a(x)\ge 0$. Then by taking the supremum over $\a$, we have
$\sup_{\alpha\in\cA_i}\mathcal{L}^{\alpha}_i(x,u^*(x),p,X)\le 0$,
which shows  $u^*$ is a subsolution to \eqref{eq:qvi_impulse}. 

Then we proceed to study $u_*$ by fixing $x\in \R^d$, $i\in \cI$ and $(p,X)\in J^{2,-}(u_*)_i(x)$. Let $(x^\rho,p^\rho,X^\rho)_{\rho\in \N}$ be a sequence  such that $(p^\rho,X^\rho)\in J^{2,-}{u}^\rho_i(x^\rho)$ for each $\rho$ and $(x^\rho,{u}^\rho_i(x^\rho),p^\rho,X^\rho)\to (x,(u_*)_i(x),p,X)$ as $\rho\to\infty$. Then the supersolution property of $u^\rho$ implies for each $n\in \N$,
\bb\l{eq:impluse_sup}
\sup_{\alpha\in\cA_i}\mathcal{L}^{\alpha}_i(x^\rho,u^\rho(x^\rho),p^\rho,X^\rho)+\rho(u^\rho_i-\cM_iu^\rho)^+(x^\rho)\ge 0.
\ee

Suppose that $\limsup_{\rho\to \infty}\rho(u^\rho_i-\cM_iu^\rho)^+(x^\rho)>0$, then, by possibly passing to a subsequence, we have $u^\rho_i(x^\rho)>(\cM_iu^\rho)(x^\rho)$ for all $\rho$. Then, by using Lemma \ref{Lemma:cM} (3), we obtain that
$(u_*)_i(x)\ge \liminf_{\rho\to \infty}(\cM_iu^\rho)(x^\rho)\ge (\cM_iu_*)(x)$.

On the other hand, suppose that $\limsup_{\rho\to \infty}\rho(u^\rho_i-\cM_iu^\rho)^+(x^\rho)=0$, then for any $\delta>0$ , by  passing to a subsequence, we deduce that for large enough $\rho\in \N$, there exists $\a^{\rho,\delta}\in \cA_i$ such that
$$
-\mathrm{tr}[a^{\alpha^{\rho,\delta}}_i(x^\rho) X^\rho] -
b^{\alpha^{\rho,\delta}}_i(x^\rho) p^\rho +c^{\a^{\rho,\delta}}_i(x^\rho)u^\rho_i(x^\rho)-\ell_i^{\a^{\rho,\delta}} (x^\rho)-  \sum_{j\in \cI^{-i}}d_{ij}^{\a^{\rho,\delta}}(x^\rho)u^\rho_j(x^\rho)\ge\! -\delta.
$$
Since $\cA_i$ is compact, we can assume $\a^{\rho,\delta}\to \a^{\delta}\in \cA_i$ as $\rho\to \infty$. Then, by taking the  limit inferior and using the fact that $\liminf_{\rho\to \infty}d_{ij}^{\a^{\rho,\delta}}(x^\rho)u^\rho_j(x^\rho)\ge d_{ij}^{\a^\delta}(x)(u_*)_j(x)$, we deduce that 
\bb\l{eq:liminf_conv2}
-\mathrm{tr}[a^{\alpha^\delta}_i(x) X] -
b^{\alpha^\delta}_i(x) p +c^{\a^\delta}_i(x)(u_*)_i(x)-\ell_i^{\a^\delta} (x)-\sum_{j\in \cI^{-i}}d_{ij}^{\a^\delta}(x)(u_*)_j(x)\ge -\delta,
\ee
from which, by taking the supremum over $\a$ and sending $\delta\to 0$, we can deduce that 
$\sup_{\alpha\in\cA_i}\mathcal{L}^{\alpha}_i(x,u^*(x),p,X)\ge 0$, and conclude that  $u_*$ is a supersolution to \eqref{eq:qvi_impulse}.

Finally, by using Proposition \ref{prop:impulse_comparison}, we have 
$u\coloneqq {u}^*={u}_*$ is the unique continuous viscosity solution of \eqref{eq:qvi_impulse} and consequently $(u^\rho)$ converges to $u$ locally uniformly. 
\end{proof}
\begin{Remark}
Theorem \ref{thm:conv_penalty} provides us with a constructive proof for the existence of solutions of \eqref{eq:qvi_impulse} based on penalty approximations. Moreover, since the convergence analysis relies only on the comparison principle of \eqref{eq:qvi_impulse} and the local boundedness of $(u^\rho)_{\rho\ge 0}$, it is possible to extend the results to nonlocal non-convex/non-concave systems  with   coefficients of polynomial growth.

\end{Remark}

\section{Error estimates for penalty approximations}\l{sec:error estimate}
In this section, we shall proceed to  analyze the convergence rate of the penalty approximation for \eqref{eq:qvi_impulse}.
As pointed out in Section \ref{sec:intro}, unlike the  variational inequalities \cite{jakobsen2006},  the non-strict monotonicity of the term $u_i-\cM_iu$ prevents us from obtaining an upper bound of $u^\rho-u$ by  constructing a subsolution of \eqref{eq:qvi_impulse}  directly from the penalized equations, which significantly complicates the error analysis. We shall overcome this difficulty by regularizing the HJBQVIs, and recover the same convergence rates (up to a logarithmic term) as those for conventional obstacle problems. 

\subsection{Regularization of HJBQVIs}\l{sec:regularize}
In this section, we  approximate \eqref{eq:qvi_impulse} by  a sequence of obstacle problems, through the iterated optimal stopping approximation (see e.g.~\cite{lions1982,seydel2009,ferretti2017} for its application  to QVIs). We shall   quantify the approximation errors of these obstacle problems depending on the regularity of the solution, which we also establish.

Let  $u^0=(u_i^0)_{i\in\cI}$ be the viscosity solution of the following system of HJB equations:
\begin{align}
\label{eq:iter_0}
\sup_{\alpha\in\cA_i}\mathcal{L}^{\alpha}_i(x,u(x),Du_i(x),D^2u_i(x))=0,\q x\in \R^d, \, i\in\cI.
\end{align}
We then  inductively define a sequence of functions $\{u^n\}_{n\in \N}$, where for each $n\in \N$, i.e., $n>0$, given functions $u^{n-1}$, let $u^n=(u^{n}_i)_{i\in\cI}$ be the viscosity solution to the following obstacle problem:
\bb\l{eq:iter_n}
\max\Big\{
\sup_{\alpha\in\cA_i}\mathcal{L}^{\alpha}_i(x,u^n(x),Du^n_i(x),D^2u^n_i(x)), \   
(u^n_i-\mathcal{M}_i u^{n-1})(x)\Big\}=0,  \q x\in \R^d, \, i\in\cI.
\ee

Under the assumptions (H.\ref{assum:regularity})--(H.\ref{assum:lambda_lip}) and (H.\ref{assum:M_lip}), one can establish the comparison principles for \eqref{eq:iter_0} and
\eqref{eq:iter_n}, and then demonstrate the existence of $u^n\in [C^0_1(\R^d)]^M$ for each $n\ge 0$ (see Theorem \ref{thm:iter_regularity} for the Lipschitz regularity). Moreover, by using the comparison principle of \eqref{eq:iter_n}, we can further deduce from an inductive argument that $u^{n-1}\ge u^n$ for all $n\in \N$.

%
%

The following proposition estimates the  approximation error $u^n-u$, which extends the results in \cite{bonnans2007,ferretti2017} to weakly coupled systems with (possibly) negative running cost $(\ell_i)_{i\in\cI}$.

\begin{Proposition}\l{prop:iter_conv}
Suppose (H.\ref{assum:regularity})--(H.\ref{assum:lambda_lip}) and (H.\ref{assum:M_lip}) hold, and let $u$ and $u^n$ be the viscosity solution to \eqref{eq:qvi_impulse} and \eqref{eq:iter_n}, respectively. 
Then there exist  constants $\mu\in (0,1]$ and $C\ge 0$ such that 
$$
0\le u^n-u\le C{(1-\mu)^n},\q n\ge 0.
$$
Consequently, the iterates $(u^n)_{n\ge 0}$ are bounded uniformly in $n$.
\end{Proposition}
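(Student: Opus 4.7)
My plan is to combine a monotone sandwich $u \leq u^n \leq u^{n-1}$ with a one-step geometric contraction $\delta_n \leq (1-\mu)\delta_{n-1}$, where $\delta_n := |u^n - u|_0$ and $\mu \in (0,1]$; iterating the contraction then delivers the stated bound.

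First I would establish the sandwich by induction on $n$, using the comparison principle for \eqref{eq:iter_n} (already part of Theorem \ref{thm:iter_regularity}). For the base case $u \leq u^0$, $u$ is a subsolution of \eqref{eq:iter_0} because dropping the obstacle in \eqref{eq:qvi_impulse} only weakens the subsolution condition. For $n \geq 1$, assuming $u^{n-1} \geq u$, monotonicity of $\cM_i$ (Lemma \ref{Lemma:cM}(2)) together with the QVI obstacle $u_i \leq \cM_i u$ gives $u_i \leq \cM_i u^{n-1}$, so $u$ is a subsolution of \eqref{eq:iter_n} and $u \leq u^n$. Similarly, the induction hypothesis $u^{n-1} \leq u^{n-2}$ gives $\cM_i u^{n-1} \leq \cM_i u^{n-2}$, so $u^{n-1}$ remains a supersolution of \eqref{eq:iter_n} and $u^n \leq u^{n-1}$. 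This sandwich already delivers the uniform boundedness of $(u^n)$ claimed in the proposition, and in particular $\delta_n \leq \delta_0 := |u^0 - u|_0 < \infty$.

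The one-step contraction is the main obstacle, and its difficulty is the non-strict monotonicity of $v \mapsto v - \cM_i v$. A constant shift $\tilde u := u + \mu \delta_{n-1}$ inherits the PDE slack $\sup_\alpha \cL^\alpha_i \tilde u \geq \mu \lambda_0 \delta_{n-1} > 0$ on the continuation region $\{u_i < \cM_i u\}$ from the inequality $c^\alpha_i - \sum_{j \neq i} d^\alpha_{ij} \geq \lambda_0$ in \eqref{eq:coeff_mono}, but it fails the obstacle inequality on the intervention region $\{u_i = \cM_i u\}$ since $\cM_i u^{n-1} - \cM_i u$ can saturate $\delta_{n-1}$. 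I would close this gap by exploiting $K_i \geq \kappa_0 > 0$ as in \cite{bonnans2007,ferretti2017}: at any $x$ with $u_i(x) = \cM_i u(x)$, the optimal intervention $z^\ast(x)$ obeys $u_i(\Gamma_i(x, z^\ast)) \leq u_i(x) - \kappa_0$, and the sharpened pointwise bound $\cM_i u^{n-1}(x) - \cM_i u(x) \leq (u^{n-1}_i - u_i)(\Gamma_i(x, z^\ast))$ (take $z^\ast$ optimal for $\cM_i u(x)$) iterates across at most $|u^0|_0/\kappa_0$ intervention jumps before the state exits the intervention region, producing a contraction factor $\mu > 0$ governed by $\lambda_0$ and $\kappa_0$. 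An equivalent probabilistic route is to invoke the iterated optimal-stopping representation $u^n = \Phi(u^{n-1})$, the Bellman operator whose fixed point is $u$, and to show $|\Phi(w) - \Phi(u)|_0 \leq (1-\mu) |w - u|_0$ on the invariant set $\{w \geq u\}$ by decomposing the optimal stopping time into its positive-duration continuation piece (discounted geometrically via $\lambda_0$) and its immediate intervention chain (each link absorbing at least $\kappa_0$).

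Once the contraction is in hand, iterating it from $\delta_0 = |u^0 - u|_0$ yields $\delta_n \leq (1-\mu)^n \delta_0$, which is the statement with $C := \delta_0$. The uniform boundedness of $(u^n)_{n \geq 0}$ is already recorded in the sandwich of the first paragraph.
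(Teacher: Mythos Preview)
Your sandwich $u\le u^n\le u^{n-1}$ is correct and already gives the uniform boundedness. The gap is in the one-step contraction $\delta_n\le(1-\mu)\delta_{n-1}$. Your pointwise bound $(u^n_i-u_i)(x)\le(u^{n-1}_i-u_i)(\Gamma_i(x,z^\ast))$ on $\{u_i=\cM_iu\}$ is valid, but each link of the intervention chain costs one unit of the iterate index: after $N\sim|u|_0/\kappa_0$ links you land in the continuation region of $u$ with only $(u^n_i-u_i)(x)\le(u^{n-N}_i-u_i)(x_N)\le\delta_{n-N}$, which is no improvement over $\delta_{n-1}$ since $(\delta_k)$ is nonincreasing. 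The probabilistic version has the same defect: when the optimal stopping time for $u$ satisfies $\tau^\ast=0$, one application of $\Phi$ picks up no discount, and the discount you want from the next continuation episode belongs to the \emph{next} application of $\Phi$. So $\Phi$ is not a contraction on $\{w\ge u\}$, and $\delta_n\le(1-\mu)\delta_{n-1}$ does not follow from the ingredients you list.

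The paper avoids this by contracting the \emph{increments} $u^{n-1}-u^n$ rather than $u^n-u$, using two structural facts you do not invoke: concavity of the solution operator $Q$ (from Lemma~\ref{Lemma:cM}(1) and convexity of $\sup_\alpha\cL^\alpha_i$) and a constant strict subsolution $w\equiv-C$ of the full QVI with slack $\kappa_0$. The mechanism is: if $u^{n-1}-u^n\le\lambda(u^{n-1}-w)$ then $u^n\ge(1-\lambda)u^{n-1}+\lambda w$, so monotonicity and concavity give $u^{n+1}=Qu^n\ge(1-\lambda)u^n+\lambda Qw$; the strict-subsolution slack makes $(1-\mu)w+\mu u^n$ a subsolution of the obstacle problem with obstacle $\cM_iw$ whenever $\mu\le\kappa_0/C$ with $C\ge|(u^n)^+|_0+|w|_0$, hence $Qw\ge(1-\mu)w+\mu u^n$ and $u^n-u^{n+1}\le\lambda(1-\mu)(u^n-w)$. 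Iterating from $\lambda=1$ yields $u^{n-1}-u^n\le(1-\mu)^{n-1}(u^0-w)$; summing the tail gives the stated estimate. This argument stays entirely at the viscosity level and needs neither intervention chains nor a stochastic representation.
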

\begin{proof}
We adapt the arguments for  \cite[Theorem 3.4]{reisinger2018qvi} to the current continuous setting, and  present the main steps in Appendix \ref{appendix} for the reader's convenience. 
\end{proof}

Now we turn to investigate the regularity of solutions to \eqref{eq:iter_n} based on different assumptions on the coefficients. We shall first focus on the following variational inequalities:
\bb\l{eq:iter_ob}
\max\Big\{
\sup_{\alpha\in\cA_i}\mathcal{L}^{\alpha}_i(x,u(x),Du_i(x),D^2u_i(x)), \   
(u_i-\Psi_i)(x)\Big\}=0, \q i\in \cI,
\ee
with  given obstacles $(\Psi_i)_{i\in \cI}$, which serves as a general form of the iterative equations \eqref{eq:iter_n}. 

The following result shows the Lipschitz continuity of the solution to the obstacle problem \eqref{eq:iter_ob}, which can be proved by using the standard doubling of variables technique (see e.g.~\cite{briani2012,ferretti2017}).
\begin{Proposition}\l{prop:ob_Lip}
Suppose (H.\ref{assum:regularity}) and (H.\ref{assum:lambda_lip}) hold, and $\Psi\in [C^0_1(\R^d)]^M$. 
Then the viscosity solution $u$ to  \eqref{eq:iter_ob} is Lipschitz continuous with constant $\sup_{i\in \cI}[u_i]_1\le \max(C,\sup_{i\in \cI}[\Psi_i]_1)$, where $C$ is a constant independent of $[\Psi_i]_1$.
\end{Proposition}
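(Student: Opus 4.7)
The plan is to establish the Lipschitz bound via a doubling-of-variables argument, carefully exploiting (a) the obstacle constraint $u_i\le\Psi_i$, (b) the monotonicity assumption $c_i^\alpha-\sum_{j\ne i}d_{ij}^\alpha\ge\lambda_0$, and (c) the strict inequality $\lambda_0>\sup_{\alpha,i}([\sigma^\alpha_i]_1^2+[b^\alpha_i]_1)$ in (H.\ref{assum:lambda_lip}). Fix a constant $L$ to be chosen large (depending only on the coefficients, not on $\Psi$) with $L\ge\sup_{i\in\cI}[\Psi_i]_1$, and argue by contradiction: suppose some $u_{i^*}$ has Lipschitz constant strictly greater than $L$. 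Then for $\eps,\delta>0$ small enough the penalized functional
\[
\Phi_{\eps,\delta}(i,x,y)\coloneqq u_i(x)-u_i(y)-L\sqrt{|x-y|^2+\delta^2}-\eps(|x|^2+|y|^2)
\]
attains a strictly positive maximum at some $(i^*,\bar x,\bar y)$ with $\bar x\ne\bar y$, and by the boundedness of $u$ one can send $\eps\to 0$ with $\eps(|\bar x|^2+|\bar y|^2)\to 0$.

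Next I would dispose of the obstacle alternative. If $u_{i^*}(\bar y)=\Psi_{i^*}(\bar y)$ (the only way the obstacle part of the supersolution inequality at $\bar y$ can be active, given the everywhere-satisfied constraint $u_{i^*}\le\Psi_{i^*}$), then
\[
u_{i^*}(\bar x)-u_{i^*}(\bar y)\le \Psi_{i^*}(\bar x)-\Psi_{i^*}(\bar y)\le[\Psi_{i^*}]_1|\bar x-\bar y|\le L|\bar x-\bar y|,
\]
contradicting $\Phi_{\eps,\delta}(i^*,\bar x,\bar y)>0$. Hence we may assume the HJB part is active at $\bar y$, i.e.\ $\sup_{\alpha}\cL^\alpha_{i^*}(\bar y,u(\bar y),p_y,Y)\ge 0$, while the subsolution inequality at $\bar x$ gives $\sup_\alpha \cL^\alpha_{i^*}(\bar x,u(\bar x),p_x,X)\le 0$. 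Here $(p_x,X)$ and $(p_y,Y)$ are the jets furnished by Ishii's lemma applied to the test function $L\sqrt{|x-y|^2+\delta^2}+\eps(|x|^2+|y|^2)$. Compactness of $\cA_{i^*}$ and continuity of the coefficients allow me to pick a common $\alpha^*$ attaining (up to an arbitrarily small $\eta$) the supremum at $\bar y$, and to write the core inequality $\cL^{\alpha^*}_{i^*}(\bar x,u(\bar x),p_x,X)-\cL^{\alpha^*}_{i^*}(\bar y,u(\bar y),p_y,Y)\le \eta$.

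The decisive step is to bound each contribution to this difference. Using Ishii's lemma in the standard way, $-\tr[a_{i^*}^{\alpha^*}(\bar x)X-a_{i^*}^{\alpha^*}(\bar y)Y]\ge -[\sigma_{i^*}^{\alpha^*}]_1^2 L|\bar x-\bar y|+o_{\delta,\eps}(1)$, and the first-order term contributes $-[b_{i^*}^{\alpha^*}]_1 L|\bar x-\bar y|+o_{\eps}(1)$. The source and coupling terms are where the system structure and the definition of $i^*$ are used: since $\Phi_{\eps,\delta}(j,\bar x,\bar y)\le \Phi_{\eps,\delta}(i^*,\bar x,\bar y)$ for every $j$, I obtain $\Delta_j:=u_j(\bar x)-u_j(\bar y)\le\Delta_{i^*}$; combined with $d_{ij}^{\alpha^*}\ge 0$ and $c_{i^*}^{\alpha^*}-\sum_{j\ne i^*}d_{i^*j}^{\alpha^*}\ge\lambda_0$, the $c$- and $d$-terms contribute at least $\lambda_0\Delta_{i^*}$ up to a term bounded by $(|c|_1+\sum_j|d_{ij}|_1)|u|_0|\bar x-\bar y|$; finally the $\ell$-term is controlled by $[\ell^{\alpha^*}_{i^*}]_1|\bar x-\bar y|$. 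Collecting everything and using $\Delta_{i^*}\ge L|\bar x-\bar y|$ yields
\[
\bigl(\lambda_0-[\sigma_{i^*}^{\alpha^*}]_1^2-[b_{i^*}^{\alpha^*}]_1\bigr)L|\bar x-\bar y|\le C_1|\bar x-\bar y|+o_{\delta,\eps}(1)+\eta,
\]
where $C_1$ depends only on the bounds in (H.\ref{assum:regularity}). Passing to the limit $\eps,\delta,\eta\to 0$ forces $L\le C_1/(\lambda_0-\sup_{\alpha,i}([\sigma^\alpha_i]_1^2+[b^\alpha_i]_1))=:C$, the desired constant independent of $\Psi$; this contradicts $L>C$.

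The main obstacle is ensuring the correct sign and ordering in the coupling estimate: one has to combine the maximality of $i^*$ (giving $\Delta_j\le\Delta_{i^*}$) with the nonnegativity $d_{ij}^{\alpha}\ge 0$ to extract the full $\lambda_0\Delta_{i^*}$ rather than only $(c_i^{\alpha}-\sum_j d_{ij}^{\alpha})(\min_j \Delta_j)$, and simultaneously to handle the $\alpha$-dependence using the compactness and continuity in (H.\ref{assum:regularity}). The rest of the analysis—splitting into HJB versus obstacle cases at $\bar y$, and absorbing the $\eps|x|^2+\eps|y|^2$ localization—is by now routine for weakly coupled systems as in \cite{briani2012,ferretti2017}.
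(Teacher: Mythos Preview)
Your proposal is correct and follows exactly the approach the paper indicates: the paper does not give a detailed proof but merely states that the result ``can be proved by using the standard doubling of variables technique (see e.g.~\cite{briani2012,ferretti2017})'', and your argument is a faithful execution of precisely this method, including the case distinction between the obstacle and HJB alternatives and the use of the maximizing index $i^*$ to handle the coupling terms. The only points requiring care in a complete write-up are the sharp constant in the second-order estimate (to match the threshold in (H.\ref{assum:lambda_lip}) without an extraneous factor) and ensuring $|\bar x-\bar y|$ stays bounded away from zero along the limiting sequence, both of which are standard and treated in the cited references.
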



We then proceed to study higher  regularity of the solutions, which enables us to deduce a higher convergence rate of the penalty approximation. 
The next proposition extends the results in \cite{ishii1990,ishii1995} to weakly coupled systems, and asserts that  if the coefficients are sufficiently regular, then the  solution to the  obstacle problem \eqref{eq:iter_ob} is  semiconcave.

Note that instead of viewing the obstacle problem \eqref{eq:iter_ob} as a convex  HJB equation as is studied in \cite{ishii1990,ishii1995},  we shall separately analyze the obstacle part and the HJB part of \eqref{eq:iter_ob}, which leads to a sharper estimate for  the semiconcavity constant of $u$ in terms of $\Psi$. Moreover, instead of requiring $\Psi_i\in W^{2,\infty}(\R^d)$ as in \cite{ishii1990,ishii1995} (which essentially means $\Psi_i$ is  differentiable with bounded and Lipschitz continuous derivative),  we only assume the obstacles   to be semiconcave, which is crucial for  the subsequent analysis of penalty errors. 

\begin{Proposition}\l{prop:semiconcave_ob}
Suppose (H.\ref{assum:regularity}) and  (H.\ref{assum:regularity_concave})  hold. Assume further that the constant $\lambda_0$ in (H.\ref{assum:regularity}) is sufficiently large and the obstacle $\Psi\in [C^0_1(\R^d)]^M$ is semiconcave.
Then the viscosity solution  $u\in [C^0_1(\R^d)]^M$ to \eqref{eq:iter_ob}  is semiconcave with a  constant  satisfying
the  estimate
$$
\sup_{i\in \cI}[u_i]_{2,+}\le \max\bigg\{C\sup_{i\in \cI} |u_i|_1, \sup_{i\in \cI} [\Psi_i]_{2,+}\bigg\}, 
$$
for some constant $C$, independent of $[\Psi_j]_{2,+}$, $[\Psi_j]_{1}$ and $[u_j]_1$ for all $j\in \cI$.
\end{Proposition}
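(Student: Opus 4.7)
The approach is the classical three-variable doubling technique for semiconcavity (cf.\ \cite{ishii1990,ishii1995}), augmented by a case analysis at the middle variable according to whether the obstacle constraint is active. The key novelty relative to \cite{ishii1990,ishii1995} is to treat the obstacle and the HJB part of \eqref{eq:iter_ob} separately, so that the final bound involves $[\Psi_i]_{2,+}$ but not $[\Psi_i]_1$. Fix $i \in \cI$, set $L \coloneqq \max\bigl\{C^* \sup_{j\in\cI}|u_j|_1,\ \sup_{j\in\cI}[\Psi_j]_{2,+}\bigr\}$ for a constant $C^*$ to be chosen, and introduce
\[
\Phi_{A,\varepsilon}(x, y, z) \coloneqq u_i(x) - 2 u_i(y) + u_i(z) - L|x - y|^2 - A|x + z - 2y|^2 - \varepsilon(|x|^2 + |y|^2 + |z|^2),
\]
with $A,\varepsilon>0$. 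Boundedness of $u$ (from Proposition \ref{prop:ob_Lip}) implies $\Phi_{A,\varepsilon}$ attains a maximum at some $(\bar x, \bar y, \bar z)$, with $|\bar x + \bar z - 2\bar y|^2 \lesssim 1/A$; in the iterated limit $A \to \infty$, $\varepsilon \to 0$, the supremum of $\Phi_{A,\varepsilon}$ converges to $\sup_{y,h}\bigl[u_i(y+h) - 2u_i(y) + u_i(y-h) - L|h|^2\bigr]$, so proving this limit is non-positive uniformly in $i$ yields semiconcavity with constant $L$. Arguing by contradiction, assume the limit is positive, and apply the Crandall--Ishii lemma to the three copies of $u_i$ with the penalty $\phi(x,y,z)\coloneqq L|x-y|^2+A|x+z-2y|^2+\varepsilon(|x|^2+|y|^2+|z|^2)$ as test function; this produces semi-jets $(p_x,X)\in \bar J^{2,+}u_i(\bar x)$, $(p_z,Z)\in \bar J^{2,+}u_i(\bar z)$, $(p_y,Y)\in \bar J^{2,-}u_i(\bar y)$ whose first-order parts are the partial gradients of $\phi$ and whose second-order parts satisfy a matrix inequality controlling $X - 2Y + Z$ by $O(L)\cdot \mathrm{Id}$ up to $O(\varepsilon)$.

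\emph{Case A (obstacle inactive at $\bar y$).} If $u_i(\bar y) < \Psi_i(\bar y)$, the supersolution property yields $\sup_{\alpha\in\cA_i} \cL^\alpha_i(\bar y, u(\bar y), p_y, Y) \ge 0$; by compactness of $\cA_i$ I select $\alpha^* \in \cA_i$ nearly attaining the supremum. The subsolution inequalities $\cL^{\alpha^*}_i(\bar x, u(\bar x), p_x, X) \le 0$ and $\cL^{\alpha^*}_i(\bar z, u(\bar z), p_z, Z) \le 0$ combined with the above give
\[
\cL^{\alpha^*}_i(\bar x, u(\bar x), p_x, X) - 2 \cL^{\alpha^*}_i(\bar y, u(\bar y), p_y, Y) + \cL^{\alpha^*}_i(\bar z, u(\bar z), p_z, Z) \le o(1).
\]
Expanding each coefficient using semiconcavity of $\ell^{\alpha^*}_i$, the $C^1$-bounds on $\sigma^{\alpha^*}_i, b^{\alpha^*}_i, c^{\alpha^*}_i, d^{\alpha^*}_{ij}$ from (H.\ref{assum:regularity_concave}), and the Lipschitz bound on $u$, the dominant positive contribution is $c^{\alpha^*}_i(\bar y)\bigl(u_i(\bar x)-2u_i(\bar y)+u_i(\bar z)\bigr) -\mathrm{tr}[a^{\alpha^*}_i(\bar y)(X-2Y+Z)]$. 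The coupling terms $-\sum_{j\ne i}d^{\alpha^*}_{ij}(\cdot)u_j(\cdot)$ at the three points produce second differences of $u_j$ for $j\ne i$, which I resolve by running the argument for all $i \in \cI$ simultaneously and taking suprema of $\Phi_{A,\varepsilon}$ over $i$ as well, so that the strict monotonicity $c^\alpha_i - \sum_j d^\alpha_{ij} \ge \lambda_0$ dominates the off-diagonal contributions. With $C^*$ in $L$ chosen large enough relative to $\sup_j|u_j|_1$ and $\lambda_0$ sufficiently large compared to $\sup_{\alpha,i}([\sigma^\alpha_i]_1^2 + [b^\alpha_i]_1)$, this yields a strict inequality contradicting positivity of $\Phi_{A,\varepsilon}(\bar x,\bar y,\bar z)$ in the limit.

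\emph{Case B (obstacle active at $\bar y$).} If $u_i(\bar y) = \Psi_i(\bar y)$, then the constraint $u_i \le \Psi_i$ on $\R^d$ yields
\[
u_i(\bar x) - 2u_i(\bar y) + u_i(\bar z) \le \Psi_i(\bar x) - 2\Psi_i(\bar y) + \Psi_i(\bar z) \le [\Psi_i]_{2,+}\,|\bar x - \bar y|^2 + [\Psi_i]_1\,|\bar x + \bar z - 2\bar y|,
\]
where I expand $\Psi_i(\bar z)$ around the reflected point $2\bar y - \bar x$ and use Lipschitz continuity of $\Psi_i$ for the residual. The first term is absorbed into $L|\bar x - \bar y|^2$ (since $L \ge [\Psi_i]_{2,+}$), and the second is $O(1/\sqrt A)$, so again contradicts positivity in the limit $A \to \infty$. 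The main technical difficulty lies in Case A, specifically in the uniform-in-$i$ control of the cross-regime second differences described above; the decision to split into Cases A and B (rather than folding the obstacle into the HJB nonlinearity as in \cite{ishii1990,ishii1995}) is precisely what avoids any appearance of $[\Psi_i]_1$ on the right-hand side, producing the sharper bound stated in the proposition.
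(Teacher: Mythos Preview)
Your overall strategy matches the paper's: three-variable doubling, Crandall--Ishii lemma, and a case split at the ``center'' point according to whether the obstacle or the HJB part is active in the supersolution inequality. Your Case~B argument is correct and is essentially the paper's Case~1.

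There is, however, a genuine technical gap in Case~A stemming from your choice of test function. You penalise with $L|x-y|^2$ (quadratic, $L$ fixed at the target constant) together with $A|x+z-2y|^2$ and send $A\to\infty$. The paper instead uses $\delta|x-y|^4$ (quartic, $\delta$ a free parameter optimised at the end), following Ishii~\cite{ishii1995}. This difference matters for the variable-coefficient second-order term. After the matrix inequality, testing with $(\sigma(\bar x)e_k,\sigma(\bar y)e_k,\sigma(\bar z)e_k)$ produces a contribution
\[
\theta A\,\|\sigma(\bar x)+\sigma(\bar z)-2\sigma(\bar y)\|_F^2.
\]
Using the $C^{1,1}$ regularity of $\sigma$ from (H.\ref{assum:regularity_concave}), the second difference of $\sigma$ contains a remainder of size $[D\sigma]_1\,|\bar x-\bar y|^2$, so the displayed term is at least of order $A\,|\bar x-\bar y|^4$. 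Your penalty only forces $|\bar x-\bar y|^2\lesssim L^{-1}$ (not $|\bar x-\bar y|\to 0$), so this error diverges as $A\to\infty$. The drift term produces an analogous $A^{1/2}|\bar x-\bar y|^2$ contribution with the same defect. With the quartic penalty $\delta|x-y|^4$, the Hessian of the penalty carries an extra factor $|\bar x-\bar y|^2$, and the Ishii argument balances all terms by optimising over $\delta$; this is precisely why \cite{ishii1995} and the paper use that form. Your claim that ``$X-2Y+Z$ is controlled by $O(L)\cdot\mathrm{Id}$'' is also slightly off (it is actually $O(\varepsilon)$ when tested with equal vectors), but the real obstruction is the cross terms with varying $\sigma$ and $b$ just described.

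The fix is either to replace $L|x-y|^2$ by $\delta|x-y|^4$ and optimise over $\delta$ at the end (as the paper does), or to keep your contradiction framework but couple $A$ to $L$ carefully rather than sending $A\to\infty$ independently; the first route is cleaner and is what the cited literature supports.
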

\begin{proof}
For $\delta,\eps,\gamma>0$, we define for all $x,y,z\in \R^d$ that
\begin{align*}
\phi(x,y,z)&=\delta|x-y|^4+ \eps |x+y-2z|^2+\gamma |x|^2,\\
\Phi_i(x,y,z)&=u_i(x)+u_i(y)-2u_i(z)-\phi(x,y,z),
\end{align*}
and let $m_{\delta,\eps,\gamma}\coloneqq \sup_{(x,y,z)\in \R^{3d}, i\in \cI}\Phi_i(x,y,z)$. 
By the finiteness of $\cI$,  the boundedness and continuity of $(u_i)_{i\in\cI}$, and the penalization term $\phi$, 
 there exists $i\in \cI$, independent of $\delta,\eps,\gamma$, and $(\bar{x}^{\delta,\eps,\gamma},\bar{y}^{\delta,\eps,\gamma},\bar{z}^{\delta,\eps,\gamma})\in \R^{3d}$ such that $m_{\delta,\eps,\gamma}=\Phi_i(\bar{x}^{\delta,\eps,\gamma},\bar{y}^{\delta,\eps,\gamma},\bar{z}^{\delta,\eps,\gamma})$. In the following we shall  omit the dependence on $\delta,\eps,\gamma$ for notational simplicity. Then   we can deduce from  \cite[Theorem 3.2]{crandall1992} that for any $\theta>1$, there exist  $X, Y, Z\in \bS^d$ such that
$$
(p_x, X)\in \bar{J}^{2,+}u(\bar{x}),\q (p_y,Y)\in \bar{J}^{2,+}u(\bar{y}), \q (-p_z/2,-Z/2)\in \bar{J}^{2,-}u(\bar{z}),
$$
where $(p_x,p_y,p_z)=(D_x\phi(\bar{x},\bar{y},\bar{z}), D_y\phi(\bar{x},\bar{y},\bar{z}),D_z\phi(\bar{x},\bar{y},\bar{z}))$, and 
$$
\begin{pmatrix} X &0 &0\\ 0& Y &0\\ 0& 0&Z \end{pmatrix}\le \theta D^2\phi (\bar{x},\bar{y},\bar{z}).
$$
Hence, by the definition of viscosity solution, we obtain that
\begin{align}\l{eq:three_sub}
\begin{split}
\max\Big\{
\sup_{\alpha\in\cA_i}\mathcal{L}^{\alpha}_i(\bar{x},u(\bar{x}),p_x,X), \   
(u_i-\Psi_i)(\bar{x})\Big\}&\le 0,\\
\max\Big\{\sup_{\alpha\in\cA_i}\mathcal{L}^{\alpha}_i(\bar{y},u(\bar{y}), p_y,Y), \   
(u_i-\Psi_i)(\bar{y})\Big\}&\le 0,\\
\max\Big\{\sup_{\alpha\in\cA_i}\mathcal{L}^{\alpha}_i(\bar{z},u(\bar{z}),-p_z/2,-Z/2), \   
(u_i-\Psi_i)(\bar{z})\Big\}&\ge 0. 
\end{split}
\end{align}

Now we discuss two cases. Suppose the maximum in the third inequality of \eqref{eq:three_sub} is attained by its second argument, then 
we obtain from \eqref{eq:three_sub} that
\begin{align}
&u_i(\bar{x})+u_i(\bar{y})-2u_i(\bar{z})\nb\\
&\le \Psi_i(\bar{x})+\Psi_i(\bar{y})-2\Psi_i(\bar{z})
= \Psi_i(\bar{x})+\Psi_i(\bar{y})-2\Psi_i(\f{\bar{x}+\bar{y}}{2})+2\Psi_i(\f{\bar{x}+\bar{y}}{2})-2\Psi_i(\bar{z}) \nb\\
&\le [\Psi_i]_{2,+}|\bar{x}-\bar{y}|^2/4+[\Psi_i]_1|\bar{x}+\bar{y}-2\bar{z}|, \nb
\end{align}
where we have used the Lipschitz continuity and semiconcavity of $\Psi_i$. Thus, the definition of $m_{\delta,\eps,\gamma}$ and the fact that $\sup_{r> 0}(-\delta r^2+Cr)=C^2/(4\delta)$ give us that
\begin{align*}
m_{\delta,\eps,\gamma}&\le [\Psi_i]_{2,+}|\bar{x}-\bar{y}|^2/4+[\Psi_i]_1|\bar{x}+\bar{y}-2\bar{z}|-\delta|\bar{x}-\bar{y}|^4-\eps |\bar{x}+\bar{y}-2\bar{z}|^2\\
&\le \f{[\Psi_i]_{2,+}^2}{64\delta}+\f{[\Psi_i]_{1}^2}{4\eps}.
\end{align*}
Thus, by letting $\gamma\to \infty$, 
we have for all  $x,y,z\in \R^d$ that
\begin{align*}
u_i(x)+u_i(y)-2u_i(z)\le \f{[\Psi_i]_{2,+}^2}{64\delta}+\f{[\Psi_i]_{1}^2}{4\eps}+\delta|x-y|^4+ \eps|x+y-2z|^2, \q \fa \delta, \eps>0,
\end{align*}
from which by  minimizing over $\delta,\eps$ separately, and setting $x=z+h$, $y=z-h$,  we obtain that
\bb\l{eq:semi_1}
u_i(z+h)+u_i(z-h)-2u_i(z)\le  {[\Psi_i]_{2,+}}|2h|^2/4= [\Psi_i]_{2,+}|h|^2.
\ee

On the other hand, suppose the maximum in  the third inequality of \eqref{eq:three_sub}   is attained by the first argument, then 
for any $\eta>0$, there exists $\a^\eta\in \cA$ such that the following inequality holds:
$$
\mathcal{L}^{\alpha^\eta}_i(\bar{x},u(\bar{x}),p_x,X)+\mathcal{L}^{\alpha^\eta}_i(\bar{y},u(\bar{y}), p_y,Y)-2(\mathcal{L}^{\alpha^\eta}_i(\bar{z},u(\bar{z}),-p_z/2,-Z/2)+\eta)\le 0.
$$
More precisely, we have
\begin{align*}
\begin{split}
&-\mathrm{tr}[a^{\alpha^\eta}_i(\bar{x}) X+a^{\alpha^\eta}_i(\bar{y}) Y+a^{\alpha^\eta}_i(\bar{z}) Z] -
[b^{\alpha^\eta}_i(\bar{x}) p_x+b^{\alpha^\eta}_i(\bar{y}) p_y+b^{\alpha^\eta}_i(\bar{z}) p_z]\\
&+c^{\a^\eta}_i(\bar{x})u_i(\bar{x})+c^{\a^\eta}_i(\bar{y})u_i(\bar{y})-2c^{\a^\eta}_i(\bar{z})u_i(\bar{z}) -[\ell^{\a^\eta}_i(\bar{x})+\ell^{\a^\eta}_i(\bar{y})-2\ell^{\a^\eta}_i(\bar{z})]-2\eta\\
&\le \sum_{j\in \cI^{-i}}d^{\a^\eta}_{ij}(\bar{x})u_j(\bar{x})+d^{\a^\eta}_{ij}(\bar{y})u_j(\bar{y})-2d^{\a^\eta}_{ij}(\bar{z})u_j(\bar{z}).
\end{split}
\end{align*}
Comparing with \cite[Theorem 5 (ii)]{ishii1995}, it remains  to  estimate the terms in the last line  of the above inequality.
Note for any given function $g\in C_1^0(\R^d)$ and $x,z\in \R^d$, we have that
\begin{align*}
&|g(x)-g(z)|\\
&\le \big|g(x)-g(\f{x+y}{2})\big|+\big|g(\f{x+y}{2})-g(z)\big|
\le [g]_1\f{|x-y|}{2}+(|g|_0[g]_1|x+y-2z|)^{1/2}\\
&\le |g|_1 (|x-y|+|x+y-2z|^{1/2}),\q \fa y\in \R^d.
\end{align*}
Therefore, 
we obtain for each $j\in \cI^{-i}$ that
\begin{align*}
&d^{\a^\eta}_{ij}(\bar{x})u_j(\bar{x})\,+\, d^{\a^\eta}_{ij}(\bar{y})u_j(\bar{y})-2d^{\a^\eta}_{ij}(\bar{z})u_j(\bar{z})\\
=\,&d^{\a^\eta}_{ij}\big(\bar{z})(u_j(\bar{x})+u_j(\bar{y})-2u_j(\bar{z})\big)+\big(d^{\a^\eta}_{ij}(\bar{x})+d^{\a^\eta}_{ij}(\bar{y})-2d^{\a^\eta}_{ij}(\bar{z})\big)u_j(\bar{z})\\
&+(d^{\a^\eta}_{ij}(\bar{x})-d^{\a^\eta}_{ij}(\bar{z}))(u_j(\bar{x})-u_j(\bar{z}))+(d^{\a^\eta}_{ij}(\bar{y})-d^{\a^\eta}_{ij}(\bar{z}))(u_j(\bar{y})-u_j(\bar{z}))\\
\le \,&d^{\a^\eta}_{ij}\big(\bar{z})(u_i(\bar{x})+u_i(\bar{y}\big)-2u_i(\bar{z}))+\big([Dd^{\a^\eta}_{ij}]_{1}|\bar{x}-\bar{y}|^2/4+[d^{\a^\eta}_{ij}]_1|\bar{x}+\bar{y}-2\bar{z}|\big)|u_j|_0\\
&+4|d^{\a^\eta}_{ij}|_1|u_j|_1 (|x-y|^2+|x+y-2z|).
\end{align*}
Then if $\lambda_0$ is sufficiently large, we can proceed  along lines of the proof of \cite[Theorem 5 (ii)]{ishii1995}, and deduce that there exists a constant $C\ge 0$, independent of  $[u_i]_1$ for any $i\in \cI$, such that it holds  for all $z,h\in \R^d$ and $i\in \cI$ that
$u_i(z+h)+u_i(z-h)-2u_i(z)\le C(\sup_{i\in \cI} |u_i|_1)|h|^2$ (cf.~equation (5.8) in \cite{ishii1995}),
which together with \eqref{eq:semi_1} completes our proof.
\end{proof}

With Propositions \ref{prop:iter_conv}  and \ref{prop:semiconcave_ob} in hand, we are ready to present the following upper bounds of the Lipschitz and semiconcavity constants of the iterates $(u^n)_{n\in \N}$ defined as in \eqref{eq:iter_0} and \eqref{eq:iter_n}.
\begin{Theorem}\l{thm:iter_regularity}
Suppose (H.\ref{assum:regularity})--(H.\ref{assum:lambda_lip}) and (H.\ref{assum:M_lip}) hold, then for any $n\in \N$, the iterate $u^n$   is Lipschitz continuous with a constant  satisfying 
$\sup_{i\in \cI}[u^n_i]_{1} \le Cn$, where  $C$ is a constant independent of $n$. If we further assume (H.\ref{assum:regularity_concave}) and (H.\ref{assum:M_concave}) hold, and the constant $\lambda_0$ in (H.\ref{assum:regularity}) is sufficiently large,  then
the iterate $u^n$ is semiconcave with a  constant  satisfying 
$\sup_{i\in \cI}[u^n_i]_{2,+} \le Cn$.
\end{Theorem}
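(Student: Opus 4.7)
The plan is to prove both claims by induction on $n$, using Propositions \ref{prop:ob_Lip} and \ref{prop:semiconcave_ob} at each step together with the preservation properties (H.\ref{assum:M_lip}) and (H.\ref{assum:M_concave}) of the intervention operator. The key observation is that each iterate inflates the relevant seminorm by at most an additive constant, yielding linear-in-$n$ growth. For the Lipschitz bound, the base case $n=0$ follows by applying Proposition \ref{prop:ob_Lip} to \eqref{eq:iter_0}, viewed as an instance of \eqref{eq:iter_ob} with trivial obstacle $\Psi_i \equiv M$ for any $M \geq \sup_i |u^0_i|_0$, giving $\sup_i [u^0_i]_1 \le C$. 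For the inductive step, since $\cM_i u^{n-1}$ depends only on the $i$-th component $u^{n-1}_i$, assumption (H.\ref{assum:M_lip}) yields $[\cM_i u^{n-1}]_1 \le [u^{n-1}_i]_1 + C'$, and Proposition \ref{prop:ob_Lip} applied to \eqref{eq:iter_n} gives
\[
\sup_i [u^n_i]_1 \;\le\; \max\bigl\{C,\; \sup_j [u^{n-1}_j]_1 + C'\bigr\},
\]
so $\sup_i [u^n_i]_1 \le C_1 n + C_0$ follows for suitable constants $C_0, C_1$ independent of $n$.

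For the semiconcavity bound, I would first combine Proposition \ref{prop:iter_conv} (uniform boundedness of $(u^n)_{n\ge 0}$) with the Lipschitz estimate just established to obtain $\sup_i |u^n_i|_1 \le C_2 n + C_3$. The base case $n=0$ again follows from Proposition \ref{prop:semiconcave_ob} applied with a constant (hence semiconcave, Lipschitz) obstacle of sufficient size. For the inductive step, assuming $\sup_i [u^{n-1}_i]_{2,+} \le C_4(n-1)$, assumption (H.\ref{assum:M_concave}) gives $[\cM_i u^{n-1}]_{2,+} \le [u^{n-1}_i]_{2,+} + C''$, and applying Proposition \ref{prop:semiconcave_ob} to \eqref{eq:iter_n} (treated as \eqref{eq:iter_ob} with $\Psi = \cM u^{n-1}$) produces
\[
\sup_i [u^n_i]_{2,+} \;\le\; \max\bigl\{C\sup_i |u^n_i|_1,\; \sup_j [u^{n-1}_j]_{2,+} + C''\bigr\},
\]
which is $O(n)$. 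Choosing $C_4$ sufficiently large to dominate both $C\cdot C_2$ and $C''$ closes the induction and yields $\sup_i [u^n_i]_{2,+} \le C_4 n$.

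The main obstacle, and the reason the sharpened form of Proposition \ref{prop:semiconcave_ob} is indispensable, is precisely that its bound on $[u^n_i]_{2,+}$ is the \emph{maximum} (rather than the sum) of $C|u^n_i|_1$ and $[\Psi_i]_{2,+}$, with the constant $C$ independent of $[\Psi_j]_1$ and $[\Psi_j]_{2,+}$. Any multiplicative dependence on the obstacle's semiconcavity constant would cause the induction to produce super-linear (indeed exponential) growth in $n$, which would invalidate the subsequent combination with the $O((1-\mu)^n)$ iterated-stopping bound from Proposition \ref{prop:iter_conv} in the error analysis of the penalty scheme. Thus the linear growth asserted in the theorem is a direct consequence of the decoupled structure of Proposition \ref{prop:semiconcave_ob}, itself obtained by separately analyzing the obstacle and HJB parts of \eqref{eq:iter_ob} rather than treating it as a single convex HJB equation.
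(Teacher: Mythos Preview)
Your proposal is correct and follows essentially the same inductive strategy as the paper's proof, invoking Propositions \ref{prop:ob_Lip} and \ref{prop:semiconcave_ob} together with (H.\ref{assum:M_lip}) and (H.\ref{assum:M_concave}) at each step to obtain the additive-constant recursion and hence linear growth. The only minor difference is the treatment of the base case $n=0$: the paper appeals directly to known regularity results for weakly coupled HJB systems (citing \cite{briani2012} for Lipschitz continuity and the HJB-part analysis within Proposition \ref{prop:semiconcave_ob} for semiconcavity), whereas you reduce to the same propositions by inserting a trivial constant obstacle $\Psi_i\equiv M\ge |u^0_i|_0$, which is a clean self-contained alternative.
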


\begin{proof}

It is well understood  that the solution $u^0$ to a weakly coupled system with convex Hamiltonians is Lipschitz continuous under (H.\ref{assum:regularity})--(H.\ref{assum:lambda_lip}) (see \cite{briani2012}), and is semiconcave  if  the coefficients enjoy higher regularity (see the second case  in the proof of Proposition \ref{prop:semiconcave_ob}). 
We now use an inductive argument to estimate the regularity of the iterates $(u^n)_{n\in \N}$.

It has been shown in Proposition \ref{prop:iter_conv} that $(u^n)_{n\in \N}$ are  bounded uniformly in $n$. Now suppose $u^{n-1}$ is Lipschitz continuous, and (H.\ref{assum:M_lip}) holds. Then we can deduce from Proposition \ref{prop:ob_Lip} that
\bb\l{eq:lip_est}
\sup_{i\in \cI}[u^n_i]_1\le \max\bigg(C',\sup_{i\in \cI}[\cM_iu^{n-1}]_1\bigg)\le  \max\bigg(C',\sup_{i\in \cI}[u^{n-1}_i]_1+C\bigg),
\ee
where  $C'$ is a constant independent of $n$, and $C$ is the constant in (H.\ref{assum:M_lip}). An inductive argument enables us to conclude the desired estimate $[u^n_i]_{1}=\cO(n)$ for all $i$.

Moreover, by further assuming (H.\ref{assum:M_concave})  and the assumptions of Proposition \ref{prop:semiconcave_ob}, we can obtain  for all $i\in \cI$ the following estimate: for all $i\in \cI$,
\bb\l{eq:concave_est}
[u^n_i]_{2,+}\le \max 
\bigg(C'\sup_{i\in \cI} |u^n_i|_1, \sup_{i\in \cI}[\cM_iu^{n-1}_i]_{2,+}\!\bigg)\le \max \bigg(C'\sup_{i\in \cI} |u^n_i|_1, \sup_{i\in \cI}[u^{n-1}_i]_{2,+}+C\bigg), 
\ee
where  $C'$ is a constant independent of $n$, and $C$ is the constant in (H.\ref{assum:M_concave}).
Then, by using the previous Lipschitz estimates of $(u^n)_{n\in \N}$, we conclude from \eqref{eq:concave_est} that $[u^n_i]_{2,+}=\cO(n)$ for all $i$.
\end{proof}

\begin{Remark}\l{rmk:const_k}
Suppose (H.\ref{assum:M_lip}) and (H.\ref{assum:M_concave})  hold with $C=0$ (e.g.~the intervention operator $\cM$ is of the form \eqref{eq:M_common}), then the estimates \eqref{eq:lip_est} and \eqref{eq:concave_est} hold with $C=0$. Thus one can show inductively that for any $n\ge 0$, the Lipschitz constant $[u^n]_1$ and the semiconcavity constant $[u^n]_{2,+}$ of the iterate $u^n$ are   uniformly bounded in terms of $n$, which along with Proposition \ref{prop:iter_conv}, imply that the solution to HJBQVI \eqref{eq:qvi_impulse} is Lipschitz continuous and semiconcave.  As we shall see in Remark \ref{rml:penalty_const_k}, this observation enables us to  improve the convergence rate of the penalty approximation by a log factor.
\end{Remark}

\subsection{Regularization of penalized equations}\l{sec:regularization_penalty}
 In this section, we shall propose a sequence of auxiliary problems to the penalized equation \eqref{eq:penalty} with a fixed parameter $\rho>0$, which is 
 similar to the regularization of the QVI \eqref{eq:qvi_impulse} discussed in  Section \ref{sec:regularize}. These auxiliary problems will  serve as an important tool for quantifying convergence orders of the penalty approximations.
 
 More precisely, for any given penalty parameter $\rho>0$, we shall consider the following sequence of auxiliary problems: 
let $u^{\rho,0}=u^0$ be the solution to \eqref{eq:iter_0}, and for each $n\ge1$, given $u^{\rho,n-1}$, let $u^{\rho,n}$ be the solution to the following equations:
\bb\l{eq:penalty_n}
\sup_{\alpha\in\cA_i}\mathcal{L}^{\alpha}_i(x,u^{\rho,n}(x),Du^{\rho,n}_i(x),D^2u^{\rho,n}_i(x))+\rho(u^{\rho,n}_i-\cM_iu^{\rho,n-1})^+(x)=0, \q i\in \cI.
\ee
The above iterates   $(u^{\rho,n})_{n\ge 0}$ can be  equivalently expressed as $u^{\rho,n} = Q^\rho u^{\rho,n-1}$ for all $n\in \N$ with an operator $Q^\rho:[C^0_1(\R^d)]^M\to [C^0_1(\R^d)]^M$ defined as follows:  for any given $u$, $Q^\rho u$ is defined as the unique solution to the following  equations:
\bb\l{eq:Q_rho}
\sup_{\alpha\in\cA_i}\mathcal{L}^{\alpha}_i(x,Q^\rho u(x),D(Q^\rho u)_i(x),D^2(Q^\rho u)_i(x))+\rho((Q^\rho u)_i-\cM_iu)^+(x)=0, \q i\in \cI.
\ee

We now present some important properties of the operator $Q^\rho$.  Suppose (H.\ref{assum:regularity}) and (H.\ref{assum:M_lip}) hold, then for any given $u\in [C^0_1(\R^d)]^M$, we see $\cM u\in [C^0_1(\R^d)]^M$, from which one can establish   the comparison principle of \eqref{eq:Q_rho} and the well-posedness of \eqref{eq:Q_rho} in the class of bounded continuous functions.

The following lemma strengthens the comparison principle by  indicating that $Q^\rho$ is monotone and  Lipschitz continuous with constant 1, which is essential  for the error estimates in Section \ref{sec:rate}. The proof is included in Appendix \ref{appendix}.

\begin{Lemma}\l{lemma:Qrho_lip}
Suppose  (H.\ref{assum:regularity}) and (H.\ref{assum:M_lip}) hold. Then for any $u,v\in [C^0_1(\R^d)]^M$, we have that
$$\sup_{i\in\cI}\big|\big((Q^\rho u)_i-(Q^\rho v)_i\big)^+\big|_0\le\sup_{i\in\cI} \left|(u_i-v_i)^+\right|_0.$$
Consequently, if $u\le v$, then $Q^\rho u\le Q^\rho v$.
\end{Lemma}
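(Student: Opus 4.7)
The plan is, setting $M\coloneqq \sup_{i\in\cI}|(u_i-v_i)^+|_0\ge 0$, to show that the shifted function $\tilde w\coloneqq Q^\rho v+M\mathbf{1}$ (with $\mathbf{1}=(1,\dots,1)\in \R^M$) is a bounded viscosity supersolution of the penalized HJB system \eqref{eq:Q_rho} with obstacle $\cM u$. A comparison principle for \eqref{eq:Q_rho} with a fixed Lipschitz obstacle then forces $Q^\rho u\le \tilde w$ pointwise; taking the positive part componentwise gives the first assertion, and the monotonicity claim is the special case $M=0$.

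Two elementary properties of the data drive the proof. First, the explicit form \eqref{eq:M_impulse} yields the constant-shift identity $\cM_i(w+c\mathbf{1})=\cM_i w + c$ for any $c\in\R$ and any bounded Lipschitz $w$; combined with the monotonicity of $\cM_i$ (Lemma \ref{Lemma:cM}(2)) and the pointwise bound $u_j\le v_j+M$ for every $j\in \cI$, this gives
\begin{equation*}
\cM_i u(x)\le \cM_i(v+M\mathbf{1})(x)=\cM_i v(x)+M,\qquad x\in \R^d,\; i\in \cI.
\end{equation*}
Secondly, the linearity of $\mathcal{L}^\a_i$ in its second argument together with the strict monotonicity \eqref{eq:coeff_mono} yield, for every $\a\in\cA_i$ and every $(p,X)\in \R^d\times \bS^d$,
\begin{equation*}
\mathcal{L}^\a_i(x,\tilde w(x),p,X)=\mathcal{L}^\a_i(x,Q^\rho v(x),p,X)+M\Bigl(c^\a_i(x)-\sum_{j\in\cI^{-i}}d^\a_{ij}(x)\Bigr)\ge \mathcal{L}^\a_i(x,Q^\rho v(x),p,X)+M\lambda_0,
\end{equation*}
where $M\ge 0$ is used; the same lower bound persists after taking $\sup_{\a\in \cA_i}$.

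To verify the supersolution property, I would let $\phi\in C^2(\R^d)$ touch $\tilde w_i$ from below at some $x\in \R^d$, note that $\phi-M$ then touches $(Q^\rho v)_i$ from below at $x$, and invoke the viscosity supersolution property of $Q^\rho v$ at $x$ for its defining equation with obstacle $\cM v$. The two displayed estimates above, combined with $(\tilde w_i(x)-\cM_i u(x))^+\ge ((Q^\rho v)_i(x)-\cM_i v(x))^+$ (an immediate consequence of $\cM_i u\le \cM_i v+M$), then yield
\begin{equation*}
\sup_{\a\in\cA_i}\mathcal{L}^\a_i(x,\tilde w(x),D\phi(x),D^2\phi(x))+\rho(\tilde w_i(x)-\cM_i u(x))^+\ge M\lambda_0\ge 0,
\end{equation*}
which is exactly the required supersolution inequality. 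The main technical step is then the comparison principle for \eqref{eq:Q_rho} with a fixed Lipschitz obstacle, but since the penalty term $\rho(s-\Psi)^+$ is Lipschitz and non-decreasing in $s$, it only reinforces the strict monotonicity \eqref{eq:coeff_mono}; the strict-subsolution / doubling-of-variables argument used to prove Proposition \ref{prop:penalty_comparison} in Appendix \ref{appendix} therefore applies essentially verbatim (and is simpler, the obstacle now being a prescribed Lipschitz function rather than part of the unknown), so this step is routine rather than a genuine obstacle.
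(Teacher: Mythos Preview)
Your argument is correct and takes a genuinely different route from the paper's proof. The paper proceeds by a direct doubling-of-variables argument on the difference $(Q^\rho u)_i(x)-(Q^\rho v)_i(y)$, penalised by $\delta|x-y|^2+\gamma|x|^2$: at the maximum point it invokes the maximum principle for semijets, then splits into two cases according to the sign of the difference of the penalty terms, one case reducing to the obstacle estimate $\cM_i u-\cM_i v\le |(u_i-v_i)^+|_0$ and the other to the classical comparison for the HJB part. Your approach instead absorbs the whole argument into a single application of the (already stated) comparison principle for \eqref{eq:Q_rho} with a \emph{fixed} Lipschitz obstacle, by observing that the constant-shift identity $\cM_i(v+M\mathbf 1)=\cM_i v+M$ together with the strict monotonicity \eqref{eq:coeff_mono} makes $Q^\rho v+M\mathbf 1$ a supersolution of the same equation that $Q^\rho u$ solves. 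This is shorter and avoids the case split; the price is that it relies on the comparison principle for \eqref{eq:Q_rho} as a black box, whereas the paper's argument is self-contained and in fact re-derives that comparison as a by-product. Both approaches use (H.\ref{assum:M_lip}) in the same way, namely to guarantee that the obstacles $\cM_i u$, $\cM_i v$ are Lipschitz so that the relevant comparison principle applies.
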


Finally, a straightforward modification of the  doubling arguments for Lemma \ref{lemma:Qrho_lip} enables us to show that under the assumptions (H.\ref{assum:regularity}), (H.\ref{assum:lambda_lip})   and (H.\ref{assum:M_lip}), the solution $Q^\rho u$ to \eqref{eq:Q_rho} is in fact Lipschitz continuous provided that $u$ is Lipschitz continuous, which subsequently implies
the iterates $(u^{\rho, n})_{n\ge 0}$ are well-defined functions in $[C^0_1(\R^d)]^M$. We omit the  proof of these Lipschitz estimates, by pointing out that the analysis for the obstacle part is exactly the same as those for Lemma \ref{lemma:Qrho_lip}, and referring the reader to \cite{briani2012} for a discussion on the HJB part.

We then proceed to study the convergence of the iterates $(u^{\rho, n})_{n\ge 0}$. 
The next lemma shows the sequence $(u^{\rho,n})_{n\in \N}$ is monotone and uniformly bounded.

\begin{Lemma}
Suppose (H.\ref{assum:regularity})--(H.\ref{assum:lambda_lip})   and (H.\ref{assum:M_lip}) hold, and  $\rho>0$ is a fixed penalty parameter. Then the iterates $(u^{\rho,n})_{n\ge 0}$ are monotonically decreasing and uniformly bounded in terms of $n$.
\end{Lemma}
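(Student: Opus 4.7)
The proof plan has two parts: monotonicity is obtained inductively using the monotonicity of $Q^\rho$ (Lemma \ref{lemma:Qrho_lip}) and of $\cM_i$ (Lemma \ref{Lemma:cM}(2)), while the $n$-uniform bound is obtained by sandwiching the iterates between $u^\rho$, the solution of the penalized QVI \eqref{eq:penalty}, and $u^0$, the solution of the pure HJB system \eqref{eq:iter_0}.

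For the monotone decrease, the base case $u^{\rho,0}\ge u^{\rho,1}$ follows because $u^{\rho,0}=u^0$ solves \eqref{eq:iter_0}, so the non-negativity of the penalty term $\rho(u^0_i-\cM_i u^0)^+$ makes $u^0$ a viscosity supersolution of \eqref{eq:Q_rho} with data $u=u^{\rho,0}$; the comparison principle for \eqref{eq:Q_rho} mentioned just before Lemma \ref{lemma:Qrho_lip} then yields $u^{\rho,0}\ge Q^\rho u^{\rho,0}=u^{\rho,1}$. For the inductive step, if $u^{\rho,n-1}\ge u^{\rho,n}$, Lemma \ref{Lemma:cM}(2) gives $\cM_i u^{\rho,n-1}\ge \cM_i u^{\rho,n}$, and then the monotonicity of $Q^\rho$ (Lemma \ref{lemma:Qrho_lip}) delivers $u^{\rho,n}=Q^\rho u^{\rho,n-1}\ge Q^\rho u^{\rho,n}=u^{\rho,n+1}$.

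For the uniform $n$-bound, the upper bound $u^{\rho,n}\le u^0$ follows immediately from the monotonicity just established together with the boundedness of $u^0$. For the lower bound, I would prove $u^{\rho,n}\ge u^\rho$ by induction, where $u^\rho$ is bounded by Proposition \ref{prop:penalty_comparison}. At $n=0$, $\sup_{\a\in\cA_i}\cL^\a_i u^0=0$ and the non-negative penalty term preserve the $\ge 0$ inequality, so $u^0$ is a supersolution of \eqref{eq:penalty} and the comparison principle of Proposition \ref{prop:penalty_comparison} yields $u^{\rho,0}=u^0\ge u^\rho$. Assuming $u^{\rho,n-1}\ge u^\rho$, Lemma \ref{Lemma:cM}(2) gives $\cM_i u^{\rho,n-1}\ge \cM_i u^\rho$, whence, at any local maximum point of $u^\rho_i-\phi$,
\begin{equation*}
\sup_{\a\in\cA_i}\cL^\a_i(x,u^\rho,D\phi,D^2\phi)+\rho(u^\rho_i-\cM_i u^{\rho,n-1})^+(x)\le \sup_{\a\in\cA_i}\cL^\a_i(x,u^\rho,D\phi,D^2\phi)+\rho(u^\rho_i-\cM_i u^\rho)^+(x)\le 0,
\end{equation*}
so that $u^\rho$ is a viscosity subsolution of \eqref{eq:Q_rho} with data $u=u^{\rho,n-1}$. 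The comparison principle for \eqref{eq:Q_rho} then gives $u^\rho\le Q^\rho u^{\rho,n-1}=u^{\rho,n}$, completing the induction.

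The only delicate point is the transfer of sub/supersolution status when modifying the penalty obstacle; this relies on the pointwise (zeroth-order) dependence of the penalty on $u_i$ and on the monotonicity of $\cM_i$, both already in hand, so I expect no genuine obstacle beyond a careful invocation of Definition \ref{def:sol}. Chaining the two comparisons yields $u^\rho\le u^{\rho,n}\le u^0$ for every $n\ge 0$, which is the claimed uniform bound.
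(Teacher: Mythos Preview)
Your proof is correct and follows essentially the same approach as the paper: the monotone decrease is obtained from the base case $u^{\rho,0}\ge u^{\rho,1}$ (via comparison for \eqref{eq:Q_rho}) together with the monotonicity of $Q^\rho$, and the uniform lower bound comes from the induction $u^{\rho,n}\ge u^\rho$ using that $u^\rho$ is a subsolution of \eqref{eq:Q_rho} with data $u^{\rho,n-1}$ whenever $u^{\rho,n-1}\ge u^\rho$. Your write-up is slightly more explicit about the viscosity inequalities and the upper bound $u^{\rho,n}\le u^0$, but the logical structure is identical.
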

\begin{proof}
Note that the comparison principle of  \eqref{eq:Q_rho} yields  $u^{\rho,0}\ge u^{\rho,1}$,  which together with the monotonicity of $Q^\rho$ leads to  $u^{\rho,n-1}\ge u^{\rho,n}$ for all $n\ge 1$. 
Now we show by induction that $u^{\rho,n}\ge u^\rho$ for all $n\ge 0$, where $u^\rho$ is the solution to \eqref{eq:penalty}.
The statement holds clearly for $n=0$. Suppose for some $n\in \N$, we have $u^{\rho,n-1}\ge u^\rho$, then Lemma \ref{Lemma:cM} (2) implies $\cM_iu^{\rho,n-1}\ge \cM_i u^\rho$ for all $i\in \cI$, and hence $\rho(u^\rho-\cM_iu^{\rho})^+\ge \rho(u^\rho-\cM_iu^{\rho,n-1})^+$, which implies $u^\rho$ is a subsolution of the equation for $u^{\rho,n}$. Consequently, we obtain from the comparison principle that $u^{\rho,n}\ge  u^\rho$.
\end{proof}

The next theorem presents the convergence of $(u^{\rho, n})_{n\ge 0}$ to the solution of \eqref{eq:penalty}.

\begin{Theorem}\l{thm:conv_Q_rho}
Suppose  (H.\ref{assum:regularity})--(H.\ref{assum:lambda_lip}) and (H.\ref{assum:M_lip}) hold. 
Then for any given $\rho\ge 0$,  the  iterates $(u^{\rho,n})_{n\ge 0}$  converge monotonically from above to the solution $u^\rho$  of \eqref{eq:penalty} as $n\to \infty$.
\end{Theorem}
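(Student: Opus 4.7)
The plan is to adapt the half-relaxed limit argument from the proof of Theorem \ref{thm:conv_penalty}, with $\rho$ now fixed and $n\to\infty$ playing the role of $\rho\to\infty$. By the preceding lemma, the sequence $(u^{\rho,n})_{n\ge 0}$ is monotonically decreasing, uniformly bounded, and satisfies $u^{\rho,n}\ge u^\rho$ pointwise; hence the pointwise limit $\tilde u^\rho(x)\coloneqq \lim_{n\to\infty}u^{\rho,n}(x)$ exists and dominates $u^\rho$. The task is to identify $\tilde u^\rho$ with $u^\rho$ and then upgrade the pointwise convergence to locally uniform convergence.

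The main step is to show that the component-wise half-relaxed limits $\bar u(x)\coloneqq\limsup_{y\to x,\,n\to\infty}u^{\rho,n}(y)$ and $\underline u(x)\coloneqq\liminf_{y\to x,\,n\to\infty}u^{\rho,n}(y)$ are, respectively, a viscosity subsolution and a viscosity supersolution of the penalized equation \eqref{eq:penalty}. This follows the same semijet argument as in Theorem \ref{thm:conv_penalty}: given $(p,X)\in J^{2,+}\bar u_i(x)$, \cite[Lemma~6.1]{crandall1992} produces $(x^n,p^n,X^n)$ with $(p^n,X^n)\in J^{2,+}u^{\rho,n}_i(x^n)$ and $(x^n,u^{\rho,n}_i(x^n),p^n,X^n)\to(x,\bar u_i(x),p,X)$, and one then passes to the liminf in
\[
\cL^{\alpha}_i\bigl(x^n,u^{\rho,n}(x^n),p^n,X^n\bigr)+\rho\bigl(u^{\rho,n}_i-\cM_iu^{\rho,n-1}\bigr)^+(x^n)\le 0, \quad \alpha\in\cA_i.
\]
The HJB part is handled as in \eqref{eq:liminf_conv}, while the penalty part is controlled using continuity and monotonicity of $r\mapsto r^+$ together with Lemma \ref{Lemma:cM}(3) applied to the sequence $(u^{\rho,n-1})_{n\ge 1}$; since the latter is merely a single-index shift of $(u^{\rho,n})_{n\ge 0}$, its half-relaxed limits coincide with $\bar u$ and $\underline u$, giving $\limsup_n\cM_iu^{\rho,n-1}(x^n)\le\cM_i\bar u(x)$ and consequently $\liminf_n\rho(u^{\rho,n}_i-\cM_iu^{\rho,n-1})^+(x^n)\ge\rho(\bar u_i-\cM_i\bar u)^+(x)$. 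The supersolution property of $\underline u$ is obtained by the symmetric construction, introducing $\eta$-optimal controls $\alpha^{n,\eta}\in\cA_i$ as in \eqref{eq:liminf_conv2} and using Lemma \ref{Lemma:cM}(3) in the lower form $\liminf_n\cM_iu^{\rho,n-1}(x^n)\ge\cM_i\underline u(x)$.

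The comparison principle of Proposition \ref{prop:penalty_comparison} then forces $\bar u\le\underline u$, and combining with the trivial $\underline u\le\bar u$ yields $\bar u=\underline u=\tilde u^\rho$, so the limit is continuous; uniqueness for \eqref{eq:penalty} identifies this limit with $u^\rho$, and Dini's theorem promotes the monotone pointwise convergence of continuous iterates to a continuous limit to locally uniform convergence. The main obstacle is passing to the limit through the nonsmooth penalty term $\rho(\cdot-\cM_iu^{\rho,n-1})^+$ in which the \emph{lagged} iterate $u^{\rho,n-1}$ appears; the key observation that reduces this to a routine adaptation of Theorem \ref{thm:conv_penalty}, rather than a genuinely new difficulty, is that a single-index shift does not alter half-relaxed limits, so Lemma \ref{Lemma:cM}(3) applies directly to the lagged sequence.
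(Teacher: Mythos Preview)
Your proposal is correct and follows essentially the same approach as the paper: both arguments pass to the half-relaxed limits of $(u^{\rho,n})_{n\ge 0}$, use \cite[Lemma~6.1]{crandall1992} to produce approximating semijets, handle the lagged penalty term via Lemma~\ref{Lemma:cM}(3) (implicitly relying on the fact that shifting the index by one does not change the half-relaxed limits), and conclude by the comparison principle of Proposition~\ref{prop:penalty_comparison}. The only minor difference is that in the supersolution step the paper makes an explicit case distinction on whether $m\coloneqq\limsup_n(u^{\rho,n}_i-\cM_iu^{\rho,n-1})^+(x^n)$ is zero or positive, whereas you absorb this into the monotonicity and continuity of $r\mapsto r^+$; both routes yield the same inequality.
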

\begin{proof}
With the comparison principle of \eqref{eq:penalty} (Proposition \ref{prop:penalty_comparison}) in mind, it remains to show   the component-wise relaxed half-limit  $u^{*,\rho}$ (resp.~$u^{\rho}_*)$ of $(u^{\rho,n})_{n\ge 0}$  is a subsolution (resp.~supersolution) to \eqref{eq:penalty}. 
For notational simplicity, we shall omit the dependence on $\rho$ in the subsequent analysis  if no confusion can occur.

 Let $x\in \R^d$, $i\in \cI$ and $(p,X)\in J^{2,+}{u}^*_i(x)$. It follows from \cite[Lemma~6.1]{crandall1992} that there exist   $(x^n,p^n,X^n)_{n\in \N}$ such that $(p^n,X^n)\in J^{2,+}{u}^{\rho, n}_i(x^n)$ for each $n$ and $(x^n,u^{\rho, n}_i(x^n),p^n,X^n)\to (x,{u}^*_i(x),p,X)$ as $n\to\infty$. Then we have for all $n\in \N$ that 
\bb\l{eq:un_sub}
\sup_{\alpha\in\cA_i}\mathcal{L}^{\alpha}_i(x^n,u^{\rho,n}(x^n),p^n,X^n)+\rho(u^{\rho,n}_i-\cM_iu^{\rho,n-1})^+(x^n)\le 0.
\ee

Note that by Lemma \ref{Lemma:cM} (3), we have
\begin{align*}
\liminf_{n\to \infty} (u^{\rho,n}_i-\cM_iu^{\rho,n-1})^+(x^n)&\ge \liminf_{n\to \infty} (u^{\rho,n}_i-\cM_iu^{\rho,n-1})(x^n)
\ge (u^*_i-\cM_iu^{*})(x),
\end{align*}
which implies $\liminf_{n\to \infty}(u^{\rho,n}_i-\cM_iu^{\rho,n-1})^+(x^n)\ge (u^*_i-\cM_iu^{*})^+(x)$. Thus, for any given $\a \in \cA_i$, we can take  limit inferior in \eqref{eq:un_sub} and obtain from the inequality $\liminf_{n\to \infty}-d_{ij}^\a(x^\rho)u^\rho_j(x^\rho)\ge  -d_{ij}^\a(x)u^*_j(x)$ (see \eqref{eq:liminf_conv}) that
\begin{align*}
&\mathcal{L}^{\alpha}_i(x,u^{*}(x),p,X)+\rho(u^{*}_i-\cM_iu^{*})^+(x)\\
&\le\liminf_{n\to\infty} \mathcal{L}^{\alpha}_i(x^n,u^{\rho,n}(x^n),p^n,X^n)+\rho(u^{*}_i-\cM_iu^{*})^+(x)\le 0.
\end{align*}
Then taking the supremum over $\a\in \cA_i$ gives us the desired result.

We then turn to study ${u}_*$ by fixing $x\in \R^d$, $i\in \cI$ and $(p,X)\in J^{2,-}({u}_*)_i(x)$. Let $(x^n,p^n,X^n)_{n\in \N}$ be a sequence  such that $(p^n,X^n)\in J^{2,-}{u}^{\rho,n}_i(x^n)$ for each $n$ and $(x^n,{u}^{\rho,n}_i(x^n),p^n,X^n)\to (x,({u}_*)_i(x),p,X)$ as $n\to\infty$. Then for all $n\in \N$, the supersolution property of $u^{\rho,n}$ implies that
\begin{align*}
&\mathcal{L}^{\alpha_n}_i(x^n,u^{\rho,n}(x^n),p^n,X^n)\\
&=\sup_{\alpha\in\cA_i}\mathcal{L}^{\alpha}_i(x^n,u^{\rho,n}(x^n),p^n,X^n)
\ge -\rho(u^{\rho,n}_i-\cM_iu^{\rho,n-1})^+(x^n),
\end{align*}
for some $\a^n\in \cA_i$.
Then by taking limit superior as $n\to \infty$ on both sides of the above inequality and using similar arguments as  \eqref{eq:liminf_conv2}, we obtain that
\begin{align*}
&\sup_{\alpha\in\cA_i}\mathcal{L}^{\alpha}_i(x,(u_*)_i(x),p,X)\\
&\ge \limsup_{n\to \infty}-\rho(u^{\rho,n}_i-\cM_iu^{\rho,n-1})^+(x^n)\ge -\rho\limsup_{n\to \infty}(u^{\rho,n}_i-\cM_iu^{\rho,n-1})^+(x^n).
\end{align*}
Now it remains to show 
$$m\coloneqq \limsup_{n\to \infty}(u^{\rho,n}_i-\cM_iu^{\rho,n-1})^+(x^n)\le ((u_*)_i-\cM_iu_*)^+(x).$$
We shall assume without loss of generality that $m>0$. Then  by extracting a subsequence, we can further assume 
$u^{\rho,n}_i(x^n)>\cM_iu^{\rho,n-1}(x^n)$ for $n$, and $\lim_{n\to \infty}(u^{\rho,n}_i(x^n)-\cM_iu^{\rho,n-1}(x^n))=m$.
These properties along with Lemma  \ref{Lemma:cM} (3)  yield
$$
m=\limsup_{n\to \infty}(u^{\rho,n}_i(x^n)-\cM_iu^{\rho,n-1}(x^n))\le ({u}_*)_i(x)-\cM_i({u}_*)(x)\le ((u_*)_i-\cM_iu_*)^+(x),
$$
which finishes the proof of the statement that ${u}_*$ is a supersolution of \eqref{eq:penalty}.
\end{proof}

\subsection{Convergence rates of value functions}\l{sec:rate}
In this section, we shall exploit the regularization procedures discussed in Sections \ref{sec:regularize} and \ref{sec:regularization_penalty} to 
estimate the convergence rates of the penalty approximation, depending on the regularity of the coefficients. 

Let us first recall the penalty errors  for the classical obstacle problem, which have been analyzed in \cite{jakobsen2006,reisinger2018} and play an important role in our error estimates. 
To avoid confusion with the solutions to  \eqref{eq:qvi_impulse} and \eqref{eq:penalty}, we shall denote by $v$ the solution to  the obstacle problem \eqref{eq:iter_ob},
 and by $v^\rho$  the solution to the following penalized equation with a given parameter $\rho\ge 0$:
\bb\l{eq:iter_ob_penalty}
\sup_{\alpha\in\cA_i}\mathcal{L}^{\alpha}_i(x,v^\rho(x),Dv^\rho_i(x),D^2v^\rho_i(x))+\rho(v^\rho_i-\Psi_i)^+(x)=0, \q i\in \cI.
\ee

\begin{Proposition}\l{prop:err_ob}
For any given penalty parameter $\rho>0$, let $v$ and $v^\rho$ be the solution to the obstacle problem \eqref{eq:iter_ob} and  the penalized equation  \eqref{eq:iter_ob_penalty}, respectively. Suppose  (H.\ref{assum:regularity}) holds and   $\Psi_i\in C^0_1(\R^d)$  for all $i\in \cI$. 
Then there exists a constant $C$, independent of $(\Psi_i)_{i\in\cI}$, such that 
\bb\l{eq:half}
0\le v^\rho_i(x)-v_i(x)\le C\left(\sup_{i\in \cI}|\Psi_i|_1\right)\rho^{-1/2}, \q x\in\R^d, i\in \cI.
\ee
 If, in addition, $\Psi_i$  is semiconcave  for all $i\in\cI$, then we have
\bb\l{eq:1st}
0\le v^\rho_i(x)-v_i(x)\le C\sup_{i\in\cI}\bigg(|\Psi_i|_1+[\Psi_i]_{2,+}\bigg)/\rho,  \q x\in\R^d, i\in \cI.
\ee 
\end{Proposition}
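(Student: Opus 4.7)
The plan is to prove both bounds by comparison against a suitably modified version of $v^\rho$ that serves as a sub- or supersolution of the unpenalized obstacle problem \eqref{eq:iter_ob}. The lower bound $v \le v^\rho$ is immediate: since $v$ satisfies $v_i \le \Psi_i$ pointwise, one has $(v_i-\Psi_i)^+ \equiv 0$, so combined with the HJB subsolution inequality, $v$ is a viscosity subsolution of the penalized equation \eqref{eq:iter_ob_penalty}. Comparison for the penalized system (a direct analogue of Proposition \ref{prop:penalty_comparison}) yields $v\le v^\rho$.

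For the upper bound in the Lipschitz case \eqref{eq:half}, I would proceed in three steps. First, a doubling-of-variables argument essentially identical to the proof of Proposition \ref{prop:ob_Lip}, with the monotone positive-part term $\rho(v^\rho-\Psi)^+$ absorbed into the zeroth-order structure, yields a Lipschitz bound $\sup_i[v^\rho_i]_1 \le C(1+\sup_i[\Psi_i]_1)$ \emph{uniform in} $\rho$. Second, I would regularize by sup-convolution at scale $\eps>0$,
$$
\tilde v^{\rho,\eps}_i(x) = \sup_{y\in\R^d}\big\{v^\rho_i(y) - |x-y|^2/(2\eps)\big\},
$$
obtaining a semiconvex approximation with $0\le \tilde v^{\rho,\eps}_i - v^\rho_i \le C[v^\rho_i]_1^2\,\eps$ and second-derivative bound $1/\eps$. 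The classical theory of sup-convolutions of viscosity subsolutions shows that $\tilde v^{\rho,\eps}$ is a subsolution of \eqref{eq:iter_ob_penalty} with coefficients perturbed by a modulus $\omega(\sqrt{\eps})$. Since $\tilde v^{\rho,\eps}$ is twice differentiable almost everywhere with $|D^2\tilde v^{\rho,\eps}|_0 \le 1/\eps$, the HJB part $\sup_\a \cL^\a_i \tilde v^{\rho,\eps}$ is pointwise bounded by $C/\eps$, so the penalized equation gives $\rho(\tilde v^{\rho,\eps}_i - \Psi_i)^+ \le C/\eps + \omega(\sqrt\eps)$. Combined with the sup-convolution error this yields $v^\rho_i \le \Psi_i + C(\eps + (\rho\eps)^{-1})$ pointwise.

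Third, the shifted function $w_i := v^\rho_i - C(\eps+(\rho\eps)^{-1})$ satisfies both the obstacle constraint $w_i \le \Psi_i$ and, up to an error absorbed by the strict positivity \eqref{eq:coeff_mono}, the HJB inequality; hence $w$ is a subsolution of \eqref{eq:iter_ob}. Comparison for \eqref{eq:iter_ob} then gives $v^\rho \le v + C(\eps+(\rho\eps)^{-1})$, and optimizing $\eps = \rho^{-1/2}$ closes \eqref{eq:half} with the prefactor controlled by $\sup_i|\Psi_i|_1$ through the Lipschitz bound on $v^\rho$.

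For the semiconcave case \eqref{eq:1st}, the additional regularity of $\Psi$ should let me skip the sup-convolution step entirely: either by adapting the doubling-of-variables proof of Proposition \ref{prop:semiconcave_ob} to the penalized equation (the penalty term $\rho(\cdot-\Psi)^+$ is convex and monotone, hence compatible with the semiconcavity argument there), or by passing to the limit $n\to\infty$ in the iterates \eqref{eq:penalty_n} and applying Theorem \ref{thm:iter_regularity}, one obtains a semiconcavity bound $\sup_i[v^\rho_i]_{2,+}\le C\sup_i(|\Psi_i|_1+[\Psi_i]_{2,+})$ uniform in $\rho$. This directly bounds $\sup_\a \cL^\a_i v^\rho$ pointwise, so the penalized equation gives $v^\rho_i\le \Psi_i + C/\rho$, and the same subsolution-construction and comparison argument yields \eqref{eq:1st}. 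I expect the main obstacle to be precisely this uniform-in-$\rho$ semiconcavity estimate for the penalized weakly coupled system, since Proposition \ref{prop:semiconcave_ob} is stated only for \eqref{eq:iter_ob} and one must verify that the extra penalty term preserves the sharp dependence on $[\Psi_i]_{2,+}$ rather than merely $[\Psi_i]_1$.
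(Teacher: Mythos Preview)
Your lower bound $v\le v^\rho$ and your final ``shift down and compare'' step are both correct and match the paper exactly: once you know $\rho(v^\rho_i-\Psi_i)^+\le C_\rho$ for all $i$, the function $v^\rho-C_\rho/\rho$ is a subsolution of \eqref{eq:iter_ob} (the constant shift uses \eqref{eq:coeff_mono}), and comparison yields $v^\rho-v\le C_\rho/\rho$. The gap is in how you bound $C_\rho$.

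In the Lipschitz case your sup-convolution argument has a sign error. The sup-convolution $\tilde v^{\rho,\eps}$ is \emph{semiconvex} with constant $1/\eps$, i.e.\ $D^2\tilde v^{\rho,\eps}\ge -I/\eps$ wherever it is twice differentiable; it does \emph{not} satisfy $|D^2\tilde v^{\rho,\eps}|_0\le 1/\eps$. But the inequality you need is $\sup_\a\cL^\a_i\tilde v^{\rho,\eps}\ge -C/\eps$, and since $\cL^\a_i$ contains $-\tr[a^\a_i X]$ with $a^\a_i\ge 0$, this requires an \emph{upper} bound on $X=D^2\tilde v^{\rho,\eps}$, which sup-convolution never supplies (and inf-convolution, which would give the right sign on $D^2$, does not preserve subsolutions). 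So the step ``$\rho(\tilde v^{\rho,\eps}_i-\Psi_i)^+\le C/\eps$'' fails.

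The paper bypasses this by regularizing the \emph{obstacle} rather than the solution. For $\Psi\in [C^2(\R^d)]^M$, at a maximum point of $v^\rho_i-\Psi_i$ one has $(D\Psi_i,D^2\Psi_i)\in J^{2,+}v^\rho_i$, so the subsolution property of $v^\rho$ for \eqref{eq:iter_ob_penalty} gives directly
\[
\rho\sup_i|(v^\rho_i-\Psi_i)^+|_0\le C\sup_i\big(|\Psi_i|_1+|(D^2\Psi_i)^+|_0\big).
\]
For merely Lipschitz $\Psi$ one mollifies to $\Psi^\eps$ with $|D^2\Psi^\eps|_0\le C[\Psi]_1/\eps$ and $|\Psi-\Psi^\eps|_0\le C[\Psi]_1\eps$, then balances $\eps=\rho^{-1/2}$ to obtain \eqref{eq:half}. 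For semiconcave $\Psi$, mollification preserves the one-sided bound $(D^2\Psi^\eps)^+\le [\Psi]_{2,+}$ \emph{uniformly in $\eps$}, so no balancing is needed and \eqref{eq:1st} follows. This is considerably shorter than your proposed route through a uniform-in-$\rho$ semiconcavity estimate for $v^\rho$; note also that your suggested appeal to Theorem~\ref{thm:iter_regularity} does not help, since that bound grows linearly in the iteration index $n$ and does not survive the limit $n\to\infty$.
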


\begin{proof}
The statement extends the results for scalar HJB equations studied in \cite{jakobsen2006}, and can be established by using similar arguments. The main step is to observe that for any given constant $C_\rho$ satisfying $C_\rho\ge \rho (v^\rho_i-\Psi_{i})^+$ for all $i\in \cI$, $v^\rho-C_\rho/\rho$ is a subsolution to \eqref{eq:iter_ob}, which implies $v^\rho-v\le C_\rho/\rho$. Then if we suppose that  $\Psi\in [C^2(\R^d)]^M$, one can deduce that 
there exists a constant $C>0$, independent of $\rho$ and $(\Psi_i)_{i\in \cI}$, such that the  upper bound 
$C_\rho \le C\sup_i\big(|\Psi_i|_1+|(D^2\Psi_i)^+|_0)$ holds for all $\rho\ge 0$, which enables us to conclude  \eqref{eq:1st} for smooth obstacles. Finally,  we  can regularize a general nonsmooth obstacle with mollifiers, and balance the approximation errors to obtain the desired error estimates \eqref{eq:half} and \eqref{eq:1st}.
\end{proof}

We then present the following elementary lemma, which extends \cite[Lemma 6.1]{bonnans2007} to polynomials with higher degrees.  The proof follows from a straightforward computation, which is included  in  Appendix \ref{appendix} for completeness.
\begin{Lemma}\l{lemma:min}
For any given $\a>0$, $\mu\in (0,1)$ and $\gamma\in \N$, consider the function  $\phi^\a:(0,\infty)\to \R$, $\phi^\a(x)=\a x^\gamma+\mu^x$. Then there exists a constant $C>0$, depending only on $\gamma$ and $\mu$, such that 
$$
m^\a \coloneqq \min_{n\in \N}\phi^\a(n)\le C\a(-\log \a)^\gamma, \q \textnormal{as $\a\to 0$.}
$$
\end{Lemma}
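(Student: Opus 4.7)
The plan is to balance the two contributions in $\phi^\alpha(x) = \alpha x^\gamma + \mu^x$ by choosing a candidate integer of order $|\log \alpha|$, at which the polynomial and exponential terms have roughly comparable size. Concretely, set $\beta := -\log \mu > 0$, which is positive since $\mu \in (0,1)$, and define
\[
n^\star(\alpha) := \left\lceil \beta^{-1}(-\log\alpha)\right\rceil.
\]
For $\alpha$ sufficiently small (say $\alpha \le \mu$, which forces $-\log\alpha \ge \beta$), this is a well-defined positive integer, and by definition of $m^\alpha$ we have $m^\alpha \le \phi^\alpha(n^\star)$.

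Next, I would bound the two summands of $\phi^\alpha(n^\star)$ separately. For the exponential term, the monotonicity of $n \mapsto \mu^n$ together with the choice of $n^\star$ gives
\[
\mu^{n^\star} \le \mu^{\beta^{-1}(-\log\alpha)} = e^{(\log\mu)\beta^{-1}(-\log\alpha)} = e^{\log\alpha} = \alpha.
\]
For the polynomial term, since $n^\star \le \beta^{-1}(-\log\alpha) + 1 \le 2\beta^{-1}(-\log\alpha)$ whenever $-\log\alpha \ge \beta$, one has
\[
\alpha (n^\star)^\gamma \le (2/\beta)^\gamma \alpha (-\log\alpha)^\gamma.
\]

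Combining these two estimates and using the trivial bound $\alpha \le \alpha(-\log\alpha)^\gamma$ whenever $-\log\alpha \ge 1$ (i.e.\ $\alpha \le e^{-1}$), one obtains
\[
m^\alpha \le \phi^\alpha(n^\star) \le \big(1 + (2/\beta)^\gamma\big)\,\alpha(-\log\alpha)^\gamma
\]
for all $\alpha \le \min(\mu, e^{-1})$, with constant depending only on $\gamma$ and $\mu = e^{-\beta}$, which is the claim. I do not anticipate any real obstacle: the argument is a routine one-parameter balancing exercise, and the logarithmic factor arises naturally from the location of the balance point $x^\star \approx \beta^{-1}|\log\alpha|$ at which the polynomial $\alpha x^\gamma$ and the exponential $\mu^x$ become comparable.
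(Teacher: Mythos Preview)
Your argument is correct and follows essentially the same strategy as the paper's proof: choose a candidate integer $n$ of order $(-\log\alpha)/(-\log\mu)$ and evaluate $\phi^\alpha$ there. The paper picks $n^\alpha = \lceil \log(-\alpha\gamma/\log\mu)/\log\mu \rceil$, motivated by locating where $(\phi^\alpha)'$ changes sign, but then bounds $m^\alpha \le \phi^\alpha(n^\alpha)$ and estimates the two summands just as you do; your choice $n^\star = \lceil \beta^{-1}(-\log\alpha)\rceil$ is asymptotically the same, and your version is slightly more direct since the derivative analysis is not actually needed for the upper bound.
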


Now we are ready to state the main result of this paper, which gives an upper bound of the penalization error $u^\rho-u$.
\begin{Theorem}\l{thm:error_impulse}
Let $u$ and $u^\rho$ solve the QVI \eqref{eq:qvi_impulse} and  the penalized problem  \eqref{eq:penalty}, respectively. If (H.\ref{assum:regularity})--(H.\ref{assum:lambda_lip}) and (H.\ref{assum:M_lip}) hold, then for all  large enough penalty parameter $\rho$, we have
\bb\l{eq:penalty_half}
0\le u^\rho_i(x)-u_i(x)\le C(\log \rho)^2\rho^{-1/2}, \q x\in\R^d,\,  i\in \cI.
\ee
If we further assume (H.\ref{assum:regularity_concave}) and (H.\ref{assum:M_concave}) hold, and the constant $\lambda_0$ in (H.\ref{assum:regularity}) is sufficiently large, then 
\bb\l{eq:penalty_1st}
0\le u^\rho_i(x)-u_i(x)\le C(\log \rho)^2/\rho, \q x\in\R^d, \, i\in \cI,
\ee
for some constant  $C$  independent of the  parameter $\rho$.
\end{Theorem}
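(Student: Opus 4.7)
The plan is to compare $u^\rho$ and $u$ through the two auxiliary sequences built in Sections~\ref{sec:regularize} and~\ref{sec:regularization_penalty}: the iterated optimal stopping approximations $(u^n)_{n\ge 0}$ of the QVI~\eqref{eq:qvi_impulse} and the iterates $(u^{\rho,n})_{n\ge 0}$ of the penalized equation~\eqref{eq:penalty}. Since $u^\rho\le u^{\rho,n}$ by Theorem~\ref{thm:conv_Q_rho} and $u\le u^n$ by Proposition~\ref{prop:iter_conv}, we have the basic sandwich
\[
0 \le u^\rho_i(x)-u_i(x) \le \big(u^{\rho,n}_i(x)-u^n_i(x)\big) + \big(u^n_i(x)-u_i(x)\big), \quad i\in\cI,\ x\in\R^d,
\]
for every $n\ge 0$. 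Proposition~\ref{prop:iter_conv} already bounds the second term by $C(1-\mu)^n$, so the essential task is to control $\epsilon_n \coloneqq \sup_{i\in\cI}\big|(u^{\rho,n}_i-u^n_i)^+\big|_0$ for a well-chosen $n$.

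To estimate $\epsilon_n$ I would insert an intermediate auxiliary solution $\tilde{u}^{\rho,n} = Q^\rho u^{n-1}$, that is, the solution of the penalized problem~\eqref{eq:Q_rho} whose obstacle is the same as the one appearing in the definition~\eqref{eq:iter_n} of $u^n$. Since by construction $u^{\rho,n} = Q^\rho u^{\rho,n-1}$, a triangle inequality in the positive-part sup-norm splits $\epsilon_n$ into two pieces. The first, $\sup_i\big|(u^{\rho,n}_i-\tilde{u}^{\rho,n}_i)^+\big|_0$, is controlled by the non-expansiveness of $Q^\rho$ (Lemma~\ref{lemma:Qrho_lip}) to yield $\epsilon_{n-1}$. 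The second, $\sup_i\big|(\tilde{u}^{\rho,n}_i-u^n_i)^+\big|_0$, is precisely the penalty error for the obstacle problem~\eqref{eq:iter_ob} with obstacle $\Psi=(\cM_i u^{n-1})_{i\in\cI}$, so Proposition~\ref{prop:err_ob} applies directly.

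The quantitative input is Theorem~\ref{thm:iter_regularity}: under the Lipschitz assumptions (H.\ref{assum:regularity})--(H.\ref{assum:lambda_lip}) and (H.\ref{assum:M_lip}) the Lipschitz constant of $u^{n-1}$, and hence (by (H.\ref{assum:M_lip})) of $\cM_i u^{n-1}$, is $O(n)$; under the additional assumptions (H.\ref{assum:regularity_concave}) and (H.\ref{assum:M_concave}) (with $\lambda_0$ sufficiently large) the semiconcavity constant is also $O(n)$. Combined with the uniform-in-$n$ boundedness of $(u^n)$, Proposition~\ref{prop:err_ob} then gives
\[
\sup_i\big|(\tilde{u}^{\rho,n}_i-u^n_i)^+\big|_0 \le Cn\rho^{-\alpha}, \quad \textrm{with } \alpha=\tfrac{1}{2} \textrm{ (resp.\ } \alpha=1\textrm{)}.
\]
Since $u^{\rho,0}=u^0$ by construction, $\epsilon_0=0$, and the resulting recursion $\epsilon_n\le \epsilon_{n-1}+Cn\rho^{-\alpha}$ telescopes to $\epsilon_n\le Cn^2\rho^{-\alpha}$.

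Putting everything together, for each $n\in\N$,
\[
0\le u^\rho_i(x)-u_i(x) \le C\big(n^2\rho^{-\alpha}+(1-\mu)^n\big), \quad i\in\cI,\ x\in\R^d.
\]
The last step is to optimize the right-hand side over $n$: this is exactly the setting of Lemma~\ref{lemma:min} with $\gamma=2$ and small parameter of order $\rho^{-\alpha}$, whose minimum is controlled by $C\rho^{-\alpha}(\log\rho)^2$, giving \eqref{eq:penalty_half} and \eqref{eq:penalty_1st}. The main obstacle in this strategy is the interplay between the geometric contraction of the iteration $u^n\to u$ and the polynomial-in-$n$ degradation of the regularity of $u^n$ and $\cM_i u^{n-1}$: the $(\log\rho)^2$ overhead in the final rates is the unavoidable price for this balance, and the $n^2$ (rather than worse) growth of $\epsilon_n$ relies crucially on the sharp linear-in-$n$ regularity bounds furnished by Theorem~\ref{thm:iter_regularity}, which in turn rest on the refined semiconcavity estimate of Proposition~\ref{prop:semiconcave_ob}.
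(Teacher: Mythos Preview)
Your proposal is correct and follows essentially the same approach as the paper's proof: the same decomposition through $u^{\rho,n}$ and $u^n$, the same use of the non-expansiveness of $Q^\rho$ (Lemma~\ref{lemma:Qrho_lip}) together with Proposition~\ref{prop:err_ob} applied to the obstacle $\cM_i u^{n-1}$ to obtain the recursion $\epsilon_n\le \epsilon_{n-1}+Cn\rho^{-\alpha}$, and the same optimization over $n$ via Lemma~\ref{lemma:min}. The only cosmetic difference is that you track the positive-part sup-norm where the paper uses the full sup-norm, which is inconsequential.
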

\begin{proof}
For notational simplicity, in the subsequent analysis, we shall denote by $C$  a generic constant, which is independent of the iterate index $n$ and the penalty parameter $\rho$, and may take a different value at each occurrence.

The monotone convergence (see Theorem \ref{thm:conv_penalty}) of $(u^\rho)_{\rho\ge 0}$ implies that $ u^\rho_i-u_i\ge 0$ for any given $\rho\ge 0$, hence it remains to establish an upper bound of $u^\rho-u$. Note that we have 
$$
u^\rho_i-u_i=(u^{\rho}_i-u^{\rho,n}_i)+(u^{\rho,n}_i-u^{n}_i)+(u_i^n-u_i),  \q i\in \cI, n\ge 0,
$$
where $u^{\rho,n}_i$ and $u^{n}_i$ solve \eqref{eq:penalty_n} and \eqref{eq:iter_n}, respectively. Since  $u^{\rho,n}\ge u^\rho$ for any $n\in \N$ (see Theorem \ref{thm:conv_Q_rho}), we obtain from  Proposition  \ref{prop:iter_conv} that
\bb\l{eq:error_bdd}
u^\rho_i-u_i\le (u^{\rho,n}_i-u^{n}_i)+(u_i^n-u_i)\le | u^{\rho,n}-u^n|_0+C\mu^n,  \q n\in \N,
\ee
for some constants $\mu\in (0,1)$ and $C> 0$.

We now estimate the term $| u^{\rho,n}-u^n|_0$. Since the operator $Q^\rho$ is Lipschitz continuous with constant 1 (see Lemma \ref{lemma:Qrho_lip}), it holds for all $n\in \N$ that
\bb\l{eq:u_rhon-u_n}
| u^{\rho,n}-u^n|_0\le | Q^\rho u^{\rho,n-1}-Q^\rho u^{n-1}|_0+| Q^\rho u^{n-1}-u^{n}|_0\le |  u^{\rho,n-1}- u^{n-1}|_0+| Q^\rho u^{n-1}-u^{n}|_0.
\ee
Then by letting the obstacle $\Psi_{i}=\cM_i u^{n-1}$ for all $i\in\cI$ in \eqref{eq:iter_ob} and using Proposition \ref{prop:err_ob}, we can obtain an upper bound of the last term in \eqref{eq:u_rhon-u_n} depending on the regularity of the iterates $(u^{n})_{n\ge 0}$.

In particular, under the assumptions (H.\ref{assum:regularity})--(H.\ref{assum:lambda_lip})  and (H.\ref{assum:M_lip}), we know from Theorem \ref{thm:iter_regularity} that $u^n$ is Lipschitz continuous with constant $|u^n_i|_1\le Cn$ for all $i\in \cI$ and $n\in \N$. Then by using the estimates \eqref{eq:half}, \eqref{eq:error_bdd} and \eqref{eq:u_rhon-u_n}, we get $u^\rho_i-u_i\le C(n^2\rho^{-1/2}+\mu^n)$ for all $n\in \N$, from which we can conclude \eqref{eq:penalty_half} by applying Lemma \ref{lemma:min} with $\a=\rho^{-1/2}$ and $\gamma=2$.

Similarly, by further assuming (H.\ref{assum:regularity_concave}), (H.\ref{assum:M_concave}), and the constant $\lambda_0$ in (H.\ref{assum:regularity}) is sufficiently large, we obtain from Theorem \ref{thm:iter_regularity} that   $|u^n_i|_1+[u^n_i]_{2,+}\le Cn$ for all $n$, which implies 
$u^\rho_i-u_i\le C(n^2/\rho+\mu^n)$ for all $n\in \N$, and subsequently leads to the desired estimate \eqref{eq:penalty_1st}.
\end{proof}
\begin{Remark}\l{rml:penalty_const_k}
As pointed out in Remark \ref{rmk:const_k},  in the case where the intervention operator satisfies (H.\ref{assum:M_lip}) and (H.\ref{assum:M_concave}) with $C=0$ (e.g.~$\cM_i$ is of the form \eqref{eq:M_common}), we know the iterates $(u^n)_{n\in \N}$ are  uniformly Lipschitz continuous  and uniformly semiconcave with respect to $n$. Therefore, by following the above arguments, we can improve the estimates \eqref{eq:penalty_half} and \eqref{eq:penalty_1st} to $\cO((\log \rho)\rho^{-1/2})$ and $\cO((\log \rho)\rho^{-1})$, respectively.
\end{Remark}

\subsection{Approximation of action regions and optimal impulse controls}\l{sec:region_ctrl}

In this section, we  propose  convergent approximations to the action regions and optimal control strategies of the HJBQVI \eqref{eq:qvi_impulse} based on the penalized equations. Since in general an optimal continuous control strategy may not exist due to the nonsmoothness of value functions, we shall focus on the approximation of optimal impulse controls.

Throughout this section, instead of specifying the precise convergence rates of the penalty schemes, which depend on the regularity of coefficients (see Theorem \ref{thm:error_impulse} and Remark \ref{rml:penalty_const_k}), we shall assume there exists a function $\om: (0,\infty)\to (0,\infty)$ such that $\om(\rho)\to 0$ as $\rho\to \infty$ and  
\bb\l{eq:modulus}
0\le u^\rho_i-u_i\le \om(\rho), \q i\in\cI, \, \rho>0.
\ee

For each $i\in \cI$, we shall approximate the action region of the $i$-th component $\cS_i=\{x\in \R^d\mid u_i(x)-\cM_iu(x)=0\}$ of \eqref{eq:qvi_impulse} by the following sets:
\bb\l{eq:S_rho}
\cS^\rho_i=\{x\in \R^d\mid |u^\rho_i(x)-\cM_iu^\rho(x)|\le \om(\rho)\}, \q \rho>0.
\ee
The next result shows that  $\cS^\rho_i$  converges to $\cS_i$  in  the Hausdorff metric.
\begin{Proposition}\l{prop:region}
Suppose   (H.\ref{assum:regularity}), (H.\ref{assum:impulse}) and the error estimate \eqref{eq:modulus} hold, and let 
$(\cS^\rho_i)_{i\in \cI}$ be the sets defined  in \eqref{eq:S_rho} for each $\rho>0$. Then  $\cS_i\subset \cS^\rho_i$  for all $i\in \cI$ and $\rho>0$. Moreover, it holds for any given compact set $\cK\subset \R^d$ that $\cS^\rho_i\cap \cK$ converges to $\cS_i\cap \cK$  in the Hausdorff metric as $\rho\to \infty$.

\end{Proposition}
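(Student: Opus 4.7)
The plan is to first establish the pointwise inclusion $\cS_i \subset \cS^\rho_i$ as an algebraic consequence of the bound \eqref{eq:modulus} combined with the monotonicity of $\cM_i$, and then to derive the Hausdorff convergence by compactness, using the inclusion for one direction and a sequential contradiction argument for the reverse direction. The key ingredient for both parts will be the upper/lower semicontinuous stability of the intervention operator stated in Lemma \ref{Lemma:cM} (3).

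For the inclusion, fix $x\in \cS_i$, so that $u_i(x)=(\cM_i u)(x)$. By \eqref{eq:modulus} we have $0\le u^\rho_i-u_i\le \omega(\rho)$ everywhere, and by the monotonicity of $\cM_i$ (Lemma \ref{Lemma:cM} (2)) together with the same bound applied inside the infimum defining $\cM_i$, we obtain $0\le \cM_i u^\rho-\cM_i u\le \omega(\rho)$. Writing
\begin{equation*}
u^\rho_i(x)-\cM_i u^\rho(x)=\bigl[u^\rho_i(x)-u_i(x)\bigr]-\bigl[\cM_i u^\rho(x)-\cM_i u(x)\bigr]+\bigl[u_i(x)-\cM_i u(x)\bigr],
\end{equation*}
the last bracket vanishes and the first two lie in $[0,\omega(\rho)]$, so $|u^\rho_i(x)-\cM_i u^\rho(x)|\le \omega(\rho)$, i.e.\ $x\in \cS^\rho_i$. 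This settles the inclusion, which in turn gives $\sup_{y\in\cS_i\cap\cK}\inf_{x\in\cS^\rho_i\cap\cK}|x-y|=0$ and hence controls one half of the Hausdorff distance trivially.

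For the reverse half, I would argue by contradiction: assume there exist $\eps_0>0$, $\rho_k\to\infty$ and $x_k\in\cS^{\rho_k}_i\cap\cK$ with $\mathrm{dist}(x_k,\cS_i\cap\cK)>\eps_0$. Compactness of $\cK$ yields a subsequence (still labelled $x_k$) converging to some $x^\star\in\cK$. Since $\omega(\rho_k)\to 0$ and \eqref{eq:modulus} gives uniform convergence $u^{\rho_k}\to u$, the half-relaxed limits coincide with the continuous function $u$, and Theorem \ref{thm:conv_penalty} yields $u^{\rho_k}_i(x_k)\to u_i(x^\star)$. Applying Lemma \ref{Lemma:cM} (3) along the sequence $(x_k)$ then gives
\begin{equation*}
(\cM_i u)(x^\star)\le \liminf_{k\to\infty}(\cM_i u^{\rho_k})(x_k)\le \limsup_{k\to\infty}(\cM_i u^{\rho_k})(x_k)\le (\cM_i u)(x^\star),
\end{equation*}
so $(\cM_i u^{\rho_k})(x_k)\to (\cM_i u)(x^\star)$. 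Passing to the limit in the defining inequality $|u^{\rho_k}_i(x_k)-\cM_i u^{\rho_k}(x_k)|\le \omega(\rho_k)$ gives $u_i(x^\star)=(\cM_i u)(x^\star)$, so $x^\star\in\cS_i\cap\cK$, contradicting $\mathrm{dist}(x_k,\cS_i\cap\cK)>\eps_0$.

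The main obstacle is the stability of the nonlocal term $\cM_i u^\rho$ along a doubly varying sequence $(\rho_k,x_k)$, since $\cM_i$ is an infimum over a state-dependent set $Z_i(x)$; this is where the continuity of $x\mapsto Z_i(x)$ in the Hausdorff metric and the uniform convergence $u^\rho\to u$ feed into Lemma \ref{Lemma:cM} (3) and make the argument go through. A minor caveat is that if $\cS_i\cap\cK$ is empty the usual Hausdorff distance requires an appropriate convention, but the same contradiction argument shows that $\cS^\rho_i\cap\cK$ must eventually be empty as well, so the convergence still holds in any reasonable extension of the Hausdorff metric to the empty set.
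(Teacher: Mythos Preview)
Your argument is correct and follows essentially the same route as the paper: the inclusion is obtained from \eqref{eq:modulus} together with the monotonicity (and constant-shift) property of $\cM_i$, and the reverse Hausdorff bound is proved by a compactness/contradiction argument showing that any accumulation point $x^\star$ of $x_k\in\cS^{\rho_k}_i\cap\cK$ must lie in $\cS_i$. The only cosmetic difference is that the paper passes to the limit via the continuity of the fixed functions $u_i$ and $\cM_i u$, whereas you invoke Lemma \ref{Lemma:cM} (3) along the varying sequence $(u^{\rho_k},x_k)$; both are valid and lead to the same conclusion.
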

\begin{proof}
The fact that $\cS_i\subset \cS^\rho_i$ follows directly from the estimate \eqref{eq:modulus} and the monotonicity of $\cM_i$ (see Lemma \ref{Lemma:cM} (2)). Hence  it remains to show that for any given compact set $\cK\subset \R^d$, we have $\lim_{\rho\to\infty} \sup_{y\in \cS_i^\rho\cap \cK}\inf_{x\in \cS_i\cap \cK}|y-x|=0$. Suppose it does not hold, then by passing to a subsequence, we know there exists $\eps>0$ and sequences  $y_n\in \cS_i^{\rho_n}\cap \cK$, $\rho_n\to \infty$, such that $y_n\to y^*\in \cK$ and $\inf_{x\in \cS_i\cap \cK}|y^*-x|\ge \eps$, i.e., $y^*\not \in \cS_i$. However, by using the continuity of the functions $u_i$ and $\cM_iu$ (see Lemma \ref{Lemma:cM} (3)), the definition of $\cS_i^\rho$, and the estimate \eqref{eq:modulus}, we can obtain: 
\begin{align*}
(u_i-\cM_iu)(y^*)=&(u_i-\cM_iu)(y^*)-(u_i-\cM_iu)(y_n)\\
&+(u_i-\cM_iu)(y_n)-(u^{\rho_n}_i-\cM_iu^{\rho_n})(y_n) +(u^{\rho_n}_i-\cM_iu^{\rho_n})(y_n)\\
\ge &(u_i-\cM_iu)(y^*)-(u_i-\cM_iu)(y_n)-\om(\rho_n)-\om(\rho_n)\to 0 
\end{align*}
as $n\to \infty$, which along with the fact $u_i\le \cM_iu$ on $\R^d$ implies $y^*\in \cS_i$, and hence leads to  a contradiction.
\end{proof}
\begin{Remark}\l{rmk:region}
It is essential to include the modulus of convergence $\om$ in the definition of $\cS^\rho_i$, since in general the naive approximation $\tilde{\cS}^\rho_i=\{x\in \R^d\mid u^\rho_i(x)-\cM_iu^\rho(x)=0\}$  does not give a convergent approximation to the action region $\cS_i$. For example,  let the vector $v=(v_l)_{l\in \{1,2\}}$ solve  the following discrete QVI:
$$
\max(v_1-b,\, v_1-(v_2+c))=0,\q \textnormal{and}\q \max(v_2-2b,\, v_2-(v_1+c))=0,
$$
where $b>c>0$. It is clear that the solution is given by $v_1=b$ and $v_2=b+c$, and the action region is the second index, i.e., $\cS=\{2\}$. However, for each $\rho>0$, one can directly verify that $v^\rho_1=b$ and $v^\rho_2=b+c+\f{b-c}{1+\rho}$ solve the penalized equation:
$$
v^\rho_1-b+\rho( v^\rho_1-v^\rho_2-c)^+=0,\q \textnormal{and}\q v^\rho_2-2b+ \rho(v^\rho_2-v^\rho_1-c)^+=0,
$$
which implies that $\tilde{\cS}^\rho=\emptyset$ for all $\rho$. 
\end{Remark}

Now we proceed to study optimal impulse control strategies. For any given $u\in [C^0(\R^d)]^M$, we denote by
$\cZ^u_i(x)\coloneqq \argmin_{z\in Z(x)}[u_i(\Gamma_i(x,z)) + K_i(x,z) ]$ the set of optimal impulse control strategies for all $i\in\cI$ and $x\in \cS_i$. 
The following result constructs a convergent approximation of $\cZ^u_i$, based on the set of impulse controls $\cZ^{u^\rho}_i(x)$, $x\in \cS^\rho_i$,   obtained by the penalized solution $u^\rho$.
\begin{Theorem}\l{thm:ctrl_approx}
Suppose the assumptions of Proposition \ref{prop:region} hold. Then for any $i\in \cI$, $x\in \cS_i$, and  sequence of impulse controls $(z^\rho)_{\rho>0}$ satisfying $z^\rho\in \cZ^{u^\rho}_i(x)$ for all $\rho$, we have 
$$\lim_{\rho\to \infty} \bar{d}_Z(z^\rho,\cZ^u_i(x))\coloneqq \lim_{\rho\to \infty} \inf\{d_{\boldsymbol{Z}}(z^\rho,z)\mid z\in \cZ^u_i(x)\}=0,$$ 
where $(\boldsymbol{Z},d_{\boldsymbol{Z}})$ is the metric space in (H.\ref{assum:impulse}). Consequently, if $\cZ^u_i(x)$ is a singleton, then $\cZ^{u^\rho}_i(x)$ converges to $\cZ^{u}_i(x)$ in the Hausdorff metric as $\rho\to \infty$.
\end{Theorem}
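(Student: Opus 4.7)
The plan is to argue by contradiction via a closed-graph/compactness argument for argmin correspondences. Fix $i\in\cI$ and $x\in\cS_i$, and suppose the first assertion fails: there exist $\eps>0$ and a sequence $\rho_n\to\infty$ with $z^{\rho_n}\in\cZ^{u^{\rho_n}}_i(x)$ and $\bar{d}_Z(z^{\rho_n},\cZ^u_i(x))\ge\eps$ for all $n$. Since $Z_i(x)\subset\boldsymbol{Z}$ is compact by (H.\ref{assum:impulse}), I would extract a further subsequence (not relabelled) along which $z^{\rho_n}\to z^*\in Z_i(x)$. The target is then to show $z^*\in\cZ^u_i(x)$, which immediately contradicts $d_{\boldsymbol{Z}}(z^{\rho_n},z^*)\to 0$.

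The identification of $z^*$ proceeds by passing to the limit in the optimality relation
\begin{equation*}
u^{\rho_n}_i(\Gamma_i(x,z^{\rho_n}))+K_i(x,z^{\rho_n})=(\cM_i u^{\rho_n})(x).
\end{equation*}
The hypothesis \eqref{eq:modulus} gives $\sup_{y\in\R^d}|u^{\rho_n}_j(y)-u_j(y)|\le\om(\rho_n)\to 0$ for every $j\in\cI$, so $u^\rho\to u$ uniformly on $\R^d$; combined with continuity of $\Gamma_i$ and $K_i$ from (H.\ref{assum:impulse}) and continuity of $u_i$ (Theorem \ref{thm:conv_penalty} and Proposition \ref{prop:impulse_comparison}), the left-hand side converges to $u_i(\Gamma_i(x,z^*))+K_i(x,z^*)$. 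For the right-hand side I would invoke Lemma \ref{Lemma:cM}(3) with the trivial sequence $x^{\rho_n}\equiv x$: uniform convergence forces $u^*=u_*=u$, hence $(\cM_i u^{\rho_n})(x)\to(\cM_i u)(x)$. The resulting identity
\begin{equation*}
u_i(\Gamma_i(x,z^*))+K_i(x,z^*)=(\cM_i u)(x)=\min_{z\in Z_i(x)}\{u_i(\Gamma_i(x,z))+K_i(x,z)\}
\end{equation*}
exhibits $z^*$ as an element of $\cZ^u_i(x)$, delivering the desired contradiction.

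For the singleton case $\cZ^u_i(x)=\{z^*\}$, I would first note that $\cZ^{u^\rho}_i(x)$ is non-empty for each $\rho$, since $Z_i(x)$ is compact and the penalized objective is continuous. The first part, applied to an arbitrary selection $\tilde z^\rho\in\cZ^{u^\rho}_i(x)$, forces $\tilde z^\rho\to z^*$; a standard subsequence extraction then shows that in fact $\sup_{z\in\cZ^{u^\rho}_i(x)}d_{\boldsymbol{Z}}(z,z^*)\to 0$, and together with non-emptiness this yields vanishing of both one-sided Hausdorff distances, hence Hausdorff convergence.

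The main obstacle I anticipate is the justification of the two limit passages $u^{\rho_n}_i(\Gamma_i(x,z^{\rho_n}))\to u_i(\Gamma_i(x,z^*))$ and $(\cM_i u^{\rho_n})(x)\to(\cM_i u)(x)$: the former crucially uses the \emph{uniform} error bound \eqref{eq:modulus} (pointwise convergence would not suffice because the evaluation point $\Gamma_i(x,z^{\rho_n})$ is itself moving), while the latter relies on the Hausdorff continuity of $x\mapsto Z_i(x)$ baked into Lemma \ref{Lemma:cM}(3). Once these two ingredients are secured the remainder of the argument is a routine closed-graph argument.
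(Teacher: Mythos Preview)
Your proof is correct and follows a standard closed-graph argument for argmin correspondences: extract a subsequential limit $z^*$ of the approximate optimizers using compactness of $Z_i(x)$, then pass to the limit in the optimality identity using uniform convergence \eqref{eq:modulus} together with continuity of $u_i$, $\Gamma_i$, $K_i$. The limit passages you flagged as potential obstacles are indeed justified exactly as you describe, and the singleton case follows immediately.

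The paper proceeds slightly differently. Rather than extracting a convergent subsequence of $(z^\rho)$, it works with the fixed compact set $Z_\eps(x)=\{z\in Z_i(x)\mid \bar d_Z(z,\cZ^u_i(x))\ge\eps\}$ and picks a minimizer $z_\eps$ of the \emph{limiting} objective $u_i(\Gamma_i(x,\cdot))+K_i(x,\cdot)$ over this set. Since $z_\eps\notin\cZ^u_i(x)$, there is a fixed gap $c_0>0$ between the value at $z_\eps$ and the optimal value; because every $z^\rho\in Z_\eps(x)$, the limiting objective at $z^\rho$ exceeds the optimum by at least $c_0$, and a direct estimate using \eqref{eq:modulus} bounds this same difference by $\om(\rho)\to 0$, a contradiction. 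Your argument is arguably the more natural one from the viewpoint of set-valued analysis, while the paper's has a more quantitative flavour (a fixed gap versus a moving limit point), though here that does not yield any extra information. Both routes use the same ingredients---compactness of $Z_i(x)$, continuity of the data, and the uniform bound \eqref{eq:modulus}---and are of comparable length.
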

\begin{proof}
Suppose there exists $i\in \cI$, $x\in \cS_i$,  and a sequence  $(z^\rho)_{\rho>0}$ satisfying $z^\rho\in \cZ^{u^\rho}_i(x)$ and   $\bar{d}_Z(z^\rho,\cZ^u_i(x))\ge \eps>0$. Now let us consider the compact set $Z_\eps(x)=\{z\in Z(x)\mid \bar{d}_Z(z,\cZ^u_i(x))\ge \eps\}$, and pick ${z}_\eps\in Z_\eps(x)$ such that
$$
u_i(\Gamma_i(x,z_\eps)) + K_i(x,z_\eps)=\min_{z\in Z_\eps(x)}[ u_i(\Gamma_i(x,z)) + K_i(x,z) ].
$$
Since $Z_\eps(x)\cap \cZ^{u}_i(x)=\emptyset$, we can derive from the facts  $z_\eps\not \in \cZ^{u}_i(x)$ and $z^\rho\in Z_\eps(x)$ the following inequality:
\begin{align}\l{eq:contradiction}
\begin{split}
&[u_i(\Gamma_i(x,z^\rho)) + K_i(x,z^\rho)]-[u_i(\Gamma_i(x,\hat{z})) + K_i(x,\hat{z})]\\
\ge&\, [u_i(\Gamma_i(x,z_\eps)) + K_i(x,z_\eps)]-[u_i(\Gamma_i(x,\hat{z})) + K_i(x,\hat{z})]\coloneqq c_0>0,
\end{split}
\end{align}
for some $\hat{z}\in \cZ^{u}_i(x)$. On the other hand, we obtain from the estimate \eqref{eq:modulus} that
\begin{align*}
&[u_i(\Gamma_i(x,z^\rho)) + K_i(x,z^\rho)]-[u_i(\Gamma_i(x,\hat{z})) + K_i(x,\hat{z})]\\
=&\, [u_i(\Gamma_i(x,z^\rho))-u^\rho_i(\Gamma_i(x,z^\rho))]+[u^\rho_i(\Gamma_i(x,z^\rho)) + K_i(x,z^\rho)]-[u_i(\Gamma_i(x,\hat{z})) + K_i(x,\hat{z})]\\
\le  &\,\cM_i u^\rho(x)-\cM_i u(x)\le |u^\rho_i-u_i|_0\le \om(\rho),
\end{align*}
 which contradicts  \eqref{eq:contradiction} by passing $\rho\to \infty$, and finishes the proof.
\end{proof}
\begin{Remark}
We refer the reader to \cite{guo2009,pham2009} and references therein, where the  uniqueness of a pointwise optimal impulse strategy has been established for various practical impulse control problems by  exploiting the regularity of the value functions and the  structure of the intervention operator. Then in Theorem \ref{thm:ctrl_approx} the convergence of the approximate controls follows.
\end{Remark}

\section{Extension to some HJBQVIs with signed  costs}\l{sec:switching}
In this section, we extend the penalty schemes to  a class of QVIs  with possibly negative impulse costs which arise from optimal switching problems.
{In this setting, the controller has two mechanisms of affecting the \emph{regime switching process} $I$ in \eqref{eq:u_i}, namely through their continuous control process $\alpha$ acting on its Markov transition matrix, as well as directly and immediately by exercising an impulse control to change the regime, the latter at the expense of a positive impulse cost or benefitting from a negative impulse cost.}
We shall propose an efficient, alternative penalty scheme by taking advantage of the finiteness of the  set of switching controls, and extend the  convergence analysis in Section \ref{sec:error estimate} to estimate the penalization error. 


More precisely, we consider the following system of HJBQVIs: for each $i\in \cI=\{1,\ldots, M\}$ and $x\in \R^d$,
\begin{align}\l{eq:qvi_s}
\begin{split}
&F_i(x,u,Du_i,D^2u_i)\\
&\coloneqq \max\Big\{
\sup_{\alpha\in\cA_i}\mathcal{L}^{\alpha}_i(x,u(x),Du_i(x),D^2u_i(x)), \   
(u_i-\mathcal{M}_i u)(x)\Big\}=0, 
\end{split}
\end{align}
where the linear operator $\cL^\a_i$ is defined as in \eqref{eq:defLi}, 
and the intervention operator $\cM_i$ is given by
\bb
\label{eq:M_switching}
(\mathcal{M}_i u)(x) = \min_{j\in \cI^{-i} } \{ u_j(x) + k_{ij}(x) \},\q x\in \R^d.
\ee

By enlarging the state space $\R^d$ into the product space $\R^d\t\cI$, we can treat \eqref{eq:M_switching} as a special case of \eqref{eq:M_impulse} with $Z_i(x)=\cI^{-i}$, $\Gamma_i(x,z)=(x,z)$ and $K_i(x,z)=k_{iz}(x)$ for all $(x,i)\in \R^d\t \cI$. Consequently, if the switching costs $k_{ij}$ are strictly positive, i.e., $k_{ij}\ge \kappa_0>0$, we can directly apply  the penalty scheme  \eqref{eq:penalty} to solve \eqref{eq:qvi_s}, and deduce from Theorem \ref{thm:error_impulse} the rate of convergence in terms of the  parameter $\rho$. 

As we shall see shortly, the structure of the operator $\cM_i$ and the finiteness of  the set of impulse controls allow us to   consider signed switching costs $(k_{ij})_j$ taking both positive and negative values.

Now we   introduce an alternative  penalty scheme for solving  \eqref{eq:qvi_s}. For any given penalty parameter $\rho\ge 0$, we consider the following system of  penalized equations: for all $i \in\cI$, $x\in \R^d$,
\begin{align}\l{eq:penalty_s}
\begin{split}
&F^\rho_i(x,u,Du_i,D^2u_i)\\
&\coloneqq\sup_{\alpha\in\cA_i}\mathcal{L}^{\alpha}_i(x,u^\rho(x),Du^\rho_i(x),D^2u^\rho_i(x))+\rho\sum_{j\in\cI^{-i}}(u^\rho_i-u^\rho_j-k_{ij})^+(x)=0.
\end{split}
\end{align}
Unlike   \eqref{eq:penalty}, the above penalty scheme 
makes use of the finiteness of the  set $\cI^{-i}$, and  performs 
penalization on each component of the system, which leads to easily implementable and efficient iterative schemes for the penalized equations without taking the pointwise maximum over all switching components (see \cite{reisinger2018qvi}).

\begin{Remark}\l{rmk:pointwise}
The  penalty scheme \eqref{eq:penalty_s} can be extended to the general intervention operator \eqref{eq:M_impulse}, for which we introduce the following penalty term:
$$
\rho \int_{Z_i(x)}\bigg(u_i(x)- u_i(\Gamma_i(x,z)) - K_i(x,z) \bigg)^+\, \nu(dz), \q \rho\ge 0,
$$
where $\nu$ is a given finite measure supported on the set $\cup_{i,x}Z_i(x)$ (see \cite{kharroubi2010}).
\end{Remark}

In the remaining part of this section,  we shall discuss how  to extend the convergence analysis in the previous sections  to study penalty schemes for \eqref{eq:qvi_s}  with possibly negative switching costs.
We shall focus on the scheme \eqref{eq:penalty_s}, but the same analysis extends naturally to the scheme \eqref{eq:penalty}.
 More precisely, we shall replace (H.\ref{assum:impulse}) by the following condition on the switching costs:
\begin{Assumption}\l{assum:sc}
There exist constants $C\ge 0$ and $\kappa_0>0$ such that for all  $j\not =i,l \in \cI$, we have $k_{ii}\equiv 0$,
\begin{align}\l{eq:triangle}
k_{ij}(x)+k_{jl}(x)&- k_{il}(x)\ge \kappa_0>0, \q  x\in \R^d,
\end{align}
and the following regularity estimates: $|k_{ij}|_1\le C$, and  $k_{ij}$ is semiconcave with  constant $C$ around any point $x\in \R^d$ with $k_{ij}(x)<\kappa_0$.
\end{Assumption}

The allowance of negative switching costs clearly complicates the assumptions on the switching costs, which is worth a detailed discussion.
The   triangular condition \eqref{eq:triangle} is similar to the assumption used in \cite{pham2009,lundstrom2014}, which means that it is less expensive  to switch directly  from regime $i$ to $l$ than in two steps via an intermediate regime $j$. It also implies $k_{ij}+k_{ji}\ge\kappa_0>0$ for all $j\not=i$, which prevents arbitrage opportunities that one can gain a positive profit  by instantaneously switching back and forth. This further leads to the ``no  loop condition" introduced by \cite{ishii1991switching},
%
which together with (H.\ref{assum:regularity}) enables us to conclude a  comparison principle of \eqref{eq:qvi_s} by using similar arguments as those for \cite[Theorem~2.1]{lundstrom2014}, and consequently the uniqueness of viscosity solutions to \eqref{eq:qvi_s} in the class of bounded continuous functions.

The  Lipschitz continuity and semiconcavity assumptions in (H.\ref{assum:sc}) are similar to those in \cite{lundstrom2014}, which 
ensure the existence of  a strict subsolution to \eqref{eq:qvi_s} (see Proposition \ref{prop:subsolution}). However, we remark that, instead of requiring the switching costs to be semiconcave on $\R^d$ as in \cite{lundstrom2014}, we only impose the semiconcavity condition around the  points at which the   costs are close to or less than zero, hence  no additional regularity is required if we are in the classical context of strictly positive switching costs.  

The following proposition explicitly constructs a strict subsolution to \eqref{eq:qvi_s}, which is crucial to the well-posedness
of \eqref{eq:qvi_s} and \eqref{eq:penalty_s}, but also the error estimates of the penalty approximations (cf.~Propositions \ref{prop:penalty_comparison} and \ref{prop:iter_conv}).
\begin{Proposition}\l{prop:subsolution}
Suppose (H.\ref{assum:regularity}) and (H.\ref{assum:sc}) hold. Then there exists a constant $C>0$, such that for any $\eps\in (0,\kappa_0)$, the function $w\in [C^0_1(\R^d)]^M$ defined as 
\bb\l{eq:strict_sub}
w_i=-\tilde{k}_i-C, \q \tilde{k}_i=\min\left\{\min_{j\in\cI^{-i}}(k_{ji}-\eps),0\right\}, \q i\in \cI,
\ee
is a strict subsolution to \eqref{eq:qvi_s} and \eqref{eq:penalty_s} for any $\rho\ge 0$, i.e., $F_i(x,u,Du_i,D^2u_i)\le -\min(\eps,\kappa_0-\eps)$ and $F^\rho_i(x,u,Du_i,D^2u_i)\le -\min(\eps,\kappa_0-\eps)$ in the viscosity sense.

\end{Proposition}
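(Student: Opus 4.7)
The strategy is to verify separately that $w$ satisfies $(w_i - \cM_i w)(x) \le -\eta$ and $\sup_\alpha \cL^\alpha_i(x, w, Dw_i, D^2 w_i) \le -\eta$ in the viscosity sense, where $\eta := \min(\epsilon, \kappa_0 - \epsilon) > 0$. Once the obstacle bound is established, the same analysis will apply to the penalized system \eqref{eq:penalty_s}, because the penalty term vanishes on account of each $(w_i - w_j - k_{ij})$ being strictly negative.

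I would first establish the pointwise inequality
\begin{equation*}
\tilde k_i(x) - \tilde k_j(x) + k_{ij}(x) \ge \eta \qquad \forall\, x \in \R^d,\ i \neq j,
\end{equation*}
which is equivalent to $(w_i - \cM_i w)(x) = \max_{j \in \cI^{-i}}(\tilde k_j - \tilde k_i - k_{ij})(x) \le -\eta$ and simultaneously guarantees that every summand $(w_i - w_j - k_{ij})^+$ in the penalty of \eqref{eq:penalty_s} vanishes. I would proceed by case analysis on the minimizers in the definitions of $\tilde k_i$ and $\tilde k_j$. The crucial observation is that whenever $\tilde k_j(x) = k_{mj}(x) - \epsilon$ for some $m \neq j$ (the case $-\tilde k_j > 0$), the defining property of the minimum yields $k_{mj}(x) \le k_{lj}(x)$ for every $l \neq j$; in particular $k_{mj}(x) \le k_{ij}(x)$. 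Combined with the triangular inequality \eqref{eq:triangle} applied in the forms $k_{li}+k_{ij} \ge k_{lj}+\kappa_0$ (for $i \neq l,j$) and $k_{ij}+k_{ji}\ge\kappa_0$ (using $k_{ii}=0$), a routine enumeration of whether $\tilde k_i(x)$ equals $0$ or $k_{l^* i}(x) - \epsilon$ for some $l^* \neq i$ produces the bound $\eta$ in each sub-case.

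Next I would record the regularity of $w$. Since $|k_{ji}|_1 \le C$ by (H.\ref{assum:sc}), the minimum $\tilde k_i$ is Lipschitz with a constant controlled by $C$, hence $w \in [C_1^0(\R^d)]^M$. At any $x$ with $\tilde k_i(x) < 0$, the argmin index $m^*$ satisfies $k_{m^* i}(x) < \epsilon < \kappa_0$, so by (H.\ref{assum:sc}) $k_{m^* i}$ is semiconcave in a neighborhood of $x$; this makes $\tilde k_i$ locally semiconcave there (min of semiconcave is semiconcave), so $w_i$ is locally semiconvex. At points where $\tilde k_i(x) = 0$, one has $w_i(y) \ge -C = w_i(x)$ in a neighborhood, so $x$ is a local minimum of $w_i$. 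Combining these two regimes, the second-order superjet $J^{2,+} w_i(x)$ is either empty (standard fact for semiconvex functions at points of non-differentiability) or takes the form $\{(Dw_i(x), X) : X \ge X_0(x)\}$ for some $X_0(x)$ bounded below by a matrix depending only on the uniform semiconvex constant of $w$; in the local-minimum regime we in addition have $p = 0$ and $X \ge 0$. In particular, for every $(p, X) \in J^{2,+} w_i(x)$, the quantity $|p|$ is bounded by the Lipschitz constant of $w_i$ and $-\tr[a^\alpha_i X] \le C_1$, both uniformly in $\alpha, x$, and $C$.

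With this control in hand, the HJB verification reduces to an algebraic computation. Expanding
$$\cL^\alpha_i(x, w, p, X) = -\tr[a^\alpha_i X] - b^\alpha_i p - \ell^\alpha_i + c^\alpha_i w_i - \sum_{j \in \cI^{-i}} d^\alpha_{ij} w_j$$
and substituting $w_k = -\tilde k_k - C$, the component linear in $C$ is $-C\bigl(c^\alpha_i - \sum_{j \in \cI^{-i}} d^\alpha_{ij}\bigr) \le -C\lambda_0$ by (H.\ref{assum:regularity}), while every remaining term is bounded uniformly in $x$, $\alpha$, $i$, and in $C$. Choosing $C > 0$ sufficiently large then ensures $\sup_\alpha \cL^\alpha_i(x, w(x), p, X) \le -\eta$ for every $(p, X) \in J^{2,+} w_i(x)$, which together with Step 1 yields both $F_i(x, w, p, X) \le -\eta$ and $F^\rho_i(x, w, p, X) \le -\eta$ for all $\rho \ge 0$. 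The main technical difficulty lies in Step 3: handling the viscosity superjet at non-smooth points of $w_i$. The key device is the standard fact that a semiconvex function admits a nonempty second-order superjet only at its differentiability points, which makes the subsolution condition vacuous at the kinks of $w_i$ and reduces the remaining verification to a quantitative bound driven by the parameter $C$.
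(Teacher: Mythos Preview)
Your proposal is correct and follows the same overall architecture as the paper: first a case analysis on the minimizers defining $\tilde k_i$ (and/or $\tilde k_j$) to obtain the pointwise obstacle bound $w_i-w_j-k_{ij}\le -\eta$, then a regularity argument for $\tilde k_i$, and finally the HJB subsolution verification exploiting the coercive term $-C\lambda_0$.

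The genuine difference lies in how you carry out the HJB part. The paper mollifies $\tilde k_i$ by smooth functions $\tilde k_i^\delta$ whose gradients are uniformly bounded and whose Hessians are uniformly bounded below (this last point following from the semiconcavity of $\tilde k_i$), verifies the classical inequality $\sup_\alpha\mathcal L^\alpha_i(x,-\tilde k^\delta,D(-\tilde k_i^\delta),D^2(-\tilde k_i^\delta))\le C'$, and then invokes stability of viscosity subsolutions as $\delta\to 0$. You instead argue directly on the second-order superjet: in the region $\{\tilde k_i<0\}$ the local semiconvexity of $w_i$ forces any $(p,X)\in J^{2,+}w_i(x)$ to satisfy $X\ge -cI$ with a uniform $c$ (and $p=Dw_i(x)$), while on $\{\tilde k_i=0\}$ the point is a global minimum of $w_i$ so $p=0$ and $X\ge 0$. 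Both routes yield the same uniform bound on $-\mathrm{tr}[a^\alpha_i X]-b^\alpha_i p$ and hence the same conclusion once $C$ is taken large. Your approach is more self-contained (no stability lemma needed), whereas the paper's mollification argument is perhaps more robust if one later wants to combine with other viscosity manipulations.

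Two minor remarks. First, your description of $J^{2,+}w_i(x)$ as ``$\{(Dw_i(x),X):X\ge X_0(x)\}$'' is stronger than what actually holds or is needed; the correct and sufficient fact is simply $X\ge -cI$ for every $(p,X)$ in the superjet of a semiconvex function with constant $c$. Second, in your obstacle case analysis, the sub-case $\tilde k_j=k_{ij}-\eps$ and $\tilde k_i=k_{l^*i}-\eps$ with $l^*\notin\{i,j\}$ reduces to showing $k_{l^*i}\ge\eta$, which is not immediate; one should observe that the triangular inequality $k_{l^*i}+k_{ij}\ge k_{l^*j}+\kappa_0$ combined with $k_{l^*j}\ge k_{ij}$ (minimality of $m=i$) forces $k_{l^*i}\ge\kappa_0$, contradicting $k_{l^*i}<\eps$, so this sub-case is in fact vacuous. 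The paper sidesteps this by casing on the minimizer of $\tilde k_i$ rather than $\tilde k_j$, which makes the enumeration slightly cleaner.
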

\begin{proof}
For any given $\eps\in (0,\kappa_0)$, we first verify  $w_i- \cM_iw\le -\min(\eps,\kappa_0-\eps)$. Note that 
$$
w_i-w_j-k_{ij}=-\tilde{k}_i+\tilde{k}_j-k_{ij}=-\tilde{k}_i+\min\left\{\min_{l\in\cI^{-j}}(k_{lj}-\eps),0\right\}-k_{ij}, \q \fa j\in \cI^{-i}.
$$
Now if $\tilde{k}_i=0$, we can pick $l=i$ and deduce that $w_i-w_j-k_{ij}\le k_{ij}-\eps-k_{ij}=-\eps$. Otherwise, if $\tilde{k}_i=k_{mi}-\eps$ for some $m\in \cI^{-i}$, then we shall separate the discussions into two cases. If $m=j$, then \eqref{eq:triangle} and the facts that $k_{ii}=0$, $\tilde{k}_j\le 0$ imply that $w_i-w_j-k_{ij}\le -(k_{ji}-\eps)-k_{ij}\le -(\kappa_0-\eps)$. On the other hand, if $m\not =j$, by setting $l=m$, we obtain that 
$$w_i-w_j-k_{ij}\le -(k_{mi}-\eps)+(k_{mj}-\eps)-k_{ij}\le -\kappa_0,$$
which completes the proof of the desired statement by taking the maximum over all $j\in \cI^{-i}$.

Note that for any given $i\in \cI$ and $x\in\R^d$, $\tilde{k}_i(x)$ is  defined by taking the minimum over the indices $j\in \cI^{-i}$ such that $k_{ji}(x)\le \eps<\kappa_0$, hence by using (H.\ref{assum:sc}) one can show $\tilde{k}_i(x)$ is semiconcave with some constant $C\ge 0$ around $x$. Therefore, we can infer for each $i\in\cI$ that $\tilde{k}_i$ is Lipschitz continuous and semiconcave in $\R^d$. Hence there exists a sequence of smooth functions $(\tilde{k}^\eps)_{\eps>0}$ such that  $D(-\tilde{k}^\eps)$ is bounded and $D^2(-\tilde{k}^\eps)$ is   bounded below uniformly in terms of $\eps$, and $\tilde{k}^\eps$ uniformly converges to $\tilde{k}$   as $\eps\to 0$.
Then by using  the   boundedness of coefficients and the stability of subsolutions, we deduce that there exists a constant $C'$ such that for all $i\in \cI$ and $x\in \R^d$, we have
$\sup_{\alpha\in\cA_i}\mathcal{L}^{\alpha}_i(x,-\tilde{k},D(-\tilde{k}_i),D^2(-\tilde{k}_i))\le  C'$
in the viscosity sense. Hence, for any constant  $C$ such that
$C\ge (C'+\min(\eps,\kappa_0-\eps))/\lambda_0$, we can conclude that $w\in [C^0_1(\R^d)]^M$ is a strict subsolution to \eqref{eq:qvi_s} and \eqref{eq:penalty_s} for any $\rho\ge 0$.
\end{proof}

With the strict subsolution in hand, we can establish the existence of solutions to \eqref{eq:penalty_s} (cf.~Proposition \ref{prop:penalty_comparison}), the monotone convergence of \eqref{eq:penalty_s} (cf.~Theorem \ref{thm:conv_penalty}), and also the error estimate of the iterated optimal stopping approximation of \eqref{eq:qvi_s} (cf.~Proposition \ref{prop:iter_conv}). 

Moreover, we can easily see that (H.\ref{assum:M_lip}) and (H.\ref{assum:M_concave}) hold provided that the switching costs enjoy sufficient regularity. In fact, it is clear that if $u\in [C^0_1(\R^d)]^M$ and $[k_{ij}]_1\le C$ for all $j\in \cI^{-i}$, then $\cM_i u\in C^0_1(\R^d)$ satisfies $[\cM_iu]_1\le \sup_{j\in \cI^{-i}}([u_j]_1+[k_{ij}]_1)$. If $u_i$ and $k_{ij}$ are semiconcave in $\R^d$ for all $i,j\in \cI^{-i}$, then $\cM_i u$ is semiconcave in $\R^d$ with constant $[\cM_iu]_{2,+}\le \sup_{j\in \cI^{-i}}([u_j]_{2,+}+[k_{ij}]_{2,+})$. Therefore,  we can obtain as a direct consequence of Theorem \ref{thm:iter_regularity} that the iterates $(u^n)_{n\in \N}$ are Lipschitz continuous with constant $\cO(n)$ if the switching costs are Lipschitz continuous, and they are semiconcave with constant $\cO(n)$ if the switching costs are semiconcave.

Finally, by assuming the obstacles $(\Psi_i)_{i\in \cI}$ in \eqref{eq:iter_ob} are of the form $\Psi_i=\min_{j\in \cI^{-i}}\Psi_{ij}$ for all $i\in \cI$, we can generalize Proposition \ref{prop:err_ob} to study the following penalty approximation to the classical obstacle problem \eqref{eq:iter_ob}:
$$
\sup_{\alpha\in\cA_i}\mathcal{L}^{\alpha}_i(x,v^\rho(x),Dv^\rho_i(x),D^2v^\rho_i(x))+\rho\sum_{j\in \cI^{-i}}(v^\rho_i-\Psi_{ij})^+(x)=0, \q i\in \cI.
$$
and obtain exactly the same error estimates \eqref{eq:half} and \eqref{eq:1st}. 

Now we are ready to conclude the following analogue of Theorem \ref{thm:error_impulse}, which gives the convergence rate of \eqref{eq:penalty_s} to  \eqref{eq:qvi_s} with respect to the penalty parameter.

\begin{Theorem}\l{thm:error_switching}
Let $u$ and $u^\rho$ solve the QVI \eqref{eq:qvi_s} and  the penalized problem  \eqref{eq:penalty_s}, respectively. If (H.\ref{assum:regularity}), (H.\ref{assum:lambda_lip}) and (H.\ref{assum:sc}) hold, then for all  large enough penalty parameter $\rho$, we have
$$
0\le u^\rho_i(x)-u_i(x)\le C(\log \rho)^2\rho^{-1/2}, \q x\in\R^d,\,  i\in \cI.
$$
If we further assume (H.\ref{assum:regularity_concave}) holds, the constant $\lambda_0$ in (H.\ref{assum:regularity}) is sufficiently large, and $(k_{ij})_{i,j\in \cI}$ are semiconcave in $\R^d$,  then we have
$$
0\le u^\rho_i(x)-u_i(x)\le C(\log \rho)^2/\rho, \q x\in\R^d, \, i\in \cI,
$$
for some constant  $C$,  independent of the  parameter $\rho$ and the number of switching components $M$.
\end{Theorem}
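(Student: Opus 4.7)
The plan is to mirror the proof of Theorem \ref{thm:error_impulse} with every appearance of the single penalty $\rho(\cdot - \cM_i u)^+$ replaced by the sum-form penalization $\rho\sum_{j\in\cI^{-i}}(\cdot - u_j - k_{ij})^+$ from \eqref{eq:penalty_s}. First, I would define the iterated optimal stopping sequence $(u^n)_{n\ge 0}$ for \eqref{eq:qvi_s} exactly as in \eqref{eq:iter_0}--\eqref{eq:iter_n} with $\cM_i$ given by \eqref{eq:M_switching}, and auxiliary penalized iterates $(u^{\rho,n})_{n\ge 0}$ by $u^{\rho,0}=u^0$ and, for $n\ge 1$,
\[
\sup_{\alpha\in\cA_i}\mathcal{L}^{\alpha}_i(x,u^{\rho,n},Du^{\rho,n}_i,D^2u^{\rho,n}_i)+\rho\sum_{j\in\cI^{-i}}(u^{\rho,n}_i-u^{\rho,n-1}_j-k_{ij})^+(x)=0.
\]
Writing $u^{\rho,n}=\widetilde{Q}^\rho u^{\rho,n-1}$, I would use the strict subsolution constructed in Proposition \ref{prop:subsolution} (in place of the trivial constant subsolution supplied by (H.\ref{assum:impulse})) to transport Proposition \ref{prop:penalty_comparison}, Lemma \ref{lemma:Qrho_lip} and Theorem \ref{thm:conv_Q_rho} to the present setting. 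This yields well-posedness of \eqref{eq:penalty_s} and of $\widetilde{Q}^\rho$, monotone convergence $u^{\rho,n}\searrow u^\rho$ (so in particular $u^{\rho,n}\ge u^\rho$), and the decomposition
\[
0\le u^\rho_i-u_i \le |u^{\rho,n}-u^n|_0 + |u^n-u|_0,\qquad n\ge 0.
\]

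For $|u^{\rho,n}-u^n|_0$, I would first show that $\widetilde{Q}^\rho$ is a contraction in the $\sup_i|(\cdot)^+|_0$ seminorm: at a point where $(\widetilde{Q}^\rho u)_i-(\widetilde{Q}^\rho v)_i$ attains its positive supremum, each summand $(x-u_j-k_{ij})^+-(x-v_j-k_{ij})^+$ is bounded above by $|(v_j-u_j)^+|_0$ and enters with a positive coefficient, so the scalar argument in Lemma \ref{lemma:Qrho_lip} goes through unchanged. Telescoping then gives
\[
|u^{\rho,n}-u^n|_0\le |u^{\rho,n-1}-u^{n-1}|_0+|\widetilde{Q}^\rho u^{n-1}-u^n|_0,
\]
and the last term is controlled by the generalization of Proposition \ref{prop:err_ob} for the sum-penalized obstacle problem with obstacles $\Psi_{ij}=u^{n-1}_j+k_{ij}$, which the paper notes yields exactly the rates $\rho^{-1/2}$ and $\rho^{-1}$ (under additional semiconcavity of the obstacles) without any $M$-dependent factor, since each $(\cdot)^+$ is penalized independently by $\rho$. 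The term $|u^n-u|_0\le C(1-\mu)^n$ follows from the analogue of Proposition \ref{prop:iter_conv}, whose proof in \cite{reisinger2018qvi} only requires the existence of a strict subsolution, supplied here by Proposition \ref{prop:subsolution}.

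The regularity of the iterates under (H.\ref{assum:sc}) is a direct consequence of the remarks right after Proposition \ref{prop:subsolution}: Lipschitz (resp.\ semiconcave) switching costs imply (H.\ref{assum:M_lip}) (resp.\ (H.\ref{assum:M_concave})), so Theorem \ref{thm:iter_regularity} gives $\sup_i[u^n_i]_1=O(n)$ and, under the stronger hypotheses, $\sup_i[u^n_i]_{2,+}=O(n)$. Combining the three ingredients yields
\[
u^\rho_i-u_i \le C\bigl(n^2\,\rho^{-\beta}+(1-\mu)^n\bigr),\qquad \beta\in\{1/2,1\},
\]
and Lemma \ref{lemma:min} with $\alpha=\rho^{-\beta}$, $\gamma=2$ optimizes over $n\in\N$ to deliver the advertised bounds $C(\log\rho)^2\rho^{-1/2}$ and $C(\log\rho)^2/\rho$. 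The $M$-independence of the constant is preserved because all constants enter as maxima over $i,j$ rather than sums.

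The principal obstacle is the careful verification that the sum-form penalty interacts with the viscosity-solution comparison machinery exactly as the single $(\cdot)^+$ did: adapting the strict-subsolution technique behind Proposition \ref{prop:penalty_comparison} to an equation containing $|\cI|-1$ non-smooth penalty terms per component, and confirming that the mollification step in the proof of Proposition \ref{prop:err_ob} can be performed on each obstacle $\Psi_{ij}$ separately so that no $M$-dependent constants sneak into the semiconcave-obstacle rate. Once these two points are settled, the remainder of the argument is a faithful transcription of Section \ref{sec:rate}.
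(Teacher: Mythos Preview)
Your proposal is correct and follows essentially the same approach as the paper. The paper itself does not give a separate proof of Theorem \ref{thm:error_switching}; instead, in the paragraphs preceding the statement it assembles exactly the ingredients you list---the strict subsolution from Proposition \ref{prop:subsolution} in place of the constant one, the transported comparison/well-posedness/monotone convergence results, the verification that (H.\ref{assum:M_lip}) and (H.\ref{assum:M_concave}) hold under the regularity of $k_{ij}$, and the sum-form analogue of Proposition \ref{prop:err_ob} with obstacles $\Psi_i=\min_{j\in\cI^{-i}}\Psi_{ij}$---and then simply declares the theorem as the analogue of Theorem \ref{thm:error_impulse}.
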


\section{Discretization and policy iteration for penalized equations}\l{sec:discrete}
In this section, we shall discuss briefly how to construct convergent discretizations for  the penalized equations, and propose a globally convergent iterative method to solve the discretized equation based on policy iteration.

Let us start with the discretization of the penalized equation \eqref{eq:penalty}  with a fixed penalty parameter $\rho>0$. 
We shall denote by $\{x_l\}_{l}=h\Z^d$  a  uniform spatial grid  on $\R^d$ with mesh size $h$,  by $u^\rho_{i,l}$  the discrete approximation to $u^\rho_i$ at the point $x_l$, and by $Z_{i,l}$ the set of impulse controls at the point $x_l$. 

It is standard to show  that, by  using  monotone discretizations (e.g.~the semi-Lagrangian scheme in \cite{debrahant2012})  for the differential operators and  multilinear interpolations for the intervention operator (see \cite{azimazadeh2018,reisinger2018}),  one can derive  the following approximation to  \eqref{eq:penalty}:  for all $i\in \cI$,
\begin{align}\l{eq:scheme_im}
\begin{split}
\sup_{\alpha\in\cA_i} &\bigg[\sum_{m\in \Z^d}\theta^\a_{i,l,m}(u^\rho_{i,l}-u^\rho_{i,m})+c^\a_{i,l}u^\rho_{i,l}-\sum_{j\in \cI^{-i}} d^\a_{ij,l}u^\rho_{j,l}-\ell^\a_{i,l}\\
&+\rho\bigg(u^\rho_{i,l}-\inf_{z\in Z_{i,l}}\bigg[\sum_{m\in \Z^d}\gamma^z_{i,l,m}u^\rho_{i,m}+K^z_{i,l}\bigg]\bigg)^+ \bigg]=0,\q  l\in \Z^d,
\end{split}
\end{align}
with some  coefficients $\theta^\a_{i,l,m}\ge 0$, $0\le \gamma^z_{i,l,m}\le 1$ and $\sum_{m\in \Z^d}\gamma^z_{i,l,m}=1$ for all $l,m\in \Z^d$, $\a\in \cA_i$ and $z\in Z_{i,l}$. Under (H.\ref{assum:regularity}) and (H.\ref{assum:impulse}), it is  straightforward to  show that the above scheme is   monotone and consistent with the penalized equation \eqref{eq:penalty} as $h$ tends to zero,  which enables us to conclude from \cite[Proposition 3.3]{briani2012} that the  numerical solution of \eqref{eq:scheme_im} converges to the solution of  \eqref{eq:penalty} as $h\to 0$. 
Moreover, one can deduce by similar arguments as those in \cite{azimazadeh2018} that the numerical solution converges to the solution of the QVI \eqref{eq:qvi_impulse} when $1/\rho$ and $h$ tend to zero simultaneously.

Now we proceed to demonstrate the global convergence of policy iteration for solving \eqref{eq:scheme_im}. We shall first enlarge the control space and reformulate \eqref{eq:scheme_im} into an HJB equation in a countably infinite space. 
Note that by introducing the set $\cB=\{0,1\}$ and using the fact $\sum_{m\in \Z^d}\gamma^z_{i,l,m}=1$, one can  rearrange the terms of \eqref{eq:scheme_im} and obtain that: for all $(i,l)\in \cI\t \Z^d$,
\begin{align*}
\sup_{(\alpha,\b,z)\in\cA_i\t \cB\t Z_{i,l}}
& \bigg[\bigg(\sum_{m\not=l}(\theta^\a_{i,l,m}+\b\rho\gamma^z_{i,l,m})+c^\a_{i,l}\bigg)u^\rho_{i,l}&\\
&-\sum_{m\not=l}(\theta^\a_{i,l,m}+\b\rho\gamma^z_{i,l,m})u^\rho_{i,m}
-\sum_{j\in \cI^{-i}} d^\a_{ij,l}u^\rho_{j,l}-\ell^\a_{i,l}-\b\rho K^z_{i,l}\bigg]=0,
\end{align*}
which can be equivalently expressed in the following compact form: 
\bb\l{eq:hjb}
\sup_{\om\in \cA} \bigg(\tilde{A}(\om)\u^\rho-\tilde{b}(\om)\bigg)=0,
\ee
where $\u^\rho=(u^\rho_{i,l})_{(i,l)\in\cI\t \Z^d}$, $\cA=(\cA_i\t \cB\t Z_{i,l})^{\cI\t \Z^d}$, and  
for any given $\om=(\a_{i,l},\b_{i,l},z_{i,l})_{(i,l)\in\cI\t \Z^d}\in\cA$, 
$\tilde{A}(\om)=(\tilde{a}_{(i,l),(i',l')}(\om))_{(i,l),(i',l')\in\cI\t \Z^{d}}$ 
is the following ``infinite" matrix (see \cite{bokanowski2009}): 
\bb
\tilde{a}_{(i,l),(i',l')}(\om)=\begin{cases} \sum_{m\not=l}(\theta^{\a_{i,l}}_{i,l,m}+\b_{i,l}\rho\gamma^{z_{i,l}}_{i,l,m})+c^{\a_{i,l}}_{i,l}, &i'=i, l'=l,\\
-(\theta^{\a_{i,l}}_{i,l,l'}+{\b_{i,l}}\rho\gamma^{z_{i,l}}_{i,l,l'}), & i'=i, l'\not =l,\\
-d^{\a_{i,l}}_{ii',l}, & i'\not =i, l=l.
\end{cases}
\ee
 Now we can apply the classical policy iteration to solve \eqref{eq:hjb}, or equivalently \eqref{eq:scheme_im}: let $\om^{(0)}$ be a given initial control value, for all $k\ge 0$, define $(\u^{\rho,(k)}, \om^{(k+1)})$ as follows:
\bb\l{eq:pi_p}
\tilde{A}(\om^{(k)})\u^{\rho,(k)}-\tilde{b}(\om^{(k)})=0,\q\q \om^{(k+1)}\in \argmax_{\om\in \cA} (\tilde{A}(\om)\u^{\rho,(k)}-\tilde{b}(\om)),
\ee
where the maximization is performed component-wise. Such maximization operation is well-defined  under (H.\ref{assum:regularity}) and (H.\ref{assum:impulse}), due to the fact that the control set $\cA_i\t \cB\t Z_{i,l}$ is compact and the coefficients $\tilde{A}$ and $\tilde{b}$ are continuous in $\om$. 

The next theorem establishes the monotone convergence of  $(\u^{\rho,(k)})_{k\ge 0}$ for any initial guess $\om^{(0)}$, which extends the result in  \cite{azimzadeh2016weakly} to  weakly coupled systems in an infinite dimensional setting.

\begin{Theorem}\l{thm:global}
Suppose (H.\ref{assum:regularity}) and (H.\ref{assum:impulse}) hold. Then for any initial control value $\om^{(0)}$, the iterates  $(\u^{\rho,(k)})_{k\ge 0}$ are well-defined, and converge pointwise to the unique solution of \eqref{eq:hjb}, or equivalently \eqref{eq:scheme_im}, as $k\to \infty$. Moreover, we have $\u^{\rho,(k)}\ge \u^{\rho,(k+1)}$ for all $k\ge 0$.
\end{Theorem}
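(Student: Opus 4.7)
The plan is to adapt the finite-dimensional policy iteration analysis (as in \cite{azimzadeh2016weakly}) to the countably infinite system \eqref{eq:hjb} by exploiting three ingredients: (i) the $Z$-matrix/diagonal-dominance structure of $\tilde A(\om)$ which follows from (H.\ref{assum:regularity}); (ii) the compactness of the control space coming from (H.\ref{assum:regularity}) and (H.\ref{assum:impulse}); and (iii) the finite-stencil nature of the discretization, which makes each row of $\tilde A(\om)$ contain only finitely many nonzero entries, and hence makes $\u \mapsto \tilde A(\om)\u$ continuous for pointwise convergence.

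First I would establish well-definedness and uniform boundedness of the iterates. For any admissible control field $\om$, (H.\ref{assum:regularity}) shows the diagonal of $\tilde A(\om)$ is at least $\lambda_0$, the off-diagonal entries are non-positive, and every row sum equals $c^{\a_{i,l}}_{i,l} - \sum_{j\in\cI^{-i}} d^{\a_{i,l}}_{ij,l} \ge \lambda_0 > 0$. Standard diagonal-dominance estimates for such infinite $Z$-matrices (applied row-by-row in $\ell^\infty$) show that $\tilde A(\om)$ is boundedly invertible on $\ell^\infty(\cI\times\Z^d)$ with a non-negative inverse satisfying $\|\tilde A(\om)^{-1}\|_\infty \le 1/\lambda_0$. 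Hence $\u^{\rho,(k)}$ is uniquely defined and uniformly bounded by $|\tilde b(\om^{(k)})|_\infty/\lambda_0 \le C(\rho)$. The argmax step is well-defined because each single-component control set $\cA_i\times \cB\times Z_{i,l}$ is compact (by (H.\ref{assum:regularity}) and (H.\ref{assum:impulse})) and the coefficients of $\tilde A$ and $\tilde b$ depend continuously on $\om_{i,l}$. Monotonicity is then immediate: the argmax definition gives $\tilde A(\om^{(k+1)})\u^{\rho,(k)} - \tilde b(\om^{(k+1)}) \ge \tilde A(\om^{(k)})\u^{\rho,(k)} - \tilde b(\om^{(k)}) = 0$, so subtracting $\tilde A(\om^{(k+1)})\u^{\rho,(k+1)} = \tilde b(\om^{(k+1)})$ yields $\tilde A(\om^{(k+1)})(\u^{\rho,(k)} - \u^{\rho,(k+1)}) \ge 0$, and the non-negativity of $\tilde A(\om^{(k+1)})^{-1}$ gives $\u^{\rho,(k)} \ge \u^{\rho,(k+1)}$.

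The monotone bounded sequence then converges pointwise to some $\u^*$, and it remains to identify $\u^*$ as the unique solution of \eqref{eq:hjb}. For this I would invoke compactness of the full control space $\cA = \prod_{(i,l)}(\cA_i\times\cB\times Z_{i,l})$ in the product topology (Tychonoff), together with metrizability (countable product of metric spaces) to pass to a subsequence with $\om^{(k_n)} \to \om^*$ pointwise in each component. Because every row of $\tilde A$ has finite support, the map $(\om_{i,l},\u)\mapsto (\tilde A(\om)\u)_{(i,l)}$ is jointly continuous for pointwise convergence, so letting $n\to\infty$ in $\tilde A(\om^{(k_n)})\u^{\rho,(k_n)} = \tilde b(\om^{(k_n)})$ gives $\tilde A(\om^*)\u^* = \tilde b(\om^*)$, while passing to the limit in the argmax inequality $\tilde A(\om^{(k_n)})\u^{\rho,(k_n-1)} - \tilde b(\om^{(k_n)}) \ge \tilde A(\om)\u^{\rho,(k_n-1)} - \tilde b(\om)$ (valid for all $\om\in\cA$) yields $\om^*\in \argmax_\om(\tilde A(\om)\u^* - \tilde b(\om))$. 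Combining the two, $\sup_{\om\in\cA}(\tilde A(\om)\u^* - \tilde b(\om)) = 0$, i.e., $\u^*$ solves \eqref{eq:hjb}. Uniqueness of the bounded solution to \eqref{eq:hjb} (which follows from the same diagonal-dominance argument applied to the nonlinear operator) forces $\u^* = \u^\rho$ and also upgrades the subsequential convergence to convergence of the full sequence.

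The main obstacle is the infinite-dimensional aspect: one cannot invoke the finite-step termination of classical policy iteration, and one must carefully justify the limit interchange. Two delicate points deserve attention. First, the invertibility and non-negativity of $\tilde A(\om)^{-1}$ in $\ell^\infty$ needs a direct argument adapted to countably infinite systems (e.g.\ via the splitting $\tilde A(\om) = D - N$ with $D$ diagonal and $N\ge 0$ and a Neumann-series-type estimate in $\ell^\infty$), since standard finite-matrix theory does not apply verbatim. Second, the passage to the limit in the argmax inequality relies crucially on the fact that the discretization stencil (including the interpolation weights $\gamma^z_{i,l,m}$ used for $\cM_i$) has finite support for each fixed $(i,l)$, which must be checked as a property of the specific monotone scheme \eqref{eq:scheme_im}.
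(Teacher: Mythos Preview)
Your proposal is correct and follows essentially the same route as the paper: the paper's proof simply checks the strict diagonal dominance of $\tilde A(\om)$ with gap $\lambda_0$ (hence monotonicity/inverse positivity in $\ell^\infty$), uniform boundedness of the iterates, and the finite-stencil property, and then defers the remaining argument to \cite[Proposition~B.1]{bokanowski2009}, whose proof is precisely the monotone-limit-plus-compactness argument you spell out. Your use of Tychonoff on the full product $\cA=\prod_{(i,l)}(\cA_i\times\cB\times Z_{i,l})$ is a convenient packaging but not essential, since the $(i,l)$-th row of $\tilde A(\om)$ depends only on the single component $\om_{i,l}$, so the limit identification can equally be carried out componentwise with ordinary sequential compactness of each $\cA_i\times\cB\times Z_{i,l}$; either way the argument is the same in substance.
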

\begin{proof}
The statement is an analogue of Proposition B.1 in \cite{bokanowski2009}, where the monotone convergence of policy iteration has been proved for concave HJB equations. Note  \eqref{eq:coeff_mono} implies that for each $\om\in \cA$ and $(i,l)\in\cI\t \Z^d$, 
$$
\tilde{a}_{(i,l),(i,l)}(\om)\ge \sum_{(i',l')\not=(i,l))}|\tilde{a}_{(i,l),(i',l')}(\om)|+\lambda_0,
$$ 
which gives the monotonicity of $\tilde{A}$, i.e., for any given $\om\in \cA$, if $\tilde{A}(\om)\u\ge 0$ and $\u$ is bounded, then $\u\ge 0$.   Moreover, 
the boundedness of coefficients leads to the uniform boundedness of the iterates $(u^{(k)})_{k\ge 0}$ and the fact that  $\sup_{\om\in \cA}(\textrm{Card}\{(i',l')\mid \tilde{a}_{(i,l),(i',l')}(\om)\not=0\})<\infty$ for each $(i,l)\in\cI\t \Z^d$. Therefore, even though the control set in \eqref{eq:hjb} varies for each component $(i,l)$, it is straightforward to adapt the arguments for \cite[Proposition B.1]{bokanowski2009} and establish the desired convergence result.
\end{proof}
\begin{Remark}\l{rmk:direct}
Theorem \ref{thm:global} establishes one of the major advantages of penalty schemes over the  direct control scheme studied in \cite{azimzadeh2016weakly,chancelier2007}, which  applies policy iteration to solve a direct discretization of QVI \eqref{eq:qvi_impulse}. Such a scheme in general is not well-defined due to the  possible singularity of the matrix iterates caused by the non-strict monotonicity  of $u_i-\cM_i u$ in $u$. 
In fact, consider the  simple QVI $\max(u-g, u-\cM u)=0$ with  $\cM u\coloneqq u+c$ and $c>0$, 
whose solution is given by $u=g$ due to the fact that $u-\cM u=-c<0$. 
Suppose that we initialize policy iteration with the impulse control, then we need to solve $u-(u+c)=0$, which clearly admits no solution. More complicated examples can be constructed to show that the  direct control scheme  can fail at any intermediate iterate (see \cite{azimzadeh2016weakly}). 

\end{Remark}

\section{Numerical experiments}\l{sec:num}
In this section, we illustrate the theoretical findings and demonstrate the efficiency improvement of  the penalty schemes over the direct control scheme through numerical experiments. We shall present an infinite-horizon optimal switching problem and examine  the performance of penalty schemes with respect to the spatial mesh size and the penalty parameter.

 We first  introduce the following two-regime infinite-horizon optimal switching problem (see e.g.~\cite{pham2009,reisinger2018qvi}). Let $(\Om, \cF_t, \bP)$ be a filtered probability space and $\gamma=(\gamma_t)_{t\ge 0}$ be a  control process such that $\gamma_t=\sum_{k\ge 0}i_k1_{[\tau_k,\tau_{k+1})}(t)$, where  $(\tau_k)_{k\ge 0}$ is a non-decreasing sequence of stopping times representing the decision on ``when to switch", and 
for each $k\ge 0$, $i_k$ is an $\cF_{\tau_k}$-measurable random variable valued in the discrete space $\cI=\{1,2\}$, representing the   decision on ``where to switch". That is, the decision maker chooses  regime $i_k$ at the time $\tau_k$ for all $k\ge 0$. 

For any given switching control strategy $\gamma$, we consider the following controlled state  equation:
$$
dX^\gamma_t=(r+\nu(\gamma_t)(\mu-r))X^\gamma_t dt+\sigma\nu(\gamma_t) X^\gamma_t\,dW_t, \q t>0, \q X^\gamma_0=x,
$$
where $r,\mu,\sigma,x>0$ are given constants,  $(W_t)_{t>0}$ is a one-dimensional Brownian motion defined on $(\Om, \cF_t, \bP)$, and $\nu(i)=i-1$, $i\in \cI$. Then the objective function associated with the control strategy $\gamma$ is given by:
$$
J(x,\gamma)=\ex\bigg[\int_0^\infty e^{-rt}\ell(X^\gamma_t)\,dt-\sum_{k\ge 0}e^{-r\tau_{k+1}}c_{i_k,i_{k+1}}\bigg],
$$
where $\ell$ represents the running reward function and $c_{i,j}$ represents the switching cost from regime $i$ to $j$, $\fa i,j\in \cI$. For each $i\in \cI$, let $\bA^i$ be all  control strategies starting with regime $i$, i.e., $i_0=i$ and $\tau_0=0$. Then 
 the decision maker has the following value functions:
$$
u_i(x)=\sup_{\gamma\in \bA^i}J(x,\gamma), \q i\in \cI=\{1,2\}.
$$

Suppose   that the switching costs $c_{i,j}\equiv c>0$, $i\not =j$, then we can deduce from the   dynamic programming principle (see \cite{pham2009}) that the value functions $(u_1,u_2)$  satisfy the following system of quasi-variational inequalities: for all $i\in \cI$, $j\not =i$, $x\in (0,\infty)$,
\bb\l{eq:qvi}
\min\!\bigg[-\f{1}{2}\sigma^2\nu(i)^2x^2 D^2u_i(x)-(r+\nu(i)(\mu-r))xDu_i(x)+ru_i(x)-\ell(x), (u_i-u_j+c)(x)\bigg]\!\!=0.
\ee
Moreover,  even though \eqref{eq:qvi} involves a pointwise minimization instead of a pointwise maximization as  in \eqref{eq:qvi_s}, for any given penalty parameter $\rho>0$,  one can easily extend the scheme \eqref{eq:penalty_s} and derive the corresponding penalized equation for \eqref{eq:qvi}: for all $i\in \cI$, $j\not =i$, $x\in (0,\infty)$,
\bb\l{eq:qvi_p}
-\f{1}{2}\sigma^2\nu(i)^2x^2 D^2u^\rho_i(x)-(r+\nu(i)(\mu-r))xDu^\rho_i(x)+ru_i(x)-\ell(x)-\rho (u^\rho_j-c-u^\rho_i)^+(x)=0.
\ee
For our numerical experiments, we  set the parameters as  $c=1/8$, $\sigma = 0.2$, $\mu = 0.06$, $r = 0.02$ and choose a nonsmooth running reward function: $\ell(x)=0.5-|x-1|$ for $x\in [0.5,1.5]$ and $\ell(x)=0$ otherwise.

Now let $\rho>0$, $n\in \N$, and $\{x_l \} = \{lh\}_{l\in \N\cup\{0\}}$ be a uniform grid of $(0,\infty)$ with the mesh size $h=2^{-n}$. We shall derive a monotone discretization of the penalized equation \eqref{eq:qvi_p} by  employing the standard  (two-point) forward difference for the first derivates and (three-point) central difference for all second derivatives; see Section \ref{sec:discrete} and \cite{azimazadeh2018} for the convergence of the discretization as  $n,\rho\to \infty$.
We shall also localize the equation on the computational domain $(0,2)$ with homogenous Dirichlet boundary condition $u=0$ at $x = 2$, which leads to the following discrete equation for \eqref{eq:qvi_p}:
find $\u^\rho_N=(\u^\rho_{1,N},\u^\rho_{2,N})\in \R^{N}$ satisfying
 \begin{align}\l{eq:qvi_pd}
 \begin{split}
 & A\u^\rho_N-\vec{\ell}-\rho(b-M\u^\rho_N)^+\\
& \coloneqq \begin{pmatrix}B_1+rI_{N/2} & 0\\ 0 & B_2+rI_{N/2} \end{pmatrix}\u^\rho_N-\begin{pmatrix}\ell \\ \ell \end{pmatrix}-\rho\max(b-M\u^\rho_N,0)=0,
\end{split}
\end{align}
where $N=4/h=2^{n+2}$ is the total number of unknowns, $B_1,B_2\in \R^{N/2\t N/2}$ are  matrices resulting from  discretization of the differential operators, $\ell\in \R^{N/2}$ is a vector such that $\ell_k=\ell(x_{k-1})$ for all $k=1,\ldots, N/2$,
\[M=\begin{pmatrix} I_{N/2} & -I_{N/2} \\ -I_{N/2} & I_{N/2}\end{pmatrix}\]
is a matrix representation of the switching operator, and $b\in\R^N$ is a constant vector with value $-c$. Similarly, we can discretize \eqref{eq:qvi} for the direct control scheme: 
find $\u_N=(\u_{1,N},\u_{2,N})\in \R^{N}$ satisfying
\bb\l{eq:qvi_d}
\min(A\u_N-\vec{\ell}, M\u_N-b)=0. 
\ee

In the following, we shall discuss the implementation details for solving  \eqref{eq:qvi_pd} and  \eqref{eq:qvi_d} with policy iteration. 
The direct control scheme, which will serve as a benchmark for our penalized schemes, applies policy iteration to the discrete equation \eqref{eq:qvi_d} directly (see \cite{chancelier2007,azimzadeh2016weakly}). More precisely, 
 let $\om^{(0)}\in \{0,1\}^N$ be a given initial control value. Then, for all $k\ge 0$, we find  $(\u^{(k)}, \om^{(k+1)})\in \R^N\t  \{0,1\}^N$ such that 
\bb\l{eq:pi_d}
A^{(k)}\u^{(k)}-b^{(k)}=0,\q\q \om^{(k+1)}\in \argmin_{\om\in \{0,1\}}\bigg[ (1-\om)(A\u^{(k)}_N-\vec{\ell})+ \om (M\u^{(k)}_N-b)\bigg],
\ee
where the $i$th row of the matrix $A^{(k)}$ and the $i$-th component of the vector $b^{(k)}$ are determined by:
$$
A^{(k)}_i= (1-\om^{(k)}_i)A_i+\om^{(k)}_iM_i, \q b^{(k)}_i=(1-\om^{(k)}_i)\vec{\ell}_i+\om^{(k)}_i b, \q i=1,\ldots, N.
$$
The  iteration will be terminated once a desired tolerance is achieved, i.e., 
\bb\l{eq:tol}
\f{\|\u^{(k)}_N-\u^{(k-1)}_N\|}{\max(\|\u^{(k)}_N\|,\textrm{scale})} < \textrm{tol},
\ee
 where $\|\cdot\|$ denotes the sup-norm, and  the scale parameter is  chosen to guarantee that no unrealistic level of accuracy will be imposed if the solution is close to zero. On the other hand, the penalized scheme views  \eqref{eq:qvi_pd} with a given penalty parameter $\rho$ as a discrete HJB equation, and applies policy iteration \eqref{eq:pi_p} to solve it, which will be terminated by the same criterion \eqref{eq:tol}, with $(\u^{(k)}_N)_{k\ge 0}$  replaced by $(\u^{\rho,(k)}_N)_{k\ge 0}$.
We take $\textrm{tol}=10^{-9}$ and $\textrm{scale} = 1$ for all the experiments, and perform computations using \textsc{Matlab} R2018a on a 2.70GHz Intel Xeon E5-2680  processor. 

We reiterate that, compared with the global convergence of policy iteration \eqref{eq:pi_p} applied to the penalized equation \eqref{eq:qvi_pd}, policy iteration \eqref{eq:pi_d} applied to  \eqref{eq:qvi_d} in general is not well-defined for an arbitrary initial guess $\om^{(0)}$, as already observed in Remark \ref{rmk:direct} and \cite{azimzadeh2016weakly}. In fact, if we initialize \eqref{eq:pi_d} with $\om^{(0)}= \{1\}^N$, then we need to solve $M\u^{(0)}_N-b=0$, which has no solution due to the structure of the matrix $M$ and the fact $b=-c<0$. Therefore, we shall initialize policy iteration for 
\eqref{eq:qvi_pd} and \eqref{eq:qvi_d} with the continuation value, i.e., $\u^{(0)}_N=\u^{\rho,(0)}_N$ satisfying $A\u^{(0)}_N=\vec{\ell}$, which admits a solution since $A$ is a monotone matrix.

\begin{figure}[!ht]
    \centering
    \includegraphics[keepaspectratio=true, width=0.6\columnwidth ]{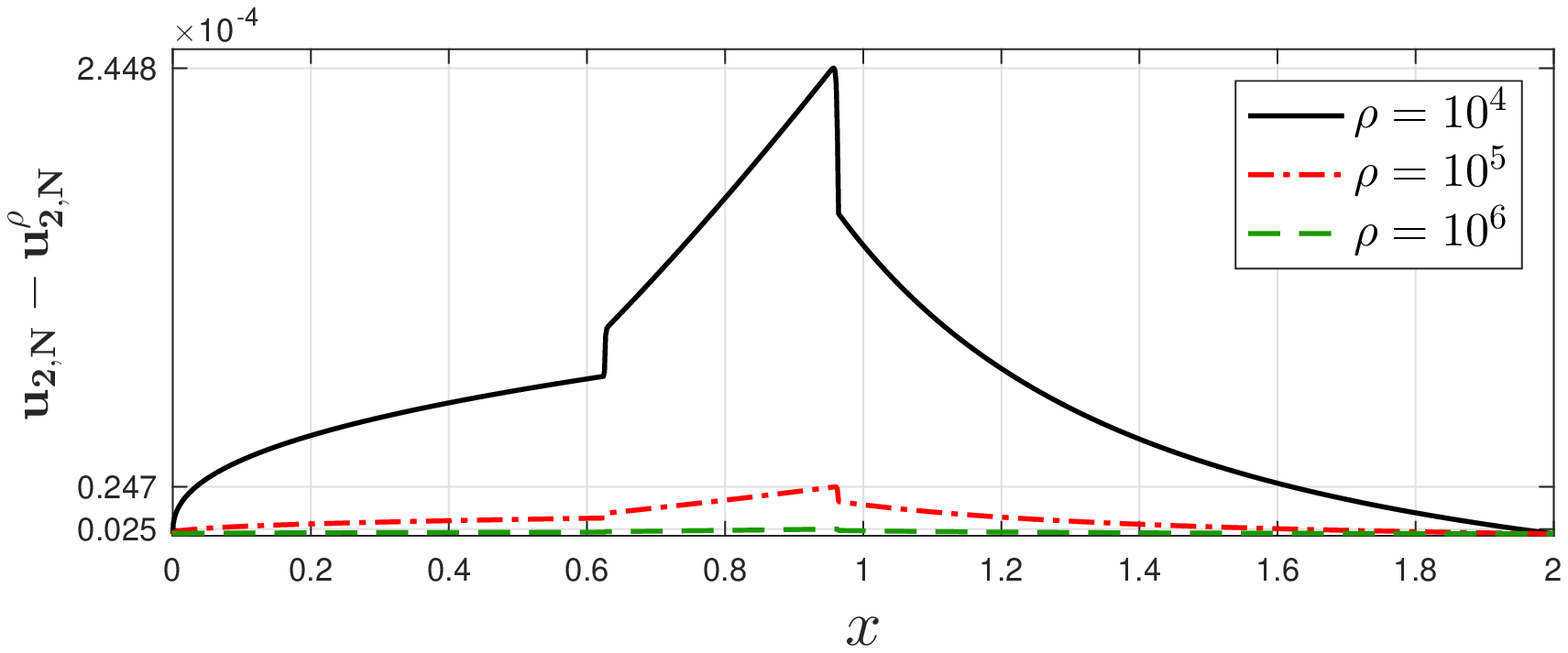}
        \includegraphics[keepaspectratio=true, width=0.6\columnwidth]{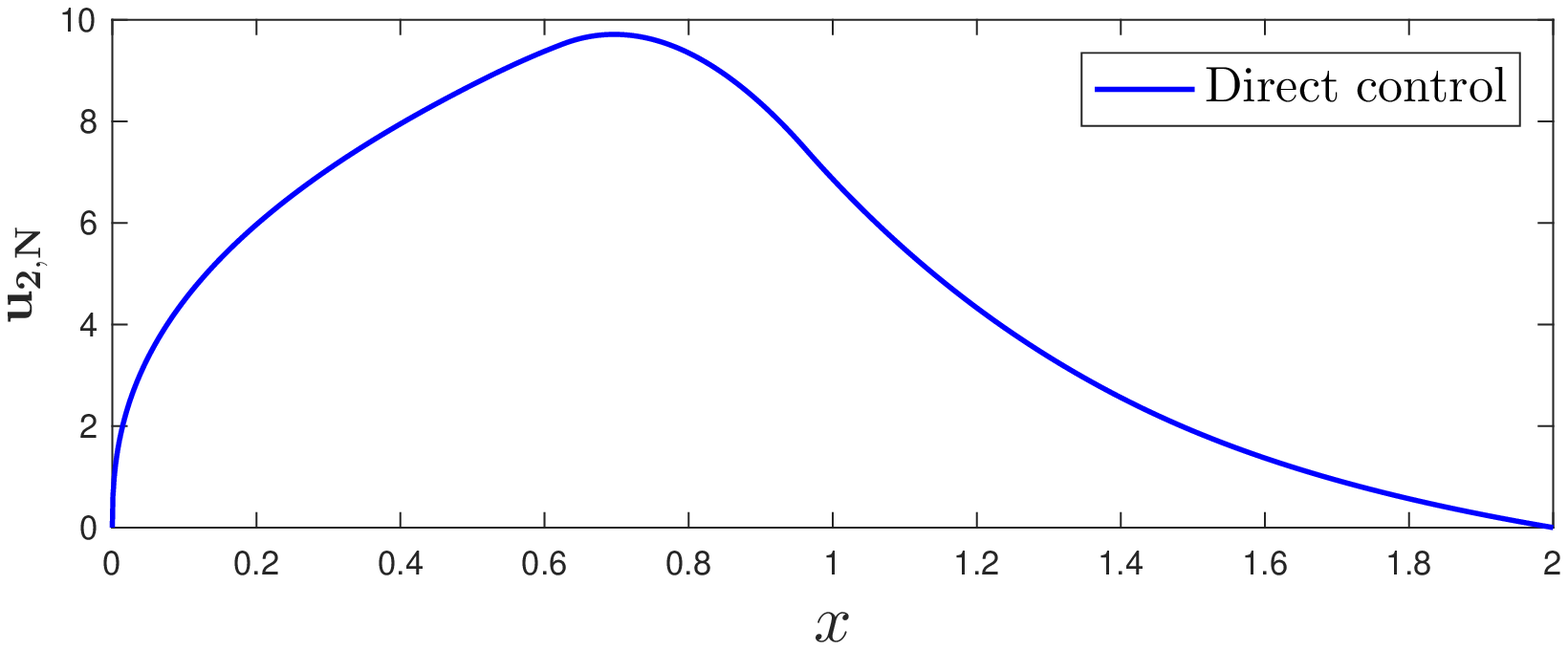}
    \caption{Numerical solutions of the value function $u_2$ obtained by the direct control scheme and the penalty schemes with different  penalty parameters ($N=65536$). Shown are: the difference $\u_{2,N}-\u^\rho_{2,N}$ of numerical solutions (top), and the  numerical solution $\u_{2,N}$ of the direct control scheme (bottom).}
    \label{fig:penalty_err}
\end{figure}

We start by  examining the convergence of the penalized schemes with respect to the penalty parameter and the mesh size. 
Figure \ref{fig:penalty_err} presents, for a fixed mesh size $h = 2^{-14}$ (the total number of unknowns is $N=65536$), the difference between the numerical solutions obtained by the direct control scheme and the penalty scheme with different  penalty parameters. It clearly indicates that, as the penalty parameter $\rho\to \infty$,  
the penalized solutions converge monotonically from below to the solution of the direct control scheme. 
Since the value function is sufficiently smooth (Figure \ref{fig:penalty_err}, bottom), 
we can also observe first order convergence of the penalization error (in the sup-norm) with respect to the penalty parameter $\rho$.

Table \ref{table:space_err} summarizes, for different mesh sizes, the numerical solutions of the direct control scheme and the penalty scheme with a fixed  parameter $\rho=10^5$.
It is interesting to observe that, for a fixed mesh size,  the spatial discretization errors of both the direct control scheme and the penalty scheme are of the same magnitude and converge to zero with first order as the mesh size tends to $0$. 
Moreover, the penalty parameter $\rho=10^5$ already leads to a negligible penalization error (compared to the  discretization error), which seems  to be stable with respect to different mesh sizes.


\begin{table}[!h]
 \renewcommand{\arraystretch}{1.05} 
\centering
\caption{
Results for the direct control scheme and the penalty scheme ($\rho=10^5$) with different mesh sizes.}
\label{table:space_err}
\begin{tabular}[t]{@{}rccc@{}}
\toprule
N & $16384$ & $32768$ & $65536$\\
\midrule
Direct control scheme \\
$\u_{1,N}(x=1)$ & 6.9339733 &  6.9330192 & 6.9325423\\
$|\u_{1,N}-\u_{1,N/2}|(x=1)$ &  &  $9.54\t 10^{-4}$ & $4.77\t 10^{-4} $ \\
Penalty scheme $(\rho=10^5)$ \\
$\u^{\rho}_{1,N}(x=1)$ & 6.9339645 &  6.9330100 & 6.9325330\\
$|\u^{\rho}_{1,N}-\u^{\rho}_{1,N/2}|(x=1)$ &  &  $9.54\t 10^{-4}$ & $4.77\t 10^{-4} $\\
\midrule
$\|\u_N-\u^{\rho}_N\|$ & $2.42\t 10^{-5}$  & $2.47\t 10^{-5}$  & $2.47\t 10^{-5}$ 
\\
\bottomrule
\end{tabular}
\end{table}%

We proceed to analyze the computational efficiency of the direct control scheme and the penalty scheme. Figure \ref{fig:efficiency} compares, for different mesh sizes and penalty parameters, the number of required policy iterations and the computational time of both schemes. 
One can observe clearly from Figure \ref{fig:efficiency}, left, that the number of required iterations  for the direct control scheme (the blue line) exhibits a linear growth  in  the size of the discrete system.   Moreover, 
 our experiments show that policy iteration applied to \eqref{eq:qvi_d} with fine meshes, i.e., $N\in \{131072,262144\}$, is not able to meet the desired accuracy within $10^5$  iterations, which suggests that  the direct control scheme may diverge for sufficiently fine meshes. On the other hand,  for penalty schemes with fixed penalty parameters 
(the green and black lines in Figure \ref{fig:efficiency}, left), the number of required iterations 
eventually stabilizes to a finite value for all  fine meshes, which is significantly less than the number of iterations for the direct control scheme.

One can further compare the overall runtime of the direct control scheme and the penalty scheme for solving discrete systems with different sizes $N$ (Figure \ref{fig:efficiency}, right). Note that for both methods, the computational time per iteration grows at a rate $O(N)$ due to the linear system solver. 
Hence, the total runtime of the direct control scheme increases at a rate $O(N^2)$ due to the  linear growth of the required iterations (the blue line), while the penalized scheme (with a fixed penalty parameter) achieves  a linear complexity in the computational time (the green and black lines), benefiting from  a mesh-independence property of policy iteration for penalized equations.  This suggests that the penalty schemes are significantly more efficient than the direct control scheme for solving large-scale discrete QVIs, as pointed out in \cite{azimzadeh2016weakly}.

\begin{figure}[!htb]
    \centering

    \includegraphics[width=0.496\textwidth,height=5.6cm]{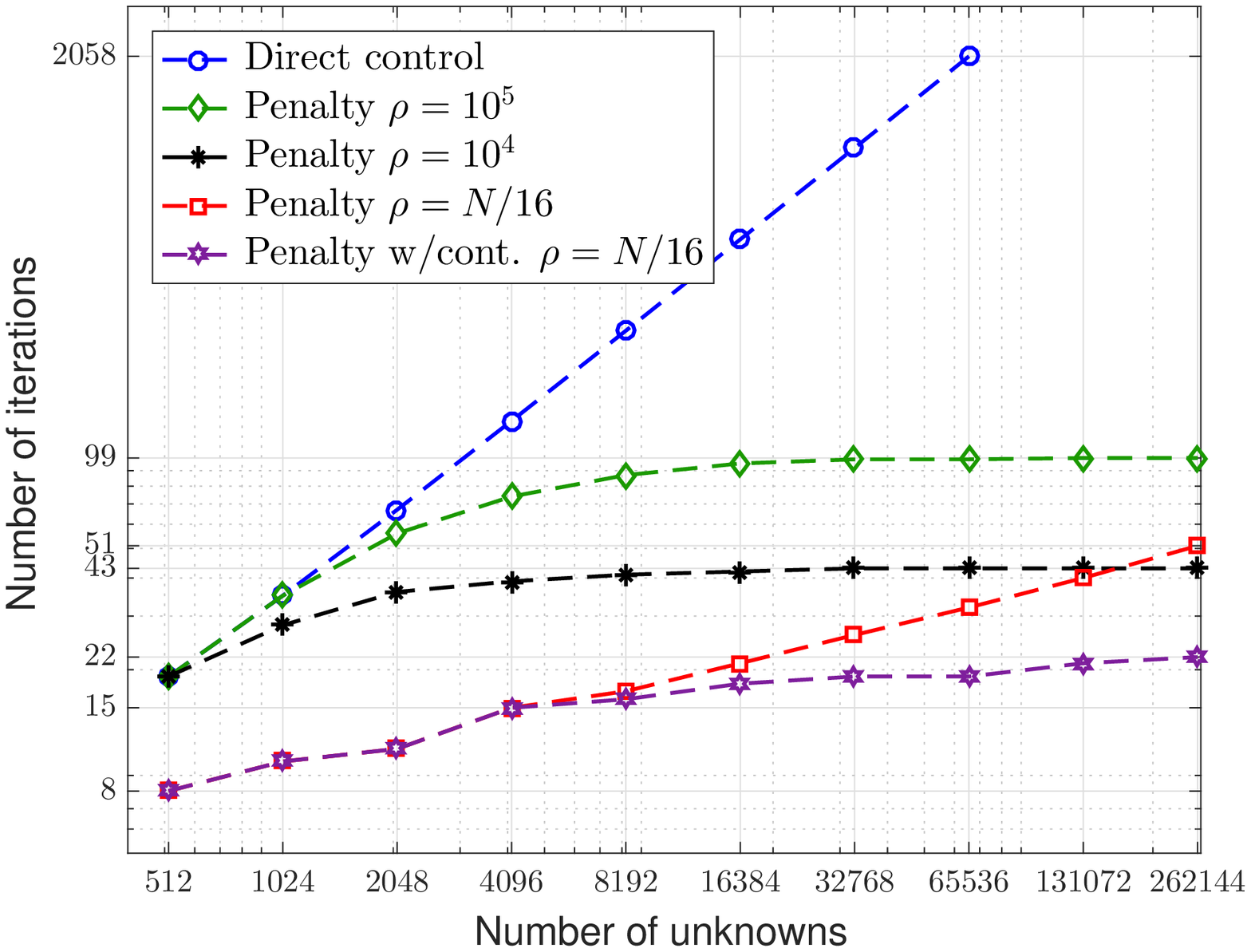} 
    \includegraphics[width=0.496\textwidth,height=5.6cm]{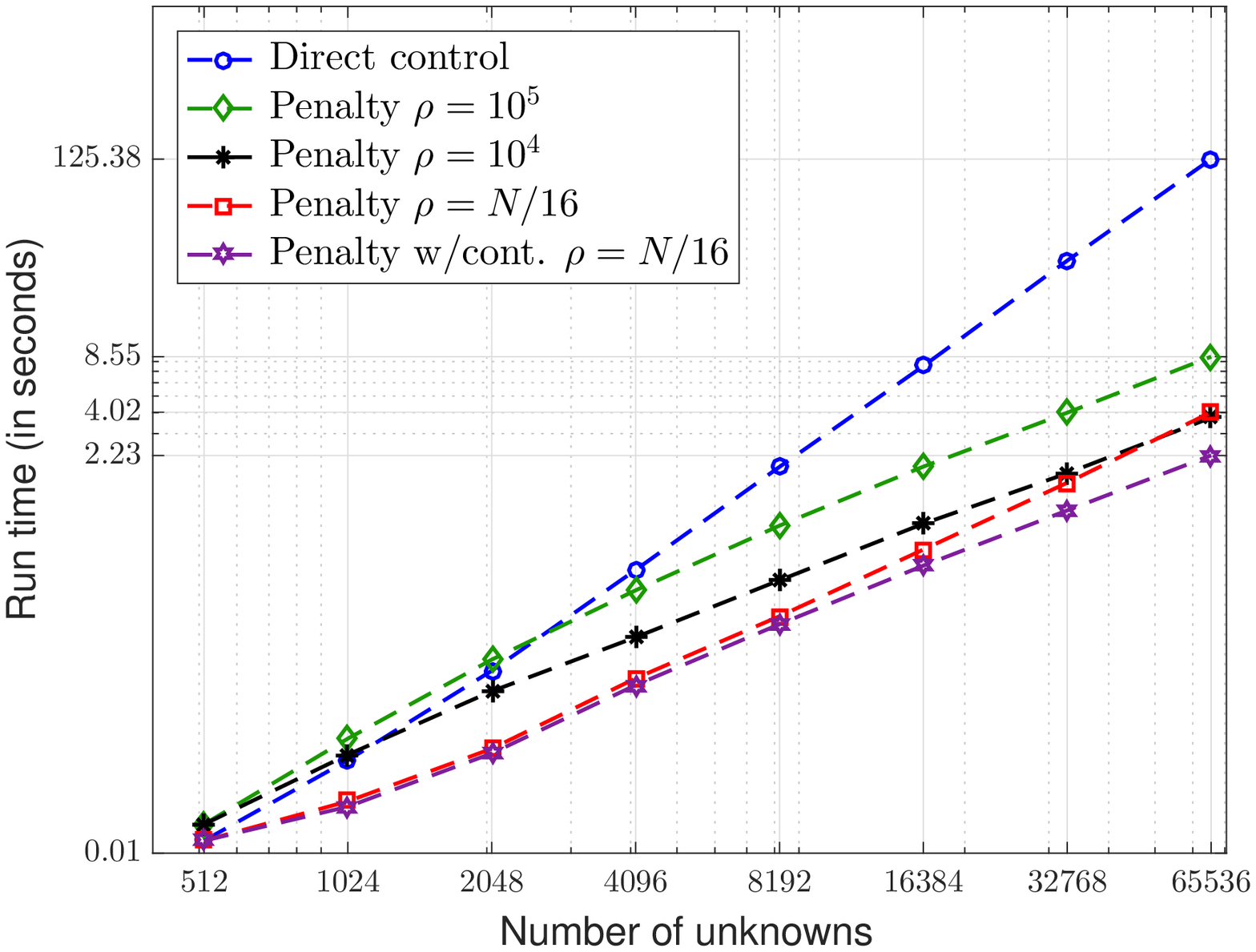} 
    \caption{Comparison of  the number of  iterations and the runtime for the direct control scheme and the penalty method with different  mesh sizes and penalty parameters (plotted in a log-log scale).}
    \label{fig:efficiency}
\end{figure}

In practice, instead of solving the penalized equation \eqref{eq:qvi_p} with a fixed penalty parameter $\rho$, we shall  construct a convergent approximation to  the solution of the QVI \eqref{eq:qvi} based on the penalized solutions, by 
letting $1/\rho$ and $h$ tend to zero simultaneously (see also \cite{azimzadeh2016weakly,azimazadeh2018}). The first order convergence of both the penalization error and the discretization error (see Figure \ref{fig:penalty_err} and Table \ref{table:space_err}) suggests us to take $\rho=CN$, where the constant $C=1/16$ was found to achieve the optimal balance between the penalization error and the discretization error. 
Moreover, as suggested in \cite{ito2003}, we can  combine the penalty method with a  continuation procedure in $\rho$ to further improve the algorithm's  efficiency.  In particular, given a discrete penalized equation \eqref{eq:qvi_pd} of size $N$, if the corresponding penalty parameter $\rho=N/16>200$,  we shall first solve a penalized equation \eqref{eq:qvi_pd} with the parameter $\rho=100$ by using the initialization $\u^{(0)}=A^{-1}\ell$, and then use the solution as the initialization for the algorithm with the desired parameter $\rho$.

Figure \ref{fig:efficiency} depicts the performance of the penalty scheme with the parameter $\rho=N/16$ (the red line) and  the penalty scheme with the  parameter $\rho=N/16$  and a continuation procedure (the purple line).
The increasing penalty parameter results in  an increasing number of iterations, but the growth rate is  much lower  than that of the discrete control scheme. A linear regression of the data without continuation procedure shows that the number of iterations  is of the magnitude $O(N^{0.3})$. 
Moreover, the continuation strategy effectively enhances the efficiency of the algorithm, and the number of iterations has only a mild dependence on the size of the system.

We finally remark that one can choose $\Delta t=O(h)$ and $1/\rho=O(h)$ to construct a convergent penalty approximation to solutions of parabolic HJBQVIs. It has been observed in practice (see Table 6.6 in \cite{azimzadeh2016weakly}) that the number of iterations for the penalty scheme remains stable with respect to  the mesh refinement, due to the fact that refining the mesh size in general produces a more accurate initial guess for policy iteration,
while  the direct control scheme  requires an increasing number of policy iterations per timestep as the mesh size tends to zero,  which 
leads to significantly more policy iterations  for high levels of refinement.

\section{Conclusions}
This paper develops a penalty approximation to systems of HJB quasi-variational inequalities (HJBQVIs) stemming from hybrid control problems involving impulse controls. We  established the monotone convergence of the penalty schemes and estimated the convergence orders, which subsequently led to  convergent approximations of action regions and optimal impulse controls. We further proved the monotone convergence of policy iteration for the penalized equations in an infinite dimensional setting. 
Numerical examples for infinite-horizon optimal switching problems are presented to illustrate the theoretical findings and to demonstrate the efficiency improvement of the penalty schemes over the classical direct control scheme.
\color{black}

To the best of our knowledge, this is the first paper which derives rigorous error estimates for  penalty approximations of  HJBQVIs, and proposes convergent approximations to action regions and optimal impulse controls. The penalty schemes and convergence results can be easily extended to nonlocal elliptic HJBQVIs arising from impulse control problems of jump-diffusion processes with regime switching. Natural next steps would be to extend the penalty approach to  parabolic HJBQVIs  as in \cite{seydel2009},  and to monotone systems with  bilateral obstacles arising from switching games \cite{ishii1990}.

\appendix

\section {Proofs of Lemma \ref{Lemma:cM} (3), Propositions \ref{prop:penalty_comparison} and \ref{prop:iter_conv}, and Lemmas \ref{lemma:Qrho_lip} and \ref{lemma:min}}\l{appendix}

\begin{proof}[Proof of Lemma \ref{Lemma:cM} (3)]
Let $x^\rho, x\in \R^d$ for all $\rho\in \N$ and $\lim_{\rho\to \infty}x^\rho= x$, we first establish that $\limsup_{\rho \to \infty} (\cM_i u^\rho)(x^\rho)\le (\cM_i u^*) u(x)$. For any $\eps>0$, there exists $z^\eps\in Z(x)$, such that $u^*_i(\Gamma_i(x,z^\eps)) + K_i(x,z^\eps)-\eps\le (\cM_i u^*)(x)$. Since $Z(x^{\rho})$ converges to $Z(x)$ in the Hausdorff metric, we can find $z^{\rho,\eps}\in Z(x^{\rho})$, such that $\lim_{\rho\to \infty}z^{\rho,\eps}=z^\eps$. Then  we conclude the desired result from the continuity of $\Gamma_i, K_i$ and the following inequality: for all $\eps>0$,
\begin{align*}
\limsup_{\rho \to \infty} (\cM_i u^\rho)(x^\rho)&\le \limsup_{\rho \to \infty}[u^\rho_i(\Gamma_i(x^\rho,z^{\rho,\eps})) + K_i(x^\rho,z^{\rho,\eps})]
\!\le u^*_i(\Gamma_i(x,z^{\eps})) + K_i(x,z^{\eps})\\
&\le (\cM_i u^*)(x)+\eps.
\end{align*}

We then show $(\cM_i u_* )(x)\le \liminf_{\rho \to \infty} (\cM_i u^\rho)(x^\rho)$. For any $\eps>0$ and $\rho\in \N$, there exists $z^{\rho,\eps}\in Z(x^{\rho})$ such that $u^\rho_i(\Gamma_i(x^\rho,z^{\rho,\eps})) + K_i(x^\rho,z^{\rho,\eps})-\eps\le (\cM_i u^\rho)(x^\rho)$. The fact that $Z(x^{\rho})$ is  convergent to the compact set $Z(x)$ implies that by passing to a subsequence, one can assume $(z^{\rho,\eps})_{\rho\in \N}$ is convergent to some $z^\eps\in Z(x)$. Then we have
\begin{align*}
\liminf_{\rho \to \infty} (\cM_i u^\rho)(x^\rho)&\ge\liminf_{\rho \to \infty}[ u^\rho_i(\Gamma_i(x^\rho,z^{\rho,\eps})) + K_i(x^\rho,z^{\rho,\eps})-\eps]\\
&\ge (u_*)_i(\Gamma_i(x,z^{\eps})) + K_i(x,z^{\eps})-\eps
\ge (\cM_i u_* )(x)-\eps,
\end{align*}
which completes the proof by letting $\eps\to 0$.
\end{proof}

\begin{proof}[Proof of Proposition \ref{prop:penalty_comparison}]
Let $u$ and $v$ be a bounded subsolution  and supersolution of \eqref{eq:penalty} with a fixed penalty parameter $\rho\ge 0$, respectively. We observe that for sufficiently large constant $C>0$, $w=-C$ is a subsolution to
$F^\rho_i(x,w,Dw_i,D^2w_i)\le -\kappa_0<0$, from which by using  the fact that  $F^\rho$ is convex in $u$, $Du$ and $D^2u$, we deduce that  $u_m\coloneqq (1-\f{1}{m})u+\f{1}{m}w$ is a  subsolution to $F^\rho_i(x,u_m,D(u_m)_i,D^2(u_m)_i)\le -\kappa_0/m$ for all $m\in \N$. Note that it suffices to show $u_m-v\le 0$ for all $m\in \N$, since one can deduce the desired comparison principle $u-v\le 0$ by letting $m\to \infty$.

Now suppose that there exists $m_0\in \N$ such that $M=\sup_{x\in\R^d,i\in\cI} ((u_{m_0})_i-v_i)(x)>0$, and consider for each $\eps>0$ the following quantity 
\bb
M_\eps=\sup_{x,y\in \R^d,i\in \cI} ((u_{m_0})_i(x)-v_i(y)-\f{1}{2\eps}|x-y|^2).
\ee
Then, by assuming without loss of generality that there exists an $i\in \cI$, independent of $\eps$, such that  the maximum is obtained at the index $i$ and the point $(x^\eps, y^\eps)$ (otherwise one can modify the test function with an additional penalty term), one can deduce from the standard arguments (see \cite{crandall1992}) that $\lim_{\eps\to 0}M_\eps=M$ and $\lim_{\eps\to 0}x^\eps=\lim_{\eps\to 0}y^\eps=x_0$ for some $x_0$. Thus by applying the maximum principle (\cite[Theorem 3.2]{crandall1992}),  we have for any given $\theta>1$  the matrices $X, Y\in \bS^d$ such that $(p_x, X)\in \bar{J}^{2,+}u_m(x^\eps)$ and $(-p_y,-Y)\in \bar{J}^{2,-}v(y^\eps)$, where 
$$
(p_x,p_y)=\f{1}{\eps}(x^\eps-y^\eps, y^\eps-x^\eps), \q\textnormal{and},\q\begin{pmatrix} X &0 \\ 0& Y  \end{pmatrix}\le \theta \f{1}{\eps}\begin{pmatrix} I &-I \\ -I& I  \end{pmatrix},
$$
from which, by using the   sub- and supersolution properties, we have
\begin{align}
\begin{split}
\sup_{\alpha\in\cA_i}&\mathcal{L}^{\alpha}_i(x^\eps,u_{m_0}(x^\eps),p_x,X)-\sup_{\alpha\in\cA_i}\mathcal{L}^{\alpha}_i(y^\eps,v(y^\eps),-p_y,-Y)\\
&+\rho((u_{m_0})_i-\cM_i u_{m_0})^+(x^\eps)-\rho(v_i-\cM_i v)^+(y^\eps) +\kappa_0/m_0\le 0.
\end{split}
\end{align}

Now we separate our discussions into two cases. Suppose  for all small enough $\eps$, we have
$$
\rho((u_{m_0})_i-\cM_i u_{m_0})^+(x^\eps)-\rho(v_i-\cM_i v)^+(y^\eps) \le -\kappa_0/m_0,
$$
which implies $(v_i-\cM_i v)(y^\eps)\ge 0$ and 
$$
((u_{m_0})_i-\cM_i u_{m_0})(x^\eps)-(v_i-\cM_i v)(y^\eps) \le -\kappa_0/(\rho m_0).
$$
Then by rearranging the terms in the above inequality and using the definition of $M_\eps$, we have
\begin{align*}
M&=\lim_{\eps\to 0}M_\eps=\lim_{\eps\to 0}\big[(u_{m_0})_i(x^\eps)-v_i(y^\eps)-|x^\eps-y^\eps|^2/(2\eps)\big]\\
&\le \limsup_{\eps\to 0}(\cM_i u_{m_0})(x^\eps)-\liminf_{\eps\to 0}(\cM_i v)(y^\eps)-\liminf_{\eps\to 0}|x^\eps-y^\eps|^2/(2\eps)-\kappa_0/(\rho m_0)\\
&\le (\cM_i u_{m_0})(x_0)-(\cM_i v)(x_0)-\kappa_0/(\rho m_0)\le M-\kappa_0/(\rho m_0),
\end{align*}
where we have used Lemma \ref{Lemma:cM} (3) and the fact that $u_{m_0}$ and $v$ are upper- and lower-semicontinuous, respectively. This clearly contradicts to the fact that $\kappa_0/(\rho m_0)>0$.

On the other hand, suppose for all small enough $\eps$, we have
$$\sup_{\alpha\in\cA_i}\mathcal{L}^{\alpha}_i(y^\eps,v(y^\eps),-p_y,-Y)-\sup_{\alpha\in\cA_i}\mathcal{L}^{\alpha}_i(x^\eps,u_{m_0}(x^\eps),p_x,X)\ge 0.$$
This is the classical case (see \cite{ishii1991monotone}). In particular, by using the estimate
\begin{align*}
&\sup_{\alpha\in\cA_i}\mathcal{L}^{\alpha}_i(x^\eps,u_{m_0}(x^\eps),p_x,X)-\sup_{\alpha\in\cA_i}\mathcal{L}^{\alpha}_i(x^\eps,v(y^\eps),p_x,X)\\
&\ge \lambda_0((u_{m_0})_i(x^\eps)-v_i(y^\eps))=\lambda_0\bigg(M_\eps+\f{|x^\eps-y^\eps|^2}{2\eps}\bigg)
\end{align*}
and letting $\eps\to 0$, we can deduce that $M\le 0$, which is a contradiction. 
\end{proof}

\begin{proof}[Proof of Proposition \ref{prop:iter_conv}]
We start with several important properties of  the solution operator $Q: [C^0_1(\R^d)]^M\to [C^0_1(\R^d)]^M$  to \eqref{eq:iter_n}. That is, for any given $u$, $Qu$ solves the system of variational inequalities of the form \eqref{eq:iter_n}, where the obstacle $\cM_i u^{n-1}$ is replaced by $\cM_i u$. Then the comparison principle of \eqref{eq:iter_n}  and Lemma \ref{Lemma:cM} (2) imply that $Q$ is monotone: $Qu\ge Qv$ if $u\ge v$. Moreover, one can show $Q$ is concave. In fact, for any given $u,v \in  [C^0_1(\R^d)]^M$ and $\lambda\in [0,1]$, we can deduce from Lemma \ref{Lemma:cM} (1)  that for all $i\in \cI$, 
\bb\l{eq:M_convex}
(1-\lambda)(Qu)_i+\lambda (Qv)_i-\cM_i[(1-\lambda)u+\lambda v]\le (1-\lambda)((Qu)_i-\cM_iu)+\lambda((Qv)_i-\cM_iv).
\ee
Moreover,  since  the HJB equation \eqref{eq:iter_0} is convex in $u$, $Du$ and $D^2u$, by applying 
\cite[Lemma A.3]{barles2002} (note the weakly coupled term $\sum_{j\in \cI^{-i}}d^\a_{ij}u_j$ is linear in $u_j$, $j\in \cI$), we see $(1-\lambda)Qu+\lambda Qv$ is a subsolution to \eqref{eq:iter_n} with an obstacle $\cM_i[(1-\lambda)u+\lambda v]$, and  consequently conclude  the concavity of the operator $Q$ from  the comparison principle of \eqref{eq:iter_n}.

Now let $C$ be a sufficiently large constant such that  $w=(w_i)_{i\in\cI}$ with $w_i=-C$ for all $i\in\cI$ is a strict subsolution to \eqref{eq:qvi_impulse}, that is, $F_i(x,w,Dw_i,D^2w_i)\le -\kappa_0$ for all $i\in \cI$.
We proceed to establish a  contractive property of the iterates $(u^n)_{n\in\N}$, where
$u^n$ is a viscosity solution to \eqref{eq:iter_n} for each $n$.
By using the monotonicity and concavity of the operator $Q$, we can show that
if $u^{n-1}-  u^n \le \lambda (u^{n-1}-w)$ for some  $\lambda\in  [0,1]$ and $n\in \N$, then it holds for any constants $C\ge |(u^n)^+|_0+|w|_0$ and $0< \mu \le \min(1,\kappa_0/C)$ that $u^{n}-  u^{n+1} \le \lambda (1-\mu)(u^{n}-w)$  (cf.~\cite[Lemma~3.3]{reisinger2018qvi}).
Since $w\le u^n\le u^0$ for all $n$ and $w$ is  bounded, 
there exists a constant $\mu\in (0,1]$ such that $0\le u^{n-1}-u^n\le (1-\mu)^{n-1}(u^0-w)$ for all $n\ge 0$.
Consequently we can show $(u^{n})_{n\ge 0}$ converges uniformly to some continuous function $u$, which is the unique viscosity solution to \eqref{eq:qvi_impulse}. Then the contractive property enables us to conclude the desired error estimate.
\end{proof}

\begin{proof}[Proof of Lemma \ref{lemma:Qrho_lip}]
 For $\delta,\gamma>0$, we define for all $x,y\in \R^d$ that
\begin{align*}
\Phi_i(x,y)=(Q^\rho u)_i(x)-(Q^\rho v)_i(y)-\phi(x,y), \q \phi(x,y)=\delta|x-y|^2+\gamma |x|^2,
\end{align*}
and let  $\Phi_i(\bar{x},{y})=m_{\delta,\gamma}\coloneqq \sup_{i,x,y}\Phi_i(x,y)$ for some $(\bar{x},\bar{y})\in \R^{2d}$ and $i\in \cI$, where we omit the dependence on $\delta,\gamma$ for notational simplicity. Since $\cI$ is a finite set, we shall assume without loss of generality that the index $i$ is independent of $ \delta,\gamma$.
Then for any $\theta>1$,  we deduce from the maximum principle  \cite[Theorem 3.2]{crandall1992} that for any $\theta>1$, we have
\begin{align*}
\sup_{\alpha\in\cA_i}&\mathcal{L}^{\alpha}_i(\bar{x},(Q^\rho u)(\bar{x}),p_x,X)-\sup_{\alpha\in\cA_i}\mathcal{L}^{\alpha}_i(\bar{y},(Q^\rho v)(\bar{y}),-p_y,-Y)\\
&+\rho((Q^\rho u)_i-\cM_i u)^+(\bar{x})-\rho((Q^\rho v)_i-\cM_i v)^+(\bar{y})\le 0,
\end{align*}
where $(p_x,p_y)=(D_x\phi(\bar{x},\bar{y}), D_y\phi(\bar{x},\bar{y}))$, and 
$\begin{pmatrix} X &0 \\ 0& Y  \end{pmatrix}\le \theta D^2\phi (\bar{x},\bar{y})$.

We now discuss two cases. Suppose $((Q^\rho u)_i-\cM_iu)^+(\bar{x})-((Q^\rho v)_i-\cM_iv)^+(\bar{y})< 0$, then 
we have $((Q^\rho u)_i-\cM_iu)(\bar{x})\le ((Q^\rho v)_i-\cM_iv)(\bar{y})$, and consequently
\begin{align*}
(Q^\rho u)_i(\bar{x})-(Q^\rho v)_i(\bar{y})&\le (\cM_i u)(\bar{x})-(\cM_i v)(\bar{x})+(\cM_i v)(\bar{x})-(\cM_i v)(\bar{y})\\
&\le |(u_i-v_i)^+|_0+[\cM_iv]_1|\bar{x}-\bar{y}|,
\end{align*}
where we  used the definition \eqref{eq:M_impulse} of $\cM_i$.
This implies that
$$
m_{\delta,\gamma}\le |(u_i-v_i)^+|_0+[\cM_iv]_1|\bar{x}-\bar{y}|-\delta|\bar{x}-\bar{y}|^2\le  |(u_i-v_i)^+|_0+[\cM_iv]_1^2/(4\delta).
$$
Then, by passing $\gamma\to 0$, we deduce for any $x,y \in\R^d$ and $\delta>0$ that,
$$
(Q^\rho u)_i(x)-(Q^\rho v)_i(y)\le |(u_i-v_i)^+|_0+[\cM_iv]_1^2/(4\delta)+\delta |x-y|^2,
$$
which, along with the assumption $[\cM_iv]_1\le [v]_1+C$, leads to the desired conclusion by minimizing over $\delta>0$ and then setting $x=y$. 

On the other hand, if $((Q^\rho u)_i-u_j-k_{ij})^+(\bar{x})-((Q^\rho v)_i-v_j-k_{ij})^+(\bar{y})\ge 0$, then the classical results for weakly coupled system  gives us that $(Q^\rho u)_i\le (Q^\rho v)_i$ (see e.g.~\cite{ishii1991monotone}). 
\end{proof}

\begin{proof}[Proof of Lemma \ref{lemma:min}]
Note that for any given $\a>0$, $\mu\in (0,1)$ and $\gamma\in \N$, we have $(\phi^\a)'=\a \gamma x^{\gamma-1}+\mu^x\log\mu$, which is increasing on $(0,\infty)$. 
Suppose that $\a$ is sufficiently small such that $\a\gamma<-\log \mu$, then 
we can show $(\phi^\a)'(n^\a)\ge 0$, with the  natural number $n^\a$ defined as:
$$n^\a\coloneqq\left \lceil{ \f{\log(-\a \gamma/\log(\mu))}{\log \mu}}\right \rceil\le \f{\log(-\a \gamma/\log\mu)}{\log \mu}+1.$$
Consequently, $\phi^\a$ is increasing on $(n^\a,\infty)$, which leads to the estimate that for all small enough $\a$,
$$
m^\a \le \phi^\a(n^\a)\le \a \bigg(\f{\log(-\a \gamma/\log\mu)}{\log \mu}+1\bigg)^\gamma+\mu\f{-\a \gamma}{\log\mu}\le C\a(-\log \a)^\gamma,
$$
where  the constant $C$ depends only on $\gamma$ and $\mu$.
\end{proof}

%
%

\end{document}